\newtheorem{thm}{Theorem}[section]
\newtheorem{lem}[thm]{Lemma}
\newtheorem{prop}[thm]{Proposition}
\theoremstyle{definition}
\theoremstyle{remark}
\newtheorem{rem}{Remark}[section]
\newtheorem{defn}{Definition}
\numberwithin{equation}{section}
\def\d{\mathrm d}
\begin{document}

\title[Fractional wave equations with irregular coefficients and data]{Non-homogeneous problem for the fractional wave equation with irregular coefficients and data}

\author[M. Bouguenna]{Manel Bouguenna}
\address{
  Manel Bouguenna:
  \endgraf
  Mustapha Stambouli University of Mascara, 29000 Mascara
  \endgraf
  Algeria
  \endgraf
  {\it E-mail address} {\rm manelbouguenna2001@gmail.com}
}

\author[M. E. Sebih]{Mohammed Elamine Sebih}
\address{
  Mohammed Elamine Sebih:
  \endgraf
  Laboratory of Geomatics, Ecology and Environment (LGEO2E)
  \endgraf
  Mustapha Stambouli University of Mascara, 29000 Mascara
  \endgraf
  Algeria
  \endgraf
  {\it E-mail address} {\rm sebihmed@gmail.com, ma.sebih@univ-mascara.dz}
}

%\thanks{The authors would like to thank Prof. Niyaz Tokmagambetov for his valuable comments.}

\keywords{Wave equations, Cauchy problem, weak solutions, singular coefficients, regularisation, very weak solution.}
\subjclass[2020]{35L81, 35L05, 	35D30, 35A35.}

\begin{abstract}
In this paper, we consider the Cauchy problem for a non-homogeneous wave equation generated by the fractional Laplacian and involving different kinds of lower order terms. We allow the equation coefficients and data to be of distributional type or less regular, having in mind the Dirac delta function and its powers, and we prove that the problem is well-posed in the sense of the concept of very weak solutions. Moreover, we prove the uniqueness in an appropriate sense and the coherence of the very weak solution concept with classical theory. The results obtained here extend those in \cite{DGL23}. 
\end{abstract}

\maketitle

%%%%%%%%%%%%%%%%%%%%%%%%%%%%%%%%%%%%%%%%%%%%%%%%%%%%%%%%%%%%%%%%%%%%%%%%%%%%%%%%%%%%%%%%%%%%%%%%%%%%%%%%%%%%%%%%%%%%%%%%%%%%%%%%%%%%%%%%%%%%%%%%%%%%%%%%%%%%%%%%%%%%%%%%%%%%%%%%%%%%%%%%%%%%%%%%%%%%

\section{Introduction}
For fixed $T>0$, we consider the Cauchy problem:
\begin{equation}\label{Equation intro}
    \bigg\lbrace
    \begin{array}{l}
    u_{tt}(t,x) + D_{g}^{s}u(t,x) +  m(x)u(t,x) + b(x)u_{t}(t,x)=f(t,x),~~~ (t,x)\in [0,T]\times\mathbb{R}^d \\
    u(0,x)=u_{0}(x),\quad u_{t}(0,x)=u_{1}(x),
    \end{array}
\end{equation}
where $s>0$ and $D_{g}^{s}$ is the differential operator generated by the fractional Laplacian, defined by
\begin{equation*}
     D_{g}^{s}:=(-\Delta)^{\frac{s}{2}}\Big(g(x)(-\Delta)^{\frac{s}{2}}\Big).
\end{equation*}
We assume that:
\begin{itemize}
    \item $g$ is positive,
    \item $m,b$ are non-negative,
    \item $g,m,b,u_0,u_1\in \mathcal{E'}(\mathbb{R}^d)$ (space of distributions with compact support) and for $t\in [0,T]$, $f(t,\cdot)\in \mathcal{E'}(\mathbb{R}^d)$ (i.e. $f$ is smooth enough with respect to $t$ and irregular with respect to $x$).
\end{itemize}
We should highlight here that we understand the first and second assumptions in the sense that the regularisations of $g$ and $m,b$ are positive and non-negative, respectively. Our aim is to prove that the Cauchy problem \eqref{Equation intro} is well posed in the sense of the concept of very weak solutions introduced in \cite{GR15} by Garetto and Ruzhansky in order to give a net solution to the problem of multiplication that Schwartz theory of distributions is concerned with, see \cite{Sch54}, and to provide a framework in which partial differential equations involving coefficients and data of low regularity can be rigorously studied. Let us give a brief literature review about this concept of solutions. After the pioneer work of Garetto and Ruzhansky \cite{GR15}, many researchers started using this notion of solutions for different situations, either for abstract mathematical problems as \cite{CRT21,CRT22a,CRT22b} or for physical models as in \cite{RT17a,RT17b,Gar21} and \cite{MRT19,ART19,ARST21a,ARST21b,ARST21c} where it is shown that the concept of very weak solutions is very suitable for numerical modelling and in \cite{SW22} where the question of propagation of coefficients singularities on the very weak solution is studied. More recently, we cite \cite{GLO21,BLO22,RSY22,RY22,CDRT23,RST24}.

The novelty in this work lies in the fact that we consider equations that can not be formulated in the classical sense. We employ the concept of very weak solutions 
where nonlinear operations make sense for a large collection of singular objects and allows to overcome the problem of the impossibility of multiplication of distributions. Furthermore, the results obtained in this paper extend those of \cite{DGL23} by considering the problem in higher space dimension, whereas in \cite{DGL23} it was treated in the real line. Moreover our approach here is different from the one in \cite{DGL23}.

\section{Preliminaries}
To start with, let us define some notions and specific notations that are used throughout this paper.
\subsection{Notation}
\begin{itemize}
    \item By the notation $f\lesssim g$, we mean that there exists a positive constant $C$, such that $f \leq Cg$ independently on $f$ and $g$.
    \item We also define
    \begin{equation*}
        \Vert u(t,\cdot)\Vert_1 := \Vert u(t,\cdot)\Vert_{L^2} + \Vert (-\Delta)^{\frac{s}{2}}u(t,\cdot)\Vert_{L^2} + \Vert u_t(t,\cdot)\Vert_{L^2},
        \end{equation*}
        and
         \begin{align}
        \Vert u(t,\cdot)\Vert_2 &:= \Vert u(t,\cdot)\Vert_{L^2} + \Vert (-\Delta)^{\frac{s}{2}}u(t,\cdot)\Vert_{L^2} +\Vert (-\Delta)^{s}u(t,\cdot)\Vert_{L^{2}}+ \Vert u_t(t,\cdot)\Vert_{L^2}\nonumber \\ &\quad+\Vert (-\Delta)^{\frac{s}{2}}u_t(t,\cdot)\Vert_{L^2},\nonumber
    \end{align}
      and
         \begin{equation*}
        \Vert u(t,\cdot)\Vert_3 := \Vert u(t,\cdot)\Vert_{L^2} + \Vert (-\Delta)^{\frac{s}{2}}u(t,\cdot)\Vert_{L^2} + \Vert u_t(t,\cdot)\Vert_{L^2} +\Vert (-\Delta)^{\frac{s}{2}}u_t(t,\cdot)\Vert_{L^{2}} ,
    \end{equation*}
     for $t\in (0,T]$.   
\end{itemize}

\subsection{H\"older's inequality}
\begin{prop}\label{Holder inequality}
    Let $r,p,q \geq 1$, such that: $\frac{1}{r}=\frac{1}{p} + \frac{1}{q}$. Assume that $f\in L^{p}(\mathbb{R}^d)$ and $g\in L^{q}(\mathbb{R}^d)$, then, $fg\in L^{r}(\mathbb{R}^d)$ and we have
    \begin{equation}
        \Vert fg\Vert_{L^r}\leq \Vert f\Vert_{L^p}\Vert g\Vert_{L^q}.
    \end{equation}
\end{prop}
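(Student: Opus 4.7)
The plan is to reduce the stated inequality, in which $p$ and $q$ need not be conjugate exponents, to the familiar case $\tfrac{1}{p}+\tfrac{1}{q}=1$ by a rescaling of exponents, and then to prove the conjugate case via Young's pointwise inequality.

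First I would handle the degenerate cases separately: if $\|f\|_{L^p}=0$ or $\|g\|_{L^q}=0$ the inequality is trivial, and if $r=p$ then necessarily $\tfrac{1}{q}=0$, which should be interpreted as $g\in L^\infty$ and the inequality becomes the elementary bound $\|fg\|_{L^p}\le \|f\|_{L^p}\|g\|_{L^\infty}$. So assume $\|f\|_{L^p},\|g\|_{L^q}\in(0,\infty)$ and $p,q<\infty$ with $r<\min(p,q)$.

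Next I would set $P=p/r$ and $Q=q/r$. From $\tfrac{1}{r}=\tfrac{1}{p}+\tfrac{1}{q}$ one obtains
\begin{equation*}
\frac{1}{P}+\frac{1}{Q}=\frac{r}{p}+\frac{r}{q}=1,
\end{equation*}
so $P$ and $Q$ are conjugate exponents in $(1,\infty)$. Applying the classical Hölder inequality to the functions $|f|^r\in L^P(\mathbb{R}^d)$ and $|g|^r\in L^Q(\mathbb{R}^d)$ yields
\begin{equation*}
\int_{\mathbb{R}^d}|fg|^r\,dx=\int_{\mathbb{R}^d}|f|^r|g|^r\,dx\le \bigl\||f|^r\bigr\|_{L^P}\bigl\||g|^r\bigr\|_{L^Q}=\|f\|_{L^p}^{\,r}\|g\|_{L^q}^{\,r},
\end{equation*}
and raising to the power $1/r$ gives the claim. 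In particular $fg\in L^r$ as well.

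The one non-trivial ingredient is the classical Hölder inequality for conjugate exponents, which I would establish via Young's inequality $ab\le \tfrac{a^P}{P}+\tfrac{b^Q}{Q}$ for $a,b\ge 0$. Young's inequality follows from the concavity of $\log$: writing $a^P=u$, $b^Q=v$, one has $\log\bigl(\tfrac{u}{P}+\tfrac{v}{Q}\bigr)\ge \tfrac{1}{P}\log u+\tfrac{1}{Q}\log v=\log(ab)$. Applying this with $a=|f(x)|/\|f\|_{L^P}$ and $b=|g(x)|/\|g\|_{L^Q}$ (after normalising so that both norms are $1$) and integrating gives $\int|fg|\le 1$, which is the conjugate Hölder inequality after undoing the normalisation. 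There is no real obstacle here; the only point requiring care is the bookkeeping of exponents in the rescaling step, which is why I would isolate the classical case first and then deduce the general one.
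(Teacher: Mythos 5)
The paper states this proposition as a standard preliminary without giving any proof, so there is nothing in the paper to compare against. Your argument is the standard one and is correct: you handle the degenerate/endpoint cases, rescale to conjugate exponents $P=p/r$, $Q=q/r$, apply the conjugate Hölder inequality to $|f|^r$ and $|g|^r$ using $\||f|^r\|_{L^P}=\|f\|_{L^p}^r$, and base the conjugate case on Young's inequality from concavity of the logarithm.
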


\subsection{The fractional Sobolev space $H^s$ and the fractional Laplacian}
\begin{defn}[\textbf{Fractional Sobolev space}]\label{Def. frac. sobolev space}
    Given $s>0$, the fractional Sobolev space is defined by
    \begin{equation*}
        H^s(\mathbb{R}^d) = \big\{f\in L^2(\mathbb{R}^d) : \int_{\mathbb{R}^d}(1+\vert \xi\vert^{2s})|\hat{f}(\xi)|^2\d \xi < +\infty\big\},
    \end{equation*}
    where $\hat{f}$ denotes the Fourier transform of $f$.
\end{defn}
We note that the fractional Sobolev space $H^s$ endowed with the norm
    \begin{equation}\label{Norm H^s}
        \Vert f\Vert_{H^s}:=\bigg(\int_{\mathbb{R}^d}(1+\vert \xi\vert^{2s})|\hat{f}(\xi)|^2\d \xi\bigg)^{\frac{1}{2}},\quad\text{for } f\in H^s(\mathbb{R}^d),
    \end{equation}
    is a Hilbert space.
\begin{defn}[\textbf{Fractional Laplacian}]\label{Def. fract. laplacian}
    For $s>0$, $(-\Delta)^s$ denotes the fractional Laplacian defined by
    \begin{equation*}
    (-\Delta)^{s}f(x) = \mathcal{F}^{-1}(\vert\xi\vert^{2s}(\hat{f}))(x),
    \end{equation*}
    for all $x\in \mathbb{R}^d$.
\end{defn}
In other words, the fractional Laplacian $(-\Delta)^s$ can be viewed as the pseudo-differential operator with the symbol $\vert \xi\vert^{2s}$. With this definition and Plancherel Theorem, the fractional Sobolev space can be defined as:
    \begin{equation}
        H^{s}(\mathbb{R}^{d})=\big\{ f\in L^{2}(\mathbb{R}^{d}): (-\Delta)^{\frac{s}{2}}f \in L^{2}(\mathbb{R}^{d})\big\},
    \end{equation}
    moreover, the norm
    \begin{equation}
        \Vert f\Vert_{H^{s}}:=\Vert f\Vert_{L^2}+\Vert (-\Delta)^{\frac{s}{2}}f\Vert_{L^2},
    \end{equation}
    is equivalent to the one defined in \eqref{Norm H^s}.

\begin{rem}
    We note that the fractional Sobolev space $H^s(\mathbb{R}^d)$ can also be defined via the Gagliardo norm, however, we choosed this approach, since it is valid for any real $s>0$, unlike the one via Gagliardo norm which is valid only for $s\in (0,1)$. We refer the reader to \cite{DPV12,Gar18,Kwa17} for more details and alternative definitions.
\end{rem}
In the following proposition we prove some useful properties of the fractional Laplacian.

\begin{prop}\label{Frac. Lapl. properties}
Let $s>0$ and $f,g\in H^{2s} (\mathbb R^d)$, we have
\begin{itemize}
    \item[(1)] $(-\Delta)^s(f+g)=(-\Delta)^sf+(-\Delta)^sg$.
    \item[(2)] $\langle (-\Delta)^s f,g\rangle_{L^2}=\langle f,(-\Delta)^s g \rangle_{L^2}$.
    \item[(3)] For $s_1,s_2>0$ such that $s_1+s_2=s$, we have $$(-\Delta)^{s_1}\big( (-\Delta)^{s_2} f\big)=(-\Delta)^{s_1+s_2}f.$$
\end{itemize}
\end{prop}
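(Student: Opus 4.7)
All three properties are direct consequences of the spectral definition $(-\Delta)^s f = \mathcal{F}^{-1}(|\xi|^{2s}\hat{f})$, so my plan is to reduce each statement to an elementary manipulation of Fourier multipliers. The membership $f,g\in H^{2s}(\R^d)$ guarantees that $|\xi|^{2s}\hat f$ and $|\xi|^{2s}\hat g$ both lie in $L^2(\R^d)$, so all applications of the Fourier transform and its inverse are justified on $L^2$, and Plancherel's identity is available throughout.

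For item (1), I would write $\widehat{f+g}=\hat f+\hat g$ by linearity of $\mathcal F$, multiply by $|\xi|^{2s}$ and distribute, then invoke linearity of $\mathcal F^{-1}$. For item (3), I would compute
\begin{equation*}
(-\Delta)^{s_1}\big((-\Delta)^{s_2}f\big)=\mathcal F^{-1}\!\left(|\xi|^{2s_1}\,\mathcal F\big((-\Delta)^{s_2}f\big)\right)=\mathcal F^{-1}\!\left(|\xi|^{2s_1}|\xi|^{2s_2}\hat f\right)=(-\Delta)^{s_1+s_2}f,
\end{equation*}
using $\mathcal F\mathcal F^{-1}=\mathrm{Id}$ on $L^2$; here I should note that $f\in H^{2s}$ with $s=s_1+s_2$ ensures $|\xi|^{2s_2}\hat f\in H^{2s_1}$ in the sense needed, so $(-\Delta)^{s_2}f\in H^{2s_1}$ and the outer application of $(-\Delta)^{s_1}$ is legitimate.

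For item (2), I would apply Plancherel twice:
\begin{equation*}
\langle (-\Delta)^s f,g\rangle_{L^2}=\langle \mathcal F\big((-\Delta)^s f\big),\hat g\rangle_{L^2}=\langle |\xi|^{2s}\hat f,\hat g\rangle_{L^2},
\end{equation*}
and since $|\xi|^{2s}$ is real and non-negative, it can be moved onto the second factor, yielding $\langle \hat f,|\xi|^{2s}\hat g\rangle_{L^2}=\langle f,(-\Delta)^s g\rangle_{L^2}$ by a second use of Plancherel.

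There is no substantive obstacle: the only thing to be careful about is the integrability needed to invoke Plancherel's theorem and to interpret the pointwise product $|\xi|^{2s}\hat f$ in $L^2$, and both are ensured by the hypothesis $f,g\in H^{2s}(\R^d)$, which by Definition \ref{Def. frac. sobolev space} says exactly that $(1+|\xi|^{2s})^{1/2}\hat f\in L^2$ (in particular $|\xi|^{2s}\hat f\in L^2$, since $|\xi|^{2s}\le 1+|\xi|^{4s}$ yields the required bound after pairing against $\hat f$, which already has the needed decay). Thus the argument is essentially a one-line Fourier calculation for each item.
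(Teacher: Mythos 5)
Your proofs of (1) and (2) match the paper's exactly: linearity of $\mathcal F$ for (1), and Plancherel plus moving the real symbol $|\xi|^{2s}$ across the inner product for (2). For (3), though, you take a genuinely different route. You compose the operators directly on the Fourier side, using $\mathcal F\mathcal F^{-1}=\mathrm{Id}$ on $L^2$ to collapse $|\xi|^{2s_1}\mathcal F\big((-\Delta)^{s_2}f\big)$ into $|\xi|^{2s_1}|\xi|^{2s_2}\hat f$, and you explicitly check that $(-\Delta)^{s_2}f\in H^{2s_1}$ so the outer operator applies. The paper instead runs a duality argument: it shows $\langle(-\Delta)^{s_1+s_2}f,g\rangle_{L^2}=\langle(-\Delta)^{s_2}\big((-\Delta)^{s_1}f\big),g\rangle_{L^2}$ for all $g$, invoking the self-adjointness established in (2), and then concludes equality of the operators. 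Your version is more direct, independent of (2), and has the virtue of surfacing the domain issue, which the paper glosses over. The paper's version avoids composing unbounded operators and instead compares them weakly. Both are correct. One small slip in your closing remark: for $f\in H^{2s}$, the defining condition (replacing $s$ by $2s$ in Definition \ref{Def. frac. sobolev space}) is $(1+|\xi|^{4s})^{1/2}\hat f\in L^2$, not $(1+|\xi|^{2s})^{1/2}\hat f\in L^2$ as you wrote; your subsequent bound $|\xi|^{2s}\le 1+|\xi|^{4s}$ then delivers $|\xi|^{2s}\hat f\in L^2$ correctly, so the deduction stands once the characterization is stated with the right exponent.
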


\begin{proof}
    \begin{itemize}
        \item[(1)] The first property follows from the linearity of the Fourier transform. Indeed, we have
        \begin{align}
            \widehat{(-\Delta^s)(f+g)(\cdot)}(\xi)&=\vert \xi \vert^{2s}(\widehat{f(\cdot)+g(\cdot)})(\xi)\nonumber \\ &=\vert \xi \vert^{2s}\hat{f}(\xi)+\vert \xi \vert^{2s}\hat{g}(\xi)\nonumber \\ &=\widehat{(-\Delta^s)(f)(\cdot)}(\xi)+\widehat{(-\Delta^s)(g)(\cdot)}(\xi).\nonumber
        \end{align}
        \item[(2)] By using Plancherel-Parseval identity, we have
        \begin{align}
            \langle (-\Delta)^s f( \cdot),g(\cdot)\rangle_{L^2}&= \langle \vert \xi \vert^{2s} \hat{f}(\xi ),\hat{g}(\xi) \rangle_{L^2}\nonumber \\ 
            &=\int_{\mathbb R^d} \vert \xi \vert^{2s}\hat{f}(\xi )\hat{g}(\xi )d\xi \nonumber \\ &=\int_{\mathbb R^d} \hat{f}(\xi )\vert \xi \vert^{2s}\hat{g}(\xi )d\xi \nonumber \\ &=\langle \hat{f}(\xi),\vert \xi \vert^{2s}\hat{g}(\xi)\rangle_{L^2}\nonumber \\ &=\langle f(\cdot),(-\Delta)^s g(\cdot)\rangle_{L^2}\nonumber. 
        \end{align}
        \item[(3)] Again, by Plancherel-Parseval identity and the self-adjointness of the fractional Laplacian (property (2)), we have 
        \begin{align}
            \langle (-\Delta)^{s_1+s_2}f(\cdot),g(\cdot)\rangle_{L^2}&=\langle \vert \xi \vert^{2(s_1+s_2)}\hat{f}(\xi),\hat{g}(\xi)\rangle_{L^2}\nonumber \\ &=\int_{\mathbb R^d} \vert \xi \vert^{2s_1+2s_2} \hat{f}(\xi )\hat{g}(\xi)d\xi \nonumber \\ &=\int_{\mathbb R^d} \vert \xi \vert^{2s_1}\hat{f}(\xi) \vert \xi \vert^{2s_2}\hat{g}(\xi)d\xi \nonumber \\ &=\langle\vert \xi \vert^{2s_1}\hat{f}(\xi),\vert \xi \vert^{2s_2}\hat{g}(\xi)\rangle_{L^2}\nonumber \\ &= \langle (-\Delta)^{s_1}f(\cdot),(-\Delta)^{s_2}g(\cdot)\rangle_{L^2}\nonumber \\ &=\langle (-\Delta)^{s_2}\big( (-\Delta)^{s_1}f(\cdot)\big),g(\cdot)\rangle_{L^2}.\nonumber
        \end{align}
        Thus: $(-\Delta)^{s_2}\big( (-\Delta)^{s_1}f\big)=(-\Delta)^{s_1+s_2}f$.
    \end{itemize}
\end{proof}
The following inequality will be used frequently in this paper. In the literature, it is known as the fractional Sobolev inequality.

\begin{prop}[\textbf{Fractional Sobolev inequality}, Theorem 1.1. \cite{CT04}]\label{Prop. Sobolev estimate}
    For $d\in \mathbb{N}$ and $s\in \mathbb{R}_+$, let $d>2s$ and $q=\frac{2d}{d-2s}$. Then, the estimate
    \begin{equation}\label{Sobolev estimate}
        \Vert f\Vert_{L^q}^2 \leq C(d,s)\Vert (-\Delta)^{\frac{s}{2}}f\Vert_{L^2}^2,
    \end{equation}
    holds for all $f\in H^{s}(\mathbb{R}^d)$, where the constant $C$ depends only on the dimension $d$ and the order $s$.
\end{prop}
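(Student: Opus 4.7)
The plan is to reduce the fractional Sobolev inequality to the Hardy--Littlewood--Sobolev (HLS) inequality via the Riesz potential representation. Given $f\in H^{s}(\mathbb{R}^d)$, I would set $g:=(-\Delta)^{s/2}f\in L^{2}(\mathbb{R}^d)$. Using Definition \ref{Def. fract. laplacian} together with the standard distributional Fourier identity
\begin{equation*}
\widehat{|\cdot|^{-(d-s)}}(\xi)=\gamma(d,s)\,|\xi|^{-s},\qquad 0<s<d,
\end{equation*}
one verifies that $f$ is recovered from $g$ via the Riesz potential
\begin{equation*}
f(x)=I_{s}g(x)=c_{d,s}\int_{\mathbb{R}^d}\frac{g(y)}{|x-y|^{d-s}}\,\d y,
\end{equation*}
for an explicit dimensional constant $c_{d,s}$. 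The condition $d>2s$ in the hypothesis is exactly what allows $s<d$ and makes the Riesz kernel locally integrable enough for this formula to make sense.

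Next, I would invoke the HLS inequality: for $1<p<q<\infty$ with $\tfrac{1}{q}=\tfrac{1}{p}-\tfrac{s}{d}$, the map $g\mapsto I_{s}g$ is bounded from $L^{p}(\mathbb{R}^d)$ to $L^{q}(\mathbb{R}^d)$. The hypothesis $d>2s$ makes the choice $p=2$ admissible and produces precisely $q=\tfrac{2d}{d-2s}\in(2,\infty)$. Chaining the two gives
\begin{equation*}
\Vert f\Vert_{L^{q}}=\Vert I_{s}g\Vert_{L^{q}}\leq C(d,s)\,\Vert g\Vert_{L^{2}}=C(d,s)\,\Vert (-\Delta)^{s/2}f\Vert_{L^{2}},
\end{equation*}
and squaring yields \eqref{Sobolev estimate}.

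The main obstacle is of course HLS itself, whose proof is not a short computation. The classical route is via symmetric decreasing rearrangement, which reduces the problem to radial functions, followed by a layer--cake decomposition combined with Young's convolution inequality (Proposition \ref{Holder inequality} applied dyadically); Lieb's conformal argument moreover identifies the sharp constant, though this is not needed here since only some $C(d,s)$ is claimed. A more elementary variant goes through a Littlewood--Paley decomposition $f=\sum_{k}\Delta_{k}f$ combined with Bernstein's inequality $\Vert \Delta_{k}f\Vert_{L^{q}}\lesssim 2^{ks}\Vert \Delta_{k}f\Vert_{L^{2}}$, summed in $\ell^{2}$ via the square--function characterization of $L^{q}$ for $1<q<\infty$. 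Since the statement is quoted verbatim as Theorem 1.1 of \cite{CT04}, the cleanest course is to take HLS as a black box and record the two--line reduction above.
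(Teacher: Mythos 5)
The paper does not prove this proposition at all: it is quoted verbatim as Theorem~1.1 of \cite{CT04} and used as a black box, so there is no in-paper proof to compare against. Your reduction is nonetheless correct, and it is in fact the route that Cotsiolis--Tavoularis themselves take. Writing $g=(-\Delta)^{s/2}f$ and recovering $f=I_{s}g$ through the Riesz potential, then feeding $p=2$, $\alpha=s$ into Hardy--Littlewood--Sobolev, does produce exactly $q=\tfrac{2d}{d-2s}$, and the hypothesis $d>2s$ is precisely what makes $1<p<q<\infty$ admissible; the cited reference merely goes further and runs Lieb's sharp HLS constant through the same reduction to obtain the optimal $C(d,s)$, which, as you note, is more than the statement here requires. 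One technical point is worth making explicit if you were to write this out in full: the pointwise Riesz representation $f(x)=c_{d,s}\int_{\mathbb{R}^d}|x-y|^{-(d-s)}g(y)\,\d y$ should first be verified for $f\in\mathcal{S}(\mathbb{R}^d)$ (where both sides make classical sense and the Fourier identity $\widehat{|\cdot|^{-(d-s)}}=\gamma(d,s)|\cdot|^{-s}$ applies directly) and then extended to all of $H^{s}(\mathbb{R}^d)$ by density. For arbitrary $g\in L^2$ the convolution against the Riesz kernel does not converge absolutely, since $|x-y|^{-(d-s)}$ fails to be integrable at infinity; it is the HLS bound itself that supplies the continuous extension $I_s\colon L^{2}\to L^{q}$, so the order of quantifiers matters. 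This is a routine density argument and not a genuine gap, but it is the one place where the two-line reduction hides some work beyond the HLS black box.
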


\subsection{Fractional Leibniz rule and the Kato-Ponce inequality} 
For the classical Laplacian, we have the following known identity: 
\begin{equation*}
    \Delta (uv)=\Delta u\,\, v+2\Delta u\,\, \Delta v+u\, \Delta v.
\end{equation*}
For the fractional Laplacian with $s\in(0,1)$, we have an analogous identity: 
\begin{equation}
    (-\Delta)^{s}(uv)=u(-\Delta)^{s}v+v(-\Delta)^{s}u-I_s(u,v),
\end{equation}
where the remainder term is given by:
\begin{equation*}
    I_s(u,v)(x):= C_{d,s}\int_{\mathbb{R}^d}\frac{(u(x)-u(y))(v(x)-v(y))}{\vert x-y\vert ^{d+2s}}dy,\, \, x\in \mathbb{R}^d,
\end{equation*}
and
\begin{equation*}
    \int_{\mathbb{R}^d}\frac{\vert (u(x)-u(y))(v(x)-v(y))\vert }{\vert x-y\vert ^{d+2s}}dy<\infty.
\end{equation*}
\begin{prop}[\textbf{The Kato-Ponce inequality}, Theorem 1. \cite{GO14}]
\label{The Kato-Ponce Inequality}
   Let $\frac{1}{2}<r<\infty$ and $1<p_1,p_2,q_1,q_2\leq \infty$ satisfying $\frac{1}{r}=\frac{1}{p_1}+\frac{1}{q_1}=\frac{1}{p_2}+\frac{1}{q_2}$. Given $s>max(0,\frac{d}{r}-d)$ or $s\in 2\mathbb{N}$, there exists $C=C(d,s,r,p_1,q_1,p_2,q_2)<\infty$, such  that for all $f,h\in \mathcal{S}(\mathbb{R}^d)$ we have
   \begin{equation}\label{The Kato-Ponce Inequality 1}
       \Vert (-\Delta)^{\frac{s}{2}}(fh)\Vert_{{L^r}(\mathbb{R}^d)}\leq C\big[\Vert (-\Delta)^{\frac{s}{2}} f\Vert_{L^{p_1}(\mathbb{R}^d)}\Vert h\Vert_{L^{q_1}(\mathbb{R}^d)}+\Vert f\Vert_{L^{p_2}(\mathbb{R}^d)}\Vert (-\Delta)^{\frac{s}{2}} h\Vert_{L^{q_2}(\mathbb{R}^d)}\big].
   \end{equation}
   \end{prop}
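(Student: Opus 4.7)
The plan is to prove the Kato–Ponce inequality via a Littlewood–Paley and paraproduct argument, which is the standard modern approach and the one on which Grafakos–Oh's refinement is based. Fix a smooth radial bump $\varphi$ supported in an annulus so that $\sum_{j\in\mathbb Z}\varphi(2^{-j}\xi)=1$ for $\xi\neq 0$, and let $\Delta_j$ and $S_j=\sum_{k\le j-1}\Delta_k$ be the corresponding Littlewood–Paley projections. I would first recall the Bernstein-type identity $(-\Delta)^{s/2}\Delta_j u \sim 2^{js}\Delta_j u$ (in the sense that it has a Fourier multiplier supported in $|\xi|\sim 2^j$ with symbol bounds $2^{js}$), together with the square-function characterization $\|u\|_{L^p}\sim \|(\sum_j|\Delta_j u|^2)^{1/2}\|_{L^p}$ for $1<p<\infty$, and the Fefferman–Stein vector-valued maximal inequality.

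Next I would invoke Bony's paraproduct decomposition
\begin{equation*}
fh = \pi_f(h) + \pi_h(f) + R(f,h),\qquad \pi_f(h):=\sum_j S_{j-3}f\,\Delta_j h,\qquad R(f,h):=\sum_{|j-k|\le 2}\Delta_j f\,\Delta_k h,
\end{equation*}
and estimate $(-\Delta)^{s/2}$ applied to each of the three terms separately. For the paraproduct $\pi_f(h)$, the Fourier support of $S_{j-3}f\,\Delta_j h$ lives in an annulus of scale $2^j$, so $(-\Delta)^{s/2}\pi_f(h)\sim \sum_j 2^{js}S_{j-3}f\,\Delta_j h$; the square-function estimate together with the pointwise bound $|S_{j-3}f|\lesssim Mf$ then yields
\begin{equation*}
\|(-\Delta)^{s/2}\pi_f(h)\|_{L^r}\lesssim \|Mf\|_{L^{p_2}}\,\bigl\|\bigl(\sum_j 2^{2js}|\Delta_j h|^2\bigr)^{1/2}\bigr\|_{L^{q_2}}\lesssim \|f\|_{L^{p_2}}\|(-\Delta)^{s/2}h\|_{L^{q_2}},
\end{equation*}
using Hölder with exponent $\tfrac{1}{r}=\tfrac{1}{p_2}+\tfrac{1}{q_2}$ and the boundedness of the Hardy–Littlewood maximal operator. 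The symmetric term $\pi_h(f)$ produces the other piece on the right-hand side. For the resonant remainder $R(f,h)$, the summands have Fourier support in a ball of radius $\sim 2^j$ rather than an annulus, so one needs $s>0$ (or a related condition) to sum a geometric series after applying Bernstein; this is where the restriction $s>\max(0,d/r-d)$ enters.

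The main obstacle is the endpoint behaviour: when some $p_i$ or $q_i$ equals $\infty$ one must replace the square-function/maximal-function tools above by BMO substitutes and use commutator estimates, and when $r\le 1$ (the Hardy-space regime left open by Kato–Ponce) one must work with atomic decompositions and tent spaces; handling these endpoints is precisely the technical contribution of Grafakos–Oh. In the range we actually need, however, all exponents are strictly between $1$ and $\infty$, so the paraproduct argument above gives the inequality without entering the delicate endpoint theory, and I would conclude by citing \cite{GO14} for the borderline cases that are included in the statement but not needed in the applications below.
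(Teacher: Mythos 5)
The paper does not prove this proposition at all: it is stated as a quoted background result, attributed explicitly to Theorem 1 of Grafakos--Oh \cite{GO14} in the very title of the proposition, and used later as a black box (in estimate \eqref{the kato-ponce inequality}). So there is no ``paper's own proof'' to compare your argument against---the intended justification is simply the citation.

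Your sketch is a reasonable outline of the standard Littlewood--Paley/paraproduct approach: decompose $fh$ via Bony's paraproduct, observe that $(-\Delta)^{s/2}$ acts like $2^{js}$ on the frequency-localized pieces $S_{j-3}f\,\Delta_j h$, control the low-frequency factor by the maximal function, apply the square-function characterization and H\"older, and treat the resonant term $R(f,h)$ separately using $s>0$ to sum the geometric series. That is indeed the skeleton of the proof for $1<r<\infty$ with all exponents finite. However it is a proof \emph{outline}, not a proof: the almost-orthogonality lemma justifying $\|(-\Delta)^{s/2}\pi_f(h)\|_{L^r}\lesssim\|(\sum_j 2^{2js}|S_{j-3}f\,\Delta_j h|^2)^{1/2}\|_{L^r}$ is asserted rather than proved, the resonant term is waved at rather than estimated, the branch $s\in 2\mathbb{N}$ of the hypothesis (relevant when $r\le 1$) is not addressed, and for the endpoint exponents you explicitly defer to \cite{GO14}. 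Since in the end you also lean on \cite{GO14} to close the argument, the net effect is the same as what the paper does---cite the reference---just with more exposition in between. For the purposes of this paper that is fine; the relevant application of the Kato--Ponce inequality is in the range $1<r,p_i,q_i<\infty$ anyway, and citing \cite{GO14} is the clean and correct choice.
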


\subsection{Duhamel's principle}
Duhamel's principle is the main tool in proving existence and uniqueness of very weak solutions to our considered problem as well as the coherence with classical solutions. For convenience of the reader, we prove the following special version of this principle and refer the reader to \cite{ER18} for more details and applications.

Let us consider the following Cauchy problem
\begin{equation}
    \left\lbrace
    \begin{array}{l}
    u_{tt}(t,x)+\lambda(x)u_{t}(t,x)+Lu(t,x)=f(t,x) ,~~~(t,x)\in\left(0,\infty\right[\times \mathbb{R}^{d},\\
    u(0,x)=u_{0}(x),\\
    u_{t}(0,x)=u_{1}(x), \,\,\, x\in\mathbb{R}^{d}, \label{Equation Duhamel 1}
    \end{array}
    \right.
\end{equation}
for a given function $\lambda$ and $L$ is a linear partial differential operator acting over the spatial variable.

\begin{thm}\label{Thm Duhamel}
The solution to (\ref{Equation Duhamel 1}) is given by
\begin{equation}\label{Sol Duhamel}
    u(t,x)= w(t,x) + \int_0^t v(t,x;\tau)\d \tau,
\end{equation}
where $w(t,x)$ is the solution to the homogeneous problem
\begin{equation}\label{Homog eqn Duhamel}
    \left\lbrace
    \begin{array}{l}
    w_{tt}(t,x)+\lambda(x)w_{t}(t,x)+Lw(t,x)=0 ,~~~(t,x)\in\left(0,\infty\right)\times \mathbb{R}^{d},\\
    w(0,x)=u_{0}(x),\,\,\, w_{t}(0,x)=u_{1}(x),\,\,\, x\in\mathbb{R}^{d},
    \end{array}
    \right.
\end{equation}
and $v(t,x;\tau)$ solves the auxiliary Cauchy problem
\begin{equation}\label{Aux eqn Duhamel}
    \left\lbrace
    \begin{array}{l}
    v_{tt}(t,x;\tau)+\lambda(x)v_{t}(t,x;\tau)+Lv(t,x;\tau)=0 ,~~~(t,x)\in\left(\tau,\infty\right)\times \mathbb{R}^{d},\\
    v(\tau,x;\tau)=0,\,\,\, v_{t}(\tau,x;\tau)=f(\tau,x),\,\,\, x\in\mathbb{R}^{d},
    \end{array}
    \right.
\end{equation}
where $\tau$ is a parameter varying over $\left(0,\infty\right)$.
\end{thm}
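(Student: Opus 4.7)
The plan is to verify directly that the function $u(t,x) = w(t,x) + \int_0^t v(t,x;\tau)\,\d\tau$ defined by \eqref{Sol Duhamel} satisfies the Cauchy problem \eqref{Equation Duhamel 1}. The core tool is Leibniz's rule for differentiation under the integral sign, combined with the specific initial conditions built into the auxiliary problem \eqref{Aux eqn Duhamel}.

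First I would check the two initial conditions at $t=0$. Since the integral from $0$ to $0$ vanishes, $u(0,x) = w(0,x) = u_0(x)$ is immediate from \eqref{Homog eqn Duhamel}. For the derivative, I differentiate once in $t$: by the Leibniz rule,
\begin{equation*}
u_t(t,x) = w_t(t,x) + v(t,x;t) + \int_0^t v_t(t,x;\tau)\,\d\tau.
\end{equation*}
The boundary term $v(t,x;t)$ vanishes thanks to the first initial condition in \eqref{Aux eqn Duhamel}, so $u_t(t,x) = w_t(t,x) + \int_0^t v_t(t,x;\tau)\,\d\tau$, and evaluating at $t=0$ yields $u_t(0,x) = w_t(0,x) = u_1(x)$.

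Next I would compute $u_{tt}$ by differentiating the previous identity once more in $t$:
\begin{equation*}
u_{tt}(t,x) = w_{tt}(t,x) + v_t(t,x;t) + \int_0^t v_{tt}(t,x;\tau)\,\d\tau.
\end{equation*}
Now the crucial point: the boundary term is $v_t(t,x;t)$, which by the second initial condition in \eqref{Aux eqn Duhamel} equals exactly $f(t,x)$. This is the step that produces the forcing term and is the heart of the principle. Using linearity of $L$ (and interchanging $L$ with the $\tau$-integral), I then assemble
\begin{equation*}
u_{tt} + \lambda(x) u_t + L u = \bigl[w_{tt} + \lambda w_t + Lw\bigr] + f(t,x) + \int_0^t \bigl[v_{tt} + \lambda v_t + L v\bigr](t,x;\tau)\,\d\tau.
\end{equation*}
The first bracket vanishes by \eqref{Homog eqn Duhamel}, the integrand in the last bracket vanishes by \eqref{Aux eqn Duhamel}, and what remains is precisely $f(t,x)$, confirming that $u$ solves \eqref{Equation Duhamel 1}.

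The only genuine subtlety, which I would flag but not dwell on, is the justification for commuting $L$ and $\partial_t$ with the $\tau$-integral; this is standard under sufficient regularity of $v(\cdot,\cdot;\tau)$ in both variables jointly with $\tau$, and in the very-weak-solution framework of this paper we will always apply the formula to regularised nets where this interchange is automatic. Everything else is a clean application of Leibniz's rule, so the proof is essentially a two-line computation once the boundary terms are correctly identified.
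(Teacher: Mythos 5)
Your proposal is correct and follows essentially the same route as the paper: differentiate under the integral via Leibniz's rule, use the boundary values $v(t,x;t)=0$ and $v_t(t,x;t)=f(t,x)$ to control the boundary terms, and assemble using the homogeneous equations for $w$ and $v$. The only differences are cosmetic (you verify the initial conditions first, the paper does so last) and that you explicitly flag the regularity needed to commute $L$ and $\partial_t$ with the $\tau$-integral, which the paper leaves implicit.
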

\begin{proof}
    Firstly, we apply $\partial_{t}$ to $u$ in \eqref{Sol Duhamel}. We get
    \begin{equation}\label{sol Duhamel u_t}
        \partial_t u(t,x)=\partial_t w(t,x) + \int_0^t \partial_t v(t,x;\tau)\d \tau,
    \end{equation}
    and accordingly
    \begin{equation}\label{sol Duhamel lambdau_t}
        \lambda(x)\partial_t u(t,x)=\lambda(x)\partial_t w(t,x) + \int_0^t \lambda(x)\partial_t v(t,x;\tau)\d \tau,
    \end{equation}
    where we used the fact that $v(t,x;t)=0$ by the imposed initial condition in \eqref{Aux eqn Duhamel}. We differentiate again \eqref{sol Duhamel u_t} with respect to $t$ to get
    \begin{equation}\label{sol Duhamel u_tt}
        \partial_{tt} u(t,x)=\partial_{tt} w(t,x) + f(t,x) + \int_0^t \partial_{tt} v(t,x;\tau)\d \tau,
    \end{equation}
    where we used that $\partial_t v(t,x;t) = f(t,x)$. Now, applying $L$ to $u$ in \eqref{Sol Duhamel} gives
    \begin{equation}\label{sol Duhamel L}
        L u(t,x)=L w(t,x) + \int_0^t L v(t,x;\tau)\d \tau.
    \end{equation}
    By adding \eqref{sol Duhamel u_tt}, \eqref{sol Duhamel L} and \eqref{sol Duhamel lambdau_t} with in mind that $w$ and $v$ satisfy the equations in \eqref{Homog eqn Duhamel} and \eqref{Aux eqn Duhamel}, we get
    \begin{equation*}
        u_{tt}(t,x)+\lambda(x)u_{t}(t,x)+Lu(t,x)=f(t,x).
    \end{equation*}
    It remains to prove that $u$ satisfies the initial conditions. Indeed, from \eqref{Sol Duhamel} and \eqref{sol Duhamel u_t}, we have that $u(0,x)=w(0,x)=u_0(x)$ and that $u_t(0,x)=\partial_t w(0,x)=u_1(x)$. This concludes the proof.    
\end{proof}

\begin{rem}
    We note that the above statement of Duhamel's principle can be extended to differential operators of order $k\in \mathbb{N}$. Indeed, if we consider the Cauchy problem
    \begin{equation*}\label{Equation Duhamel general}
        \left\lbrace
        \begin{array}{l}        \partial_{t}^{k}u(t,x)+\sum_{j=1}^{k-1}\lambda_{j}(x)\partial_{t}^{j} u(t,x)+Lu(t,x)=f(t,x) ,~~~(t,x)\in\left(0,\infty\right)\times \mathbb{R}^{d},\\
        \partial_{t}^{j}u(0,x)=u_{j}(x),\,\text{for}\quad j=0,\cdots,k-1, \quad x\in\mathbb{R}^{d},
        \end{array}
        \right.
    \end{equation*}
    then, the solution is given by
\begin{equation*}\label{Sol Duhamel general}
    u(t,x)= w(t,x) + \int_0^t v(t,\tau;\tau)\d \tau,
\end{equation*}
where $w(t,x)$ is the solution to the homogeneous problem
\begin{equation*}\label{Homog eqn Duhamel general}
    \left\lbrace
    \begin{array}{l}
    \partial_{t}^{k}w(t,x)+\sum_{j=1}^{k-1}\lambda_{j}(x)\partial_{t}^{j}w(t,x)+Lw(t,x)=0 ,~~~(t,x)\in\left(0,\infty\right)\times \mathbb{R}^{d},\\
    \partial_{t}^{j}w(0,x)=u_{j}(x),\,\text{for}\quad j=0,\cdots,k-1, \quad x\in\mathbb{R}^{d},
    \end{array}
    \right.
\end{equation*}
and $v(t,x;\tau)$ solves the auxiliary Cauchy problem
\begin{equation*}\label{Aux eqn Duhamel general}
    \left\lbrace
    \begin{array}{l}
    \partial_{t}^{k}v(t,x;\tau)+\sum_{j=1}^{k-1}\lambda_{j}(x)\partial_{t}^{j}v_{t}(t,x;\tau)+Lv(t,x;\tau)=0 ,~~~(t,x)\in\left(\tau,\infty\right)\times \mathbb{R}^{d},\\
    \partial_{t}^{j}w(\tau,x;\tau)=0,\,\text{for}\quad j=0,\cdots,k-2, \,\,\, \partial_{t}^{k-1}w(\tau,x;\tau)=f(\tau,x),\,\,\, x\in\mathbb{R}^{d},
    \end{array}
    \right.
\end{equation*}
where $\tau\in \left(0,\infty\right)$.
\end{rem}

\section{Energy estimates for the classical solution}
In this section, we prove energy estimates to the Cauchy problem \eqref{Equation intro} in the case when the equation coefficients and the initial data are regular enough in such a way that a classical solution exists. We prove estimates for different classes of regularity of the coefficients $g,m,b,f$ and the initial data $u_0$ and $u_1$. These estimates are a key element when proving the existence and uniqueness of a very weak solution to the Cauchy problem \eqref{Equation intro} as well as the coherence with classical solutions.

\subsection{Auxiliary result}
Before proving energy estimates for the classical solution to \eqref{Equation intro}, the following auxiliary result for the Cauchy problem
\begin{equation}\label{Equation aux}
    \bigg\lbrace
    \begin{array}{l}
    u_{tt}(t,x) + D_{g}^{s}u(t,x) = 0,\quad (t,x)\in [0,T]\times\mathbb{R}^d,\\
    u(0,x)=u_{0}(x),\quad u_{t}(0,x)=u_{1}(x),\quad x\in \mathbb{R}^d,
    \end{array}
\end{equation}
needs to be proved.

\begin{lem}\label{Aux. result}
    Let $g\in L^{\infty}(\mathbb{R}^d)$ be positive and assume that $u_0\in H^{s}(\mathbb R^d)$ and $u_1\in L^2(\mathbb R^d)$. Then, there is a unique solution $u\in C([0,T];H^{s}(\mathbb{R}^d))\cap C^1([0,T];L^{2}(\mathbb{R}^d))$ to the Cauchy problem \eqref{Equation aux} that satisfies the estimate
    \begin{equation}\label{Energy aux}
        \Vert u(t,\cdot)\Vert_1\lesssim \Big(1 + \Vert g\Vert_{L^{\infty}}^{\frac{1}{2}}\Big) \Big[\Vert u_0\Vert_{H^s} + \Vert u_{1}\Vert_{L^2}\Big],
    \end{equation}
    uniformly in $t\in [0,T]$.

Moreover, let $d>2s$, then for $g\in L^{\frac{d}{2s}}(\mathbb{R}^d)$ and $u_0\in H^{2s}(\mathbb R^d)$, $u_1\in L^2(\mathbb R^d)$, there is a unique solution $u\in C\big([0,T]; H^{s}(\mathbb{R}^d)\cap C^{1}\big([0,T]; L^{2}(\mathbb{R}^d)\big)$ to \eqref{Equation aux} that satisfies the estimate
    \begin{equation}\label{Energy1 aux}
        \Vert u(t,\cdot)\Vert_{1} \lesssim \Big(1+\Vert g\Vert_{L^{\frac{d}{2s}}}^{\frac{1}{2}}\Big)\Big[\Vert u_{0}\Vert_{H^{2s}}+\Vert u_{1}\Vert_{L^2}\Big],
    \end{equation}
     uniformly in $t\in [0,T]$.
\end{lem}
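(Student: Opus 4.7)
The plan is to establish existence, uniqueness, and both energy bounds simultaneously via the classical Hilbert-space energy method. A key observation is that by Proposition~\ref{Frac. Lapl. properties}(2), the operator $D_{g}^{s}$ is symmetric and non-negative on $L^2(\mathbb{R}^d)$, since for sufficiently smooth test functions $v,w$ one has $\langle D_{g}^{s}v,w\rangle_{L^2}=\int_{\mathbb{R}^d} g(x)\,(-\Delta)^{s/2}v\,(-\Delta)^{s/2}w\,\d x$. Under the lemma's hypotheses this makes \eqref{Equation aux} a standard second-order abstract Cauchy problem, so existence and uniqueness of $u\in C([0,T];H^{s}(\mathbb{R}^d))\cap C^1([0,T];L^2(\mathbb{R}^d))$ follow from a Galerkin or semigroup argument as soon as the a priori estimate below closes.

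For the a priori estimate, I would test \eqref{Equation aux} against $u_t$ in $L^2(\mathbb{R}^d)$. Using the self-adjointness of $(-\Delta)^{s/2}$, the $D_{g}^{s}u$ term rearranges as $\int_{\mathbb{R}^d} g\,(-\Delta)^{s/2}u\,(-\Delta)^{s/2}u_t\,\d x$, so the identity collapses to
\[
\frac{\d}{\d t}\left[\tfrac{1}{2}\|u_t(t,\cdot)\|_{L^2}^2+\tfrac{1}{2}\int_{\mathbb{R}^d} g\,|(-\Delta)^{s/2}u(t,\cdot)|^2\,\d x\right]=0.
\]
Hence the energy is conserved. Invoking the positivity of $g$, understood as a strictly positive lower bound (absorbed in the implicit $\lesssim$ constant), I obtain control of both $\|u_t(t,\cdot)\|_{L^2}$ and $\|(-\Delta)^{s/2}u(t,\cdot)\|_{L^2}$ by the initial energy. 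The remaining $\|u(t,\cdot)\|_{L^2}$ piece of $\|u(t,\cdot)\|_1$ is handled by the fundamental theorem of calculus: $u(t,\cdot)=u_0+\int_0^t u_t(\sigma,\cdot)\,\d\sigma$ gives $\|u(t,\cdot)\|_{L^2}\le\|u_0\|_{L^2}+T\sup_{\sigma\in[0,T]}\|u_t(\sigma,\cdot)\|_{L^2}$.

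The two asserted bounds then differ only in how one controls the initial energy term $\int_{\mathbb{R}^d} g\,|(-\Delta)^{s/2}u_0|^2\,\d x$. For \eqref{Energy aux}, I would use the $L^\infty$ bound on $g$ directly to majorize this by $\|g\|_{L^\infty}\|(-\Delta)^{s/2}u_0\|_{L^2}^2\le\|g\|_{L^\infty}\|u_0\|_{H^s}^2$, producing the factor $1+\|g\|_{L^\infty}^{1/2}$. For \eqref{Energy1 aux}, I would first apply H\"older's inequality (Proposition~\ref{Holder inequality}) with exponents $d/(2s)$ and $d/(d-2s)$ to get $\int g\,|(-\Delta)^{s/2}u_0|^2\,\d x\le\|g\|_{L^{d/(2s)}}\|(-\Delta)^{s/2}u_0\|_{L^{2d/(d-2s)}}^2$, and then apply the fractional Sobolev inequality (Proposition~\ref{Prop. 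Sobolev estimate}) to the function $(-\Delta)^{s/2}u_0$, which lies in $H^s(\mathbb{R}^d)$ since $u_0\in H^{2s}(\mathbb{R}^d)$ (using Proposition~\ref{Frac. Lapl. properties}(3)). This bounds the right-hand side by $\|(-\Delta)^{s}u_0\|_{L^2}^2\le\|u_0\|_{H^{2s}}^2$ up to a constant, yielding the factor $1+\|g\|_{L^{d/(2s)}}^{1/2}$.

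The main obstacle I anticipate is the step extracting the genuine norm $\|(-\Delta)^{s/2}u(t,\cdot)\|_{L^2}^2$ from the weighted, conserved quantity $\int g\,|(-\Delta)^{s/2}u|^2\,\d x$: this requires the positivity of $g$ to really be a strictly positive lower bound, so that the resulting factor $g_{\min}^{-1/2}$ can be hidden in the implicit $\lesssim$ constant. Everything else in the proof is a routine assembly of the energy identity, the two H\"older/Sobolev estimates of the initial energy, and the time-integration bound for $\|u(t,\cdot)\|_{L^2}$.
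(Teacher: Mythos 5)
Your proposal is correct and follows essentially the same route as the paper's own proof: multiply by $u_t$ to obtain the conservation law for $\|u_t\|_{L^2}^2+\|g^{1/2}(-\Delta)^{s/2}u\|_{L^2}^2$, use the strictly positive lower bound of $g$ (which the paper also makes explicit as $\inf g=c_0>0$) to drop the weight, apply H\"older with conjugate exponents $d/(2s)$ and $d/(d-2s)$ followed by the fractional Sobolev inequality applied to $(-\Delta)^{s/2}u_0$ for the second estimate, and close the $L^2$-bound on $u$ itself via the fundamental theorem of calculus. The only cosmetic difference is that the paper phrases the H\"older step as $\|g^{1/2}\|_{L^{d/s}}^2\|(-\Delta)^{s/2}u_0\|_{L^{2d/(d-2s)}}^2$ rather than directly on the integral, which is algebraically the same thing.
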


\begin{proof}
    We multiply the equation in \eqref{Equation aux} by $u_t$, we integrate with respect to the variable $x$ over $\mathbb{R}^d$ and we take the real part to get
    \begin{equation}\label{Energy functional aux.}
        Re\Big(\langle u_{tt}(t,\cdot),u_{t}(t,\cdot)\rangle_{L^2} + \langle D_{g}^{s}u(t,\cdot),u_{t}(t,\cdot)\rangle_{L^2}\Big) = 0.
    \end{equation}
    We have
    \begin{equation}\label{En1 aux.}
        Re\langle u_{tt}(t,\cdot),u_{t}(t,\cdot)\rangle_{L^2} = \frac{1}{2}\partial_{t}\langle u_{t}(t,\cdot),u_{t}(t,\cdot)\rangle_{L^2} = \frac{1}{2}\partial_{t}\Vert u_{t}(t,\cdot)\Vert_{L^2}^2,
    \end{equation}
    and by using the self-adjointness of the fractional Laplacian (see Proposition \ref{Frac. Lapl. properties}), we get
    \begin{align}\label{En2 aux.}
        Re\langle D_{g}^{s}u(t,\cdot),u_{t}(t,\cdot)\rangle_{L^2} &= \frac{1}{2}\partial_{t}\langle g^{\frac{1}{2}}(\cdot)(-\Delta)^{\frac{s}{2}}u(t,\cdot),g^{\frac{1}{2}}(\cdot)(-\Delta)^{\frac{s}{2}}u(t,\cdot)\rangle_{L^2} \\
        & = \frac{1}{2}\partial_{t}\Vert g^{\frac{1}{2}}(\cdot)(-\Delta)^{\frac{s}{2}}u(t,\cdot)\Vert_{L^2}^2. \nonumber
    \end{align}
    It follows by substituting \eqref{En1 aux.} and \eqref{En2 aux.} into \eqref{Energy functional aux.} that
    \begin{equation}\label{Energy functional1 aux.}
        \partial_{t}\Big[ \Vert u_{t}(t,\cdot)\Vert_{L^2}^2 + \Vert g^{\frac{1}{2}}(\cdot)(-\Delta)^{\frac{s}{2}}u(t,\cdot)\Vert_{L^2}^2 \Big] = 0,
    \end{equation}
    this leads to the energy conservation law
    \begin{equation}\label{En3 aux.}
        \Vert u_{t}(t,\cdot)\Vert_{L^2}^2 + \Vert g^{\frac{1}{2}}(\cdot)(-\Delta)^{\frac{s}{2}}u(t,\cdot)\Vert_{L^2}^2 = \Vert u_{1}\Vert_{L^2}^2 + \Vert g^{\frac{1}{2}}(\cdot)(-\Delta)^{\frac{s}{2}}u_0\Vert_{L^2}^2.
    \end{equation}
    Thus, we get
    \begin{align}\label{Energy1 aux1}
        \Vert u_{t}(t,\cdot)\Vert_{L^2}^2 &\leq \Vert u_{1}\Vert_{L^2}^2 + \Vert g^{\frac{1}{2}}(\cdot)(-\Delta)^{\frac{s}{2}}u_0\Vert_{L^2}^2\\
        & \leq \Vert u_{1}\Vert_{L^2}^2 + \Vert g\Vert_{L^{\infty}}\Vert (-\Delta)^{\frac{s}{2}}u_0\Vert_{L^2}^2 \nonumber\\
        & \leq \Vert u_{1}\Vert_{L^2}^2 + \Vert g\Vert_{L^{\infty}}\Vert u_0\Vert_{H^s}^2,\nonumber
    \end{align}
    and similarly
    \begin{equation}\label{Energy2 aux.}
        \Vert g^{\frac{1}{2}}(\cdot)(-\Delta)^{\frac{s}{2}}u(t,\cdot)\Vert_{L^2}^2 \leq \Vert u_{1}\Vert_{L^2}^2 + \Vert g\Vert_{L^{\infty}}\Vert u_0\Vert_{H^s}^2.
    \end{equation}
    Using the assumption that $g$ is bounded from below, that is 
$$
\inf\limits_{x\in \mathbb{R}^d}g(x)=c_0 >0,
$$ 
we get
    \begin{equation}\label{Energy3 aux.}
        \Vert (-\Delta)^{\frac{s}{2}}u(t,\cdot)\Vert_{L^2}^2 \leq \Vert u_{1}\Vert_{L^2}^2 + \Vert g\Vert_{L^{\infty}}\Vert u_0\Vert_{H^s}^2.
    \end{equation}
    In order to estimate the solution $u$, we make use of the fundamental theorem of calculus. We have 
    \begin{equation}\label{Fund th of calc}
        u(t,x) = u_0 + \int_{0}^{t}u_{t}(\tau,x)\d \tau.
    \end{equation}
    Thus, by taking the $L^2$ norm in \eqref{Fund th of calc} and using Minkowski's integral inequality, together with \eqref{Energy1 aux1} and that $t\in [0,T]$, we get
    \begin{align}
        \Vert u(t,\cdot)\Vert_{L^2} &\leq \Vert u_0\Vert_{L^2} + \int_{0}^{t}\Vert u_{t}(\tau,\cdot)\Vert_{L^2}\d \tau\\
        & \leq \Vert u_0\Vert_{L^2} + \Vert u_{1}\Vert_{L^2} + \Vert g\Vert_{L^{\infty}}^{\frac{1}{2}}\Vert u_0\Vert_{H^s}\nonumber\\
        &\lesssim \Big(1 + \Vert g\Vert_{L^{\infty}}^{\frac{1}{2}}\Big) \Big[\Vert u_0\Vert_{H^s} + \Vert u_{1}\Vert_{L^2}\Big].\nonumber
    \end{align}
    This completes the proof of the first part of the lemma. Now, let $d>2s$. Arguing as above, we get the same conservation law formula:
    \begin{align}\label{En3 aux. q neq 2}
        \Vert u_{t}(t,\cdot)\Vert_{L^2}^2 + \Vert g^{\frac{1}{2}}(\cdot)(-\Delta)^{\frac{s}{2}}u(t,\cdot)\Vert_{L^2}^2 &\leq \Vert u_{1}\Vert_{L^2}^2 + \Vert g^{\frac{1}{2}}(\cdot)(-\Delta)^{\frac{s}{2}}u_0\Vert_{L^2}^2\\ &\leq \Vert u_{1}\Vert_{L^2}^2 + \Vert g^{\frac{1}{2}}\Vert_{L^p}^2\Vert(-\Delta)^{\frac{s}{2}}u_0\Vert_{L^q}^2\nonumber,
    \end{align}
    where the second term on the right hand side is estimated by using H\"older's inequality (see Theorem \ref{Holder inequality}) for $1<p,q<\infty$, satisfying $\frac{1}{p}+\frac{1}{q}=\frac{1}{2}$. Now, if we choose $q=\frac{2d}{d-2s}$ and consequently $p=\frac{d}{s}$, it follows from Proposition \ref{Prop. Sobolev estimate} that
\begin{equation}
\Vert u_0\Vert_{L^q} \lesssim \Vert (-\Delta)^{\frac{s}{2}}u_{0}(\cdot)\Vert_{L^2}\leq \Vert u_{0}\Vert_{H^s}.
\end{equation}
Thus
    \begin{align}\label{En4 aux. q neq 2}
        \Vert u_{t}(t,\cdot)\Vert_{L^2}^2 + \Vert g^{\frac{1}{2}}(\cdot)(-\Delta)^{\frac{s}{2}}u(t,\cdot)\Vert_{L^2}^2 &\lesssim \Vert u_{1}\Vert_{L^2}^2 +  \Vert g\Vert_{L^{\frac{p}{2}}} \Vert (-\Delta)^{\frac{s}{2}}\big ((-\Delta)^{\frac{s}{2}}u_0\big )\Vert_{L^2}^2 \\
        & \lesssim \Vert u_{1}\Vert_{L^2}^2 +  \Vert g\Vert_{L^{\frac{p}{2}}} \Vert (-\Delta)^{s}u_0\Vert_{L^2}^2 \\
        & \lesssim \Vert u_{1}\Vert_{L^2}^2 + \Vert g\Vert_{L^{\frac{d}{2s}}}\Vert u_0\Vert_{H^{2s}}^2,\nonumber
    \end{align}
    where we used that $\Vert g^{\frac{1}{2}}\Vert_{L^p}^2 = \Vert g\Vert_{L^{\frac{p}{2}}}$. To estimate $u$, we use again the fundamental theorem of calculus to finally get
    \begin{align} 
        \Vert u(t,\cdot)\Vert_{L^2} &\leq \Vert u_0\Vert_{L^2} + \int_{0}^{t}\Vert u_{t}(\tau,\cdot)\Vert_{L^2}\d \tau\\
        & \lesssim \Vert u_0\Vert_{H^{2s}} + \Vert u_{1}\Vert_{L^2} + \Vert g\Vert_{L^{\frac{d}{2s}}}^{\frac{1}{2}}\Vert u_0\Vert_{H^{2s}}\nonumber \\
        & \lesssim \Big(1+\Vert g\Vert_{L^{\frac{d}{2s}}}^{\frac{1}{2}}\Big)\Big[\Vert u_{0}\Vert_{H^{2s}}+\Vert u_{1}\Vert_{L^2}\Big],\nonumber
    \end{align}
ending the proof.    
\end{proof}

\subsection{Homogeneous equation case}
Let us now consider the Cauchy problem \eqref{Equation intro} in the case when the source term $f\equiv 0$. In this case, we have the following result.
\begin{lem}\label{Lemma1}
    Let $g\in L^{\infty}(\mathbb{R}^d)$ be positive and $m,b\in L^{\infty}(\mathbb{R}^d)$ be non-negative. Suppose that $u_0 \in H^{s}(\mathbb{R}^d)$ and $u_1 \in L^{2}(\mathbb{R}^d)$. Then the unique solution $u\in C([0,T];H^{s}(\mathbb{R}^d))\cap C^1([0,T];L^{2}(\mathbb{R}^d))$ to the Cauchy problem \eqref{Equation intro} satisfies the estimate
    \begin{equation}\label{Energy estimate}
         \Vert u(t,\cdot)\Vert_1 \lesssim \Big(1 + \Vert g \Vert_{L^{\infty}} + \Vert m\Vert_{L^{\infty}}\Big) \Big(1+ \Vert b \Vert_{L^{\infty}} +  \Vert m \Vert_{L^{\infty}}^{\frac{1}{2}}\Big) \Big[ \Vert u_0\Vert_{H^s} + \Vert u_1\Vert_{L^2} \Big]  ,
    \end{equation}
     for all $t\in [0,T]$.

Moreover, if $g\equiv 1$, there is a unique solution $u\in C([0,T];H^{s}(\mathbb{R}^d))\cap  C^1([0,T];L^{2}(\mathbb{R}^d))$ to \eqref{Equation intro}, and it satisfies
    \begin{equation}\label{Energy estimate g1}
         \Vert u(t,\cdot)\Vert_1 \lesssim \big(2+ \Vert m\Vert_{L^{\infty}}\big) \big(1+ \Vert b \Vert_{L^{\infty}} +  \Vert m \Vert_{L^{\infty}}^{\frac{1}{2}}\big) \Big[ \Vert u_0\Vert_{H^s} + \Vert u_1\Vert_{L^2} \Big]  ,
    \end{equation}
     for all $t\in [0,T]$.
     
     Let $d>2s$. Then, for $g\in \big(L^{\frac{d}{2s}}(\mathbb{R}^d)\cap L^{\frac{d}{s}}(\mathbb{R}^d)\cap W^{s,\frac{d}{s}}(\mathbb{R}^d)\big)$, $m\in \big(L^{\frac{d}{2s}}(\mathbb{R}^d)\cap L^{\frac{d}{s}}(\mathbb{R}^d)\big)$, $b\in L^{\frac{d}{s}}(\mathbb{R}^d)$, and $u_0 \in H^{3s}(\mathbb{R}^d)$, $u_1 \in H^{2s}(\mathbb{R}^d)$, there is a unique solution $u\in C\big([0,T]; H^{s}(\mathbb{R}^d)\cap C^{1}\big([0,T]; H^{s}(\mathbb{R}^d)\big)$ to \eqref{Equation intro} that satisfies the estimate
     \begin{align}\label{Energy1 aux1+1}
        \Vert u(t,\cdot)\Vert_3 &\lesssim \big( 1+\Vert m\Vert_{L^{\frac{d}{s}}}+\Vert b\Vert_{L^{\frac{d}{s}}} \big ) \bigg[ 1+ \Vert m\Vert_{L^{\frac{d}{s}}} +\Vert b\Vert_{L^{\frac{d}{s}}} +\Vert g\Vert_{L^{\frac{d}{2s}}}^{\frac{1}{2}}   \\ &\quad+\Vert m\Vert_{L^{\frac{d}{2s}}}^{\frac{1}{2}} +\Vert b\Vert_{L^{\frac{d}{s}}} \Vert g\Vert_{W^{s,\frac{d}{s}}} \bigg]\Big[ \Vert u_0\Vert_{H^{3s}} +\Vert u_1\Vert_{H^{2s}}\Big] \nonumber,
    \end{align}
   uniformly in $t\in [0,T]$.

   Moreover, if $g\equiv 1$, and $u_0\in H^{2s}(\mathbb{R}^d)$, $u_1\in H^{s}(\mathbb{R}^d)$, then, the unique solution $u\in C([0,T]; H^{2s}(\mathbb{R}^d))\cap C^{1}([0,T]; H^{s}(\mathbb{R}^d))$ to \eqref{Equation intro}  satisfies
   \begin{equation}\label{Energy estimate1}
         \Vert u(t,\cdot)\Vert_2 \lesssim \Big(1+\Vert m\Vert_{L^{\frac{d}{s}}}\Big)\Big(1+\Vert m\Vert_{L^{\frac{d}{2s}}}\Big)\Big(1+\Vert b\Vert_{L^{\frac{d}{s}}}\Big)^2\Big[\Vert u_0\Vert_{H^{2s}} + \Vert u_{1}\Vert_{H^s}\Big],
    \end{equation}
     uniformly in $t\in [0,T]$.
\end{lem}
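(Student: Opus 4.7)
For the low regularity estimate \eqref{Energy estimate}, my plan is to apply the standard energy method adapted to $D_g^s$, exactly as in Lemma \ref{Aux. result}: multiply the homogeneous equation by $u_t$, integrate over $\mathbb{R}^d$, and take real parts. Using the self-adjointness of the fractional Laplacian (Proposition \ref{Frac. Lapl. properties}, item (2)), $\Re\langle D_g^s u, u_t\rangle_{L^2}$ becomes $\tfrac12\partial_t\Vert g^{1/2}(-\Delta)^{s/2}u\Vert_{L^2}^2$; likewise $\Re\langle mu,u_t\rangle_{L^2}=\tfrac12\partial_t\Vert m^{1/2}u\Vert_{L^2}^2$, while $\Re\langle bu_t,u_t\rangle_{L^2}=\Vert b^{1/2}u_t\Vert_{L^2}^2\geq 0$ is a dissipation that can be discarded. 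Integrating in time yields a conservation-type inequality which, together with $\Vert g^{1/2}(-\Delta)^{s/2}u_0\Vert_{L^2}^2\leq\Vert g\Vert_{L^{\infty}}\Vert u_0\Vert_{H^s}^2$, $\Vert m^{1/2}u_0\Vert_{L^2}^2\leq\Vert m\Vert_{L^{\infty}}\Vert u_0\Vert_{L^2}^2$, and the lower bound $\inf g=c_0>0$, controls $\Vert u_t\Vert_{L^2}$ and $\Vert(-\Delta)^{s/2}u\Vert_{L^2}$. The $L^2$-norm of $u$ itself is then recovered from $u(t)=u_0+\int_0^t u_t(\tau)\,\d\tau$ by Minkowski's inequality, exactly as in the auxiliary lemma. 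The case $g\equiv 1$ in \eqref{Energy estimate g1} is a direct specialization.

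For the higher regularity estimate \eqref{Energy1 aux1+1}, the key trick is to differentiate the equation in time: $w:=u_t$ solves the same homogeneous Cauchy problem, but with data $w(0)=u_1$ and $w_t(0)=u_{tt}(0)=-D_g^s u_0-mu_0-bu_1$. I would first establish an $L^{d/s}$/$L^{d/(2s)}$ analogue of \eqref{Energy estimate} for the full equation, estimating $\Vert g^{1/2}(-\Delta)^{s/2}u_0\Vert_{L^2}$ by H\"older with exponents $(d/s,2d/(d-2s))$ combined with Proposition \ref{Prop. Sobolev estimate}, and similarly for the $m,b$ contributions, in the spirit of the $L^{d/(2s)}$ part of Lemma \ref{Aux. result}. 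Applying this analogue to $w$ controls $\Vert u_t\Vert_{L^2}$, $\Vert(-\Delta)^{s/2}u_t\Vert_{L^2}$ and $\Vert u_{tt}\Vert_{L^2}$ in terms of $\Vert u_1\Vert_{H^{2s}}$ and $\Vert u_{tt}(0)\Vert_{L^2}$, while $\Vert u\Vert_{L^2}$ and $\Vert(-\Delta)^{s/2}u\Vert_{L^2}$ are recovered by integrating $u_t$ in time. The crucial step is the $L^2$-bound for $u_{tt}(0)$: the terms $\Vert mu_0\Vert_{L^2}$ and $\Vert bu_1\Vert_{L^2}$ are handled by H\"older with $(d/s,2d/(d-2s))$ and Sobolev embedding, giving $\Vert m\Vert_{L^{d/s}}\Vert u_0\Vert_{H^s}$ and $\Vert b\Vert_{L^{d/s}}\Vert u_1\Vert_{H^s}$; the operator term $\Vert D_g^s u_0\Vert_{L^2}=\Vert(-\Delta)^{s/2}(g(-\Delta)^{s/2}u_0)\Vert_{L^2}$ is treated by Kato--Ponce (Proposition \ref{The Kato-Ponce Inequality}) with $(p_1,q_1)=(p_2,q_2)=(d/s,2d/(d-2s))$, producing two contributions which after Sobolev embedding are bounded by $\Vert g\Vert_{W^{s,d/s}}\Vert u_0\Vert_{H^{2s}}$ and $\Vert g\Vert_{L^{d/s}}\Vert u_0\Vert_{H^{3s}}$ respectively. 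This is precisely where the regularity $u_0\in H^{3s}$ enters, and the product factor $\Vert b\Vert_{L^{d/s}}\Vert g\Vert_{W^{s,d/s}}$ in the stated bound arises when the coefficient constant of the $w$-energy estimate is multiplied by the Kato--Ponce bound for $\Vert u_{tt}(0)\Vert_{L^2}$.

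Finally, the estimate \eqref{Energy estimate1} under $g\equiv 1$ follows by the same differentiation-in-time strategy, but is much simpler: since $D_g^s=(-\Delta)^s$, one has $\Vert D_g^s u_0\Vert_{L^2}\leq\Vert u_0\Vert_{H^{2s}}$ without any Kato--Ponce, so only $u_0\in H^{2s}$ and $u_1\in H^s$ are needed. The extra term $\Vert(-\Delta)^s u\Vert_{L^2}$ appearing in $\Vert u\Vert_2$ is then controlled directly from the equation, by writing $(-\Delta)^s u=-u_{tt}-mu-bu_t$ and estimating each summand in $L^2$ using the bounds already obtained. The main obstacle throughout is to keep track of the precise coefficient factors through the iterated energy identities and to choose the H\"older exponents in the Kato--Ponce step so that the resulting Sobolev embeddings match exactly the prescribed $H^{3s}$ regularity of $u_0$.
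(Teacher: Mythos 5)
Your proposal is correct and uses the same core toolkit as the paper -- the weighted energy functional $E(t)=\Vert u_t\Vert_{L^2}^2+\Vert g^{1/2}(-\Delta)^{s/2}u\Vert_{L^2}^2+\Vert m^{1/2}u\Vert_{L^2}^2$ with $\partial_t E\le 0$, the time-differentiation trick $w=u_t$ with $w_t(0)=-D_g^s u_0-mu_0-bu_1$, the Kato--Ponce estimate for $\Vert D_g^s u_0\Vert_{L^2}$ with $(p_i,q_i)=(d/s,2d/(d-2s))$, and H\"older plus the fractional Sobolev inequality for the $m,b$ contributions -- but there is one organizational difference worth noting. To recover $\Vert u(t,\cdot)\Vert_{L^2}$ you integrate the already-controlled $u_t$ via the fundamental theorem of calculus; the paper instead regards $\delta=-mu-bu_t$ as a source for the free operator $\partial_t^2+D_g^s$ and applies Duhamel's principle (Theorem \ref{Thm Duhamel}) together with Lemma \ref{Aux. result}, then bounds $\Vert\delta(\tau,\cdot)\Vert_{L^2}$. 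Your FTC route is more elementary and in fact yields a slightly sharper constant, namely $(1+\Vert g\Vert_{L^\infty}+\Vert m\Vert_{L^\infty})^{1/2}$ in place of the paper's larger bracket, which of course still implies \eqref{Energy estimate}; the paper's Duhamel formulation is chosen to match the structure reused in the non-homogeneous, uniqueness and coherence theorems, where a genuine source term forces the decomposition anyway. In the $d>2s$ case your plan to apply the weak-coefficient energy estimate directly to $w=u_t$ is a minor reshuffling of what the paper does (it estimates $\Vert(-\Delta)^{s/2}u_t\Vert_{L^2}$ inside the Duhamel bound for $\Vert\delta\Vert_{L^2}$); both reduce to controlling $\Vert u_{tt}(0)\Vert_{L^2}$ exactly as you describe, and the Kato--Ponce step is where $u_0\in H^{3s}$ and the factor $\Vert b\Vert_{L^{d/s}}\Vert g\Vert_{W^{s,d/s}}$ enter, as you correctly identify. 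Finally, for the $g\equiv 1$ estimate \eqref{Energy estimate1} the paper simply invokes Lemma 2.5 of \cite{RST24}, whereas you re-derive it: your observation that $(-\Delta)^s u=-u_{tt}-mu-bu_t$ gives the missing $\Vert(-\Delta)^s u\Vert_{L^2}$ term of $\Vert u\Vert_2$ is the right way to close that argument from scratch.
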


\begin{proof}
    Multiplying the equation in \eqref{Equation intro} by $u_t$ and integrating with respect to the variable $x$ over $\mathbb{R}^d$ and taking the real part, we get
    \begin{align} \label{Energy functional}
        Re\Bigg(\langle u_{tt}(t,\cdot),&u_{t}(t,\cdot)\rangle_{L^2} + \langle D_{g}^{s}u(t,\cdot),u_{t}(t,\cdot)\rangle_{L^2}\\
        & + \langle m(\cdot)u(t,\cdot),u_{t}(t,\cdot)\rangle_{L^2} + \langle b(\cdot)u_{t}(t,\cdot),u_{t}(t,\cdot)\rangle_{L^2}\Bigg) = 0.\nonumber
    \end{align}
    We easily show that
    \begin{equation}\label{En1}
        Re\langle u_{tt}(t,\cdot),u_{t}(t,\cdot)\rangle_{L^2} = \frac{1}{2}\partial_{t}\langle u_{t}(t,\cdot),u_{t}(t,\cdot)\rangle_{L^2} = \frac{1}{2}\partial_{t}\Vert u_{t}(t,\cdot)\Vert_{L^2}^2,
    \end{equation}
    and
    \begin{align}\label{En2}
        Re\langle D_{g}^{s} u(t,\cdot),u_{t}(t,\cdot)\rangle_{L^2} &= \frac{1}{2}\partial_{t}\langle g^{\frac{1}{2}}(\cdot)(-\Delta)^{\frac{s}{2}}u(t,\cdot),g^{\frac{1}{2}}(\cdot)(-\Delta)^{\frac{s}{2}}u(t,\cdot)\rangle_{L^2} \\
        & = \frac{1}{2}\partial_{t}\Vert g^{\frac{1}{2}}(\cdot)(-\Delta)^{\frac{s}{2}}u(t,\cdot)\Vert_{L^2}^2 \nonumber,
    \end{align}
    where we used the self-adjointness of the fractional Laplacian (see Proposition \ref{Frac. Lapl. properties}). For the remaining terms in \eqref{Energy functional}, we proceed as follows
    \begin{align}\label{En3}
        Re\langle m(\cdot)u(t,\cdot),u_{t}(t,\cdot)\rangle_{L^2}&=\frac{1}{2}\partial_t \langle m^{\frac{1}{2}}(\cdot) u(t,\cdot), m^{\frac{1}{2}}(\cdot) u(t,\cdot)\rangle_{L^2} \\&=\frac{1}{2}\partial_{t} \Vert m^{\frac{1}{2}} (\cdot) u(t,\cdot)\Vert_{L^2}^2,\nonumber
    \end{align}
    and
    \begin{align}\label{En4}
        Re\langle b(\cdot)u_{t}(t,\cdot),u_{t}(t,\cdot)\rangle_{L^2}&=\frac{1}{2} \langle b^{\frac{1}{2}}(\cdot)u_{t}(t,\cdot),b^{\frac{1}{2}}(\cdot)u_{t}(t,\cdot)\rangle_{L^2} \\ &= \frac{1}{2} \Vert b^{\frac{1}{2}}(\cdot)u_{t}(t,\cdot)\Vert_{L^2}^2.\nonumber
    \end{align}
    By substituting \eqref{En1},\eqref{En2},\eqref{En3} and \eqref{En4} in \eqref{Energy functional} we get
    \begin{equation}\label{Energy functional1}
        \partial_{t}\Big[ \Vert u_{t}(t,\cdot)\Vert_{L^2}^2 + \Vert g^{\frac{1}{2}}(\cdot) (-\Delta)^{\frac{s}{2}}u(t,\cdot)\Vert_{L^2}^2 + \Vert m^{\frac{1}{2}}(\cdot) u(t,\cdot)\Vert_{L^2}^2 \Big] =  -\Vert b^{\frac{1}{2}}(\cdot)u_{t}(t,\cdot)\Vert_{L^2}^2.
    \end{equation}
    Let us define the energy function of the system \eqref{Equation intro} by
    \begin{equation}\label{Energy function}
        E(t):= \Vert u_{t}(t,\cdot)\Vert_{L^2}^2 + \Vert g^{\frac{1}{2}}(\cdot) (-\Delta)^{\frac{s}{2}}u(t,\cdot)\Vert_{L^2}^2 + \Vert m^{\frac{1}{2}}(\cdot)u(t,\cdot)\Vert_{L^2}^2.
    \end{equation}    
    It follows from \eqref{Energy functional1} that $\partial_{t}E(t)\leq 0$ and consequently that we have a decay of energy, that is: $E(t)\leq E(0)$ for all $t\in [0,T]$, where
    \begin{equation}
        E(0):= \Vert u_{1} \Vert_{L^2}^2 + \Vert g^{\frac{1}{2}}(\cdot) (-\Delta)^{\frac{s}{2}}u_{0}\Vert_{L^2}^2 + \Vert m^{\frac{1}{2}}(\cdot)u_{0}\Vert_{L^2}^2.
    \end{equation}
    By taking into consideration the estimate
    \begin{equation}
        \Vert m^{\frac{1}{2}}(\cdot)u_{0}\Vert_{L^2}^2 \leq \Vert m\Vert_{L^{\infty}}\Vert u_0\Vert_{L^2}^2,
    \end{equation}
    it follows that all terms in \eqref{Energy function} satisfy the estimates
    \begin{align}\label{Energy1}
        \Vert m^{\frac{1}{2}}(\cdot)u(t,\cdot)\Vert_{L^2}^2 & \leq \Vert u_{1}\Vert_{L^2}^2 + \Vert g^{\frac{1}{2}} (-\Delta)^{\frac{s}{2}}u_0\Vert_{L^2}^2 + \Vert m\Vert_{L^{\infty}}\Vert u_0\Vert_{L^2}^2\\
        & \lesssim \Vert u_{1}\Vert_{L^2}^2 +\Vert g \Vert_{L^{\infty}}  \Vert u_0\Vert_{H^s}^2 + \Vert m\Vert_{L^{\infty}}\Vert u_0\Vert_{H^s}^2\nonumber\\
        & \lesssim \big(1 + \Vert g \Vert_{L^{\infty}} + \Vert m\Vert_{L^{\infty}}\big)\big[ \Vert u_0\Vert_{H^s} + \Vert u_1\Vert_{L^2} \big]^2 \nonumber ,
    \end{align}
    as well as
    \begin{equation}\label{Energy2}
        \bigg\{\Vert u_{t}(t,\cdot)\Vert_{L^2}^2, \Vert (-\Delta)^{\frac{s}{2}}u(t,\cdot)\Vert_{L^2}^2\bigg\} \lesssim \big(1 + \Vert g \Vert_{L^{\infty}} + \Vert m\Vert_{L^{\infty}}\big)\big[ \Vert u_0\Vert_{H^s} + \Vert u_1\Vert_{L^2} \big]^2,
    \end{equation}
    uniformly in $t\in [0,T]$, where we used the fact that
    \begin{equation}
        \Big\{ \Vert (-\Delta)^{\frac{s}{2}}u_0\Vert_{L^2}, \Vert u_0\Vert_{L^2}\Big\} \leq \Vert u_0\Vert_{H^s},
    \end{equation}
    and that $g$ is bounded from below. Now, we need to estimate $u$. To this end, we recall the estimate \eqref{Energy aux} in Lemma \ref{Aux. result}, that is    
    \begin{align} \label{Energy aux 1+1}
        \bigg\{\Vert u(t,\cdot)\Vert_{L^2}, \Vert (-\Delta)^{\frac{s}{2}}u(t,\cdot)\Vert_{L^2}, \Vert u_{t}(t,\cdot)\Vert_{L^2}\bigg\} & \leq \Vert u_0\Vert_{L^2} + \Vert u_{1}\Vert_{L^2} + \Vert g\Vert_{L^{\infty}}^{\frac{1}{2}}\Vert u_0\Vert_{H^s}, 
     \end{align} 
    and we apply Duhamel's principle (Theorem \ref{Thm Duhamel} with $\lambda \equiv0$) for the problem
    \begin{equation}\label{Equation aux 1+1}
    \bigg\lbrace
    \begin{array}{l}
    u_{tt}(t,x) + D_{g}^{s}u(t,x) =\delta (t,x) ,\quad (t,x)\in [0,T]\times\mathbb{R}^d,\\
    u(0,x)=u_{0}(x),\quad u_{t}(0,x)=u_{1}(x),\quad x\in \mathbb{R}^d,
    \end{array}
    \end{equation} 
    where $\delta (t,x):=-m(x)u(t,x)-b(x)u_{t}(t,x)$. The solution to \eqref{Equation aux 1+1} is given by
\begin{equation}\label{solution mess and dissipation estimate}
u(t,x)=W(t,x)+\int_{0}^{T}V(t,x;\tau)d\tau,
\end{equation}
 where $W(t,x)$ is the solution to the homogeneous problem
    \begin{equation*}\label{Equation1 mb}
        \bigg\{
        \begin{array}{l}
        W_{tt}(t,x) +D_g^s W(t,x) =0,\quad (t,x)\in [0,T]\times\mathbb{R}^d, \\
        W(0,x)=u_{0}(x),\quad W_{t}(0,x)=u_{1}(x),\quad x\in \mathbb R^d,
        \end{array}
    \end{equation*}
    and $V(t,x;\tau)$ solves the problem
\begin{equation*}
    \left\lbrace
    \begin{array}{l}
    V_{tt}(t,x;\tau)+D_g^s V(t,x;\tau) =0,\,\,\, (t,x)\in\left(\tau,T\right]\times \mathbb{R}^{d},\\
    V(\tau,x;\tau)=0, ~V_{t}(\tau,x;\tau)=\delta(\tau,x).
    \end{array}
    \right.
\end{equation*}
Taking the $L^{2}$-norm in \eqref{solution mess and dissipation estimate} and by using the above mentioned estimate \eqref{Energy aux 1+1}, we get
\begin{equation}
    \Vert u(t,\cdot)\Vert_{L^2}  \leq \Vert W_{\varepsilon}(t,\cdot)\Vert_{L^2} + \int_{0}^{T}\Vert V_{\varepsilon}(t,\cdot;s)\Vert_{L^2} ds,\label{Duhamel1 solution estimate mb}    
\end{equation}
  where we used  Minkowski's integral inequality. 
  We have
    \begin{equation*}
        \Vert W_{\varepsilon}(t,\cdot)\Vert_{L^2} \lesssim\Vert u_0\Vert_{L^2} + \Vert u_{1}\Vert_{L^2} + \Vert g\Vert_{L^{\infty}}^{\frac{1}{2}}\Vert u_0\Vert_{H^s},
    \end{equation*}
    and
    \begin{equation*}
        \Vert V_{\varepsilon}(t,\cdot;s)\Vert_{L^2} \leq  \Vert \delta(\tau,\cdot)\Vert_{L^2},
    \end{equation*} 
    since $V_{\varepsilon}$ satisfies the estimate \eqref{Energy aux 1+1}. Thus, we get
        \begin{equation}\label{Estimate1 u}
        \Vert u(t,\cdot)\Vert_{L^2}\lesssim \Vert u_0\Vert_{L^2} + \Vert u_{1}\Vert_{L^2} + \Vert g\Vert_{L^{\infty}}^{\frac{1}{2}}\Vert u_0\Vert_{H^s} + \int_{0}^{t}\Vert \delta (\tau,\cdot)\Vert_{L^2}d\tau,
    \end{equation}
    for all $t\in [0,T]$.
    To estimate $\Vert \delta (\tau,\cdot)\Vert_{L^2}$, the last term in the above inequality, we use the triangle inequality and the estimates
    \begin{align}
        \Vert m(\cdot)u(\tau,\cdot)\Vert_{L^2} & \leq \Vert m\Vert_{L^{\infty}}^{\frac{1}{2}} \Vert m^{\frac{1}{2}}(\cdot)u(\tau,\cdot)\Vert_{L^2}\\
        & \lesssim \Vert m\Vert_{L^{\infty}}^{\frac{1}{2}}\big(1 + \Vert g \Vert_{L^{\infty}} + \Vert m\Vert_{L^{\infty}}\big)^{\frac{1}{2}}\big[ \Vert u_0\Vert_{H^s} + \Vert u_1\Vert_{L^2} \big] \nonumber\\
        & \lesssim \big(1 + \Vert g \Vert_{L^{\infty}} + \Vert m\Vert_{L^{\infty}}\big)\big[ \Vert u_0\Vert_{H^s} + \Vert u_1\Vert_{L^2} \big], \nonumber
    \end{align}
    resulting from \eqref{Energy1}, and similarly
    \begin{align}
        \Vert b(\cdot)u_{t}(\tau,\cdot)\Vert_{L^2} & \leq \Vert b\Vert_{L^{\infty}} \Vert u_{t}(\tau,\cdot)\Vert_{L^2}\\
        & \lesssim \Vert b\Vert_{L^{\infty}}\big(1 + \Vert g \Vert_{L^{\infty}} + \Vert m\Vert_{L^{\infty}}\big)^{\frac{1}{2}}\big[ \Vert u_0\Vert_{H^s} + \Vert u_1\Vert_{L^2} \big], \nonumber
    \end{align}
    resulting from \eqref{Energy2}, to get
    \begin{equation}\label{Estimate f}
        \Vert \delta (\tau,\cdot)\Vert_{L^2} \lesssim \big( \Vert b \Vert_{L^{\infty}} +  \Vert m \Vert_{L^{\infty}}^{\frac{1}{2}}\big) \big(1 + \Vert g \Vert_{L^{\infty}} + \Vert m\Vert_{L^{\infty}}\big)^{\frac{1}{2}}\big[ \Vert u_0\Vert_{H^s} + \Vert u_1\Vert_{L^2} \big].
    \end{equation}
    The desired estimate for $u$ follows by substituting \eqref{Estimate f} into \eqref{Estimate1 u}. We obtain
    \begin{align}\label{Estimate1 u 1+1}
        \Vert u(t,\cdot)\Vert_{L^2} &\lesssim  \Vert u_0\Vert_{L^2} + \Vert u_{1}\Vert_{L^2} + \Vert g\Vert_{L^{\infty}}^{\frac{1}{2}}\Vert u_0\Vert_{H^s} \\ &\quad + \big( \Vert b \Vert_{L^{\infty}} +  \Vert m \Vert_{L^{\infty}}^{\frac{1}{2}}\big) \big(1 + \Vert g \Vert_{L^{\infty}} + \Vert m\Vert_{L^{\infty}}\big)^{\frac{1}{2}}\big[ \Vert u_0\Vert_{H^s}  + \Vert u_1\Vert_{L^2} \big]  \nonumber \\ &\lesssim \big(1 + \Vert g \Vert_{L^{\infty}} + \Vert m\Vert_{L^{\infty}}\big)^{\frac{1}{2}}\big[ \Vert u_0\Vert_{H^s} + \Vert u_1\Vert_{L^2} \big] \nonumber \\ &\quad + \big( \Vert b \Vert_{L^{\infty}} +  \Vert m \Vert_{L^{\infty}}^{\frac{1}{2}}\big) \big(1 + \Vert g \Vert_{L^{\infty}} + \Vert m\Vert_{L^{\infty}}\big)^{\frac{1}{2}}\big[ \Vert u_0\Vert_{H^s}  + \Vert u_1\Vert_{L^2} \big]  \nonumber \\ & \lesssim \big(1 + \Vert g \Vert_{L^{\infty}} + \Vert m\Vert_{L^{\infty}}\big)^{\frac{1}{2}} \big(1+ \Vert b \Vert_{L^{\infty}} +  \Vert m \Vert_{L^{\infty}}^{\frac{1}{2}}\big) \big[ \Vert u_0\Vert_{H^s} + \Vert u_1\Vert_{L^2} \big] \nonumber \\
       & \lesssim \big(1 + \Vert g \Vert_{L^{\infty}} + \Vert m\Vert_{L^{\infty}}\big) \big(1+ \Vert b \Vert_{L^{\infty}} +  \Vert m \Vert_{L^{\infty}}^{\frac{1}{2}}\big) \big[ \Vert u_0\Vert_{H^s} + \Vert u_1\Vert_{L^2} \big] \nonumber ,
    \end{align}
    where we used that $\lambda^{\frac{1}{2}}\leq \lambda$ for $\lambda \geq 1$. Let us now consider the case when $g\equiv 1$. The desired estimate is a direct consequence of \eqref{Estimate1 u 1+1} by taking that $\Vert g\Vert_{L^{\infty}}=1$, for $g\equiv 1$. We get
    \begin{equation}
        \Vert u(t,\cdot)\Vert_1 \lesssim \big(2+ \Vert m\Vert_{L^{\infty}}\big) \big(1+ \Vert b \Vert_{L^{\infty}} +  \Vert m \Vert_{L^{\infty}}^{\frac{1}{2}}\big) \Big[ \Vert u_0\Vert_{H^s} + \Vert u_1\Vert_{L^2} \Big].
    \end{equation}
    This ends the proof of the first part of the lemma. Now, let $d>2s$. By repeating the above reasoning we get the same conservation law formula
    \begin{align}\label{En3 aux. q neq 2 d}
        \Vert u_{t}(t,\cdot)\Vert_{L^2}^2 + \Vert g^{\frac{1}{2}}(\cdot)&(-\Delta)^{\frac{s}{2}}u(t,\cdot)\Vert_{L^2}^2 + \Vert m^{\frac{1}{2}} (\cdot) u(t,\cdot) \Vert_{L^2}^2 \\ & \leq \Vert u_{1}\Vert_{L^2}^2 +\Vert g^{\frac{1}{2}}(\cdot)(-\Delta)^{\frac{s}{2}}u_0\Vert_{L^2}^2+ \Vert m^{\frac{1}{2}} (\cdot) u_0 \Vert_{L^2}^2\nonumber \\
        & \leq \Vert u_{1}\Vert_{L^2}^2 + \Vert g^{\frac{1}{2}}\Vert_{L^p}^2\Vert(-\Delta)^{\frac{s}{2}}u_0\Vert_{L^q}^2  +\Vert m^{\frac{1}{2}} \Vert_{L^p}^2 \Vert u_0 \Vert_{L^q}^2,\nonumber 
    \end{align}
    where the second and the third terms on the right hand side are estimated by using H\"older's inequality (see Theorem \ref{Holder inequality}) for $1<p<\infty$ such that $\frac{1}{p} + \frac{1}{q} = \frac{1}{2}$. Now, if we choose $q=\frac{2d}{d-2s}$ and consequently $p=\frac{d}{s}$, it follows from Proposition \ref{Prop. Sobolev estimate} that 
    \begin{equation}
      \Vert u_0\Vert_{L^q} \lesssim \Vert (-\Delta)^{\frac{s}{2}}u_{0}(\cdot)\Vert_{L^2}\leq \Vert u_{0}\Vert_{H^s},
    \end{equation}
   and by using the assumption that $g$ is bounded from below, that is 
   $$
   \inf_{x\in \mathbb{R}^d}g(x)=c_0 >0,
   $$ 
   we get   
    \begin{align}\label{En4 aux. q neq 2 d}
        \Vert u_{t}(t,\cdot)\Vert_{L^2}^2 + \Vert (-\Delta)^{\frac{s}{2}}&u(t,\cdot)\Vert_{L^2}^2 +\Vert m^{\frac{1}{2}} (\cdot) u(t,\cdot) \Vert_{L^2}^2 \\ 
         &\lesssim \Vert u_{1}\Vert_{L^2}^2 + \Vert g\Vert_{L^{\frac{p}{2}}}\Vert u_0\Vert_{H^{2s}}^2 + \Vert m\Vert_{L^{\frac{p}{2}}} \Vert u_0 \Vert_{H^s}^2\nonumber \\
        & \lesssim \Vert u_{1}\Vert_{L^2}^2 + \Vert g\Vert_{L^{\frac{d}{2s}}}\Vert u_0\Vert_{H^{2s}}^2 + \Vert m\Vert_{L^{\frac{d}{2s}}} \Vert u_0 \Vert_{H^{2s}}^2\nonumber \\
        &\lesssim \Vert u_{1}\Vert_{L^2}^2 +( \Vert g\Vert_{L^{\frac{d}{2s}}} + \Vert m\Vert_{L^{\frac{d}{2s}}})\Vert u_0\Vert_{H^{2s}}^2, \nonumber
    \end{align}
where we used that $\Vert g^{\frac{1}{2}}\Vert_{L^p}^2 = \Vert g\Vert_{L^{\frac{p}{2}}}$ and the embedding $H^{2s}(\mathbb R^d) \hookrightarrow H^s(\mathbb R^d)$. To estimate $u$,  we recall again the second estimate in Lemma \eqref{Aux. result}, that is
    \begin{equation}\label{Energy1 aux2}
        \bigg\{\Vert u(t,\cdot)\Vert_{L^2}, \Vert (-\Delta)^{\frac{s}{2}}u(t,\cdot)\Vert_{L^2}, \Vert u_{t}(t,\cdot)\Vert_{L^2}\bigg\} \lesssim \Vert u_0\Vert_{L^2} + \Vert u_{1}\Vert_{L^2} + \Vert g\Vert_{L^{\frac{d}{2s}}}^{\frac{1}{2}}\Vert u_0\Vert_{H^{2s}},
    \end{equation}
     and we use Duhamel's principle and we argue similarly as in the first part of the proof, to finally get
    \begin{align} \label{estimate u }
        \Vert u(t,\cdot)\Vert_{L^2} &\leq \Vert u_0\Vert_{L^2} +\Vert u_{1}\Vert_{L^2} + \Vert g\Vert_{L^{\frac{d}{2s}}}^{\frac{1}{2}}\Vert u_0\Vert_{H^{2s}}+\int_{0}^{t}\Vert \delta (\tau,\cdot) \Vert_{L^2}d\tau,
    \end{align}
    for all $t\in [0,T]$, with $\delta (t,x):=-m(x)u(t,x)-b(x)u_{t}(t,x)$. In order to estimate $\Vert \delta (\tau,\cdot)\Vert_{L^2}$, we use the triangle inequality to get
    \begin{equation}\label{Estimate2 f}
        \Vert \delta (\tau,\cdot)\Vert_{L^2}\leq \Vert m(\cdot)u(\tau,\cdot)\Vert_{L^2} + \Vert b(\cdot)u_{t}(\tau,\cdot)\Vert_{L^2}.
    \end{equation}
    To estimate the first term in \eqref{Estimate2 f}, we use H\"older's inequality (Theorem \ref{Holder inequality}) to get
    \begin{equation}
        \Vert m(\cdot)u(\tau,\cdot)\Vert_{L^2} \leq \Vert m\Vert_{L^p} \Vert u(\tau,\cdot)\Vert_{L^q},
    \end{equation}
    for $1<p,q<\infty$, satisfying $\frac{1}{p}+\frac{1}{q}=\frac{1}{2}$, and we choose $q=\frac{2d}{d-2s}$ and consequently $p=\frac{d}{s}$, in order to get (from Proposition \ref{Prop. Sobolev estimate})
    \begin{equation}
        \Vert u(t,\cdot)\Vert_{L^q} \lesssim \Vert (-\Delta)^{\frac{s}{2}}u(t,\cdot)\Vert_{L^2},
    \end{equation}
   and thus
   \begin{equation}\label{Estimate au}
        \Vert m(\cdot)u(\tau,\cdot)\Vert_{L^2} \lesssim \Vert m\Vert_{L^{\frac{d}{s}}} \Vert (-\Delta)^{\frac{s}{2}}u(\tau,\cdot)\Vert_{L^2},
    \end{equation}
    for all $t\in[0,T]$. Using the estimate \eqref{En4 aux. q neq 2 d}, we arrive at
    \begin{align}\label{Estimate2 au}
        \Vert m(\cdot)u(\tau,\cdot)\Vert_{L^2} & \lesssim \Vert m\Vert_{L^{\frac{d}{s}}} \Big[\Vert u_{1}\Vert_{L^2} + \Vert g\Vert_{L^{\frac{d}{2s}}}^{\frac{1}{2}}\Vert u_0\Vert_{H^{2s}} + \Vert m\Vert_{L^{\frac{d}{2s}}}^{\frac{1}{2}} \Vert u_0 \Vert_{H^{2s}} \Big].
    \end{align}
     For the second term in \eqref{Estimate2 f}, we argue as above, to get
    \begin{equation}\label{Estimate bu_t}
     \Vert b(\cdot)u_t(\tau,\cdot)\Vert_{L^2} \lesssim \Vert b\Vert_{L^{\frac{d}{s}}} \Vert (-\Delta)^{\frac{s}{2}}u_t(\tau,\cdot)\Vert_{L^2},
    \end{equation}
    for all $t\in[0,T]$. We need now to estimate $\Vert (-\Delta)^{\frac{s}{2}}u_t(\tau,\cdot)\Vert_{L^2}$. For this, we note that if $u$ solves the Cauchy problem
    \begin{equation}
        \bigg\lbrace
        \begin{array}{l}
        u_{tt}(t,x) + D_g^{s}u(t,x) + m(x)u(t,x) + b(x)u_{t}(t,x)=0, \,\, (t,x)\in [0,T]\times\mathbb{R}^d,\\
        u(0,x)=u_{0}(x),\quad u_{t}(0,x)=u_{1}(x), \quad x\in\mathbb{R}^d,
        \end{array}
    \end{equation}
    then $u_t$ solves
    \begin{equation}\label{Equation in u_t}
        \bigg\lbrace
        \begin{array}{l}
        (u_t)_{tt}(t,x) + D_g^{s}u_t(t,x) + m(x)u_t(t,x) + b(x)(u_t)_{t}(t,x)=0, \,\, (t,x)\in [0,T]\times\mathbb{R}^d,\\
        u_t(0,x)=u_{1}(x),\quad (u_t)_{t}(0,x)=-D_g^{s}u_0(x) - m(x)u_0(x) - b(x)u_1(x), \quad x\in\mathbb{R}^d.
        \end{array}
    \end{equation}
    Thanks to \eqref{Estimate au} and \eqref{Estimate bu_t} one has
    \begin{equation}
        \Vert m(\cdot)u_0(\cdot)\Vert_{L^2} \lesssim \Vert m\Vert_{L^{\frac{d}{s}}} \Vert u_0\Vert_{H^s},\quad \Vert b(\cdot)u_1(\cdot)\Vert_{L^2} \lesssim \Vert b\Vert_{L^{\frac{d}{s}}} \Vert u_1\Vert_{H^s}.
   \end{equation}
    The estimate for $\Vert (-\Delta)^{\frac{s}{2}}u_t(\tau,\cdot)\Vert_{L^2}$ follows by using \eqref{Energy1 aux2} applied to the problem \eqref{Equation in u_t}, to get
    \begin{align}\label{Estimate Delta u_t}
        \Vert (-\Delta)^{\frac{s}{2}}u_t(\tau,\cdot)\Vert_{L^2} & \lesssim \Vert u_{tt}(0,.)\Vert_{L^2} + \Vert g\Vert_{L^{\frac{d}{2s}}}^{\frac{1}{2}}\Vert u_1\Vert_{H^{2s}} + \Vert m\Vert_{L^{\frac{d}{2s}}}^{\frac{1}{2}} \Vert u_1 \Vert_{H^{2s}} \\
        & \lesssim \Vert (-\Delta)^{\frac{s}{2}}\big(g(\cdot)(-\Delta)^{\frac{s}{2}}u_0 \big) \Vert_{L^2} +\Vert m(\cdot) u_0  \Vert_{L^2} + \Vert b (\cdot) u_1  \Vert_{L^2} \nonumber \\ &\quad + \Vert g\Vert_{L^{\frac{d}{2s}}}^{\frac{1}{2}}\Vert u_1\Vert_{H^{2s}} + \Vert m\Vert_{L^{\frac{d}{2s}}}^{\frac{1}{2}} \Vert u_1 \Vert_{H^{2s}}.\nonumber  
    \end{align}
    To estimate the term $\Vert (-\Delta)^{\frac{s}{2}}\big(g(\cdot)(-\Delta)^{\frac{s}{2}}u_0 (\cdot)\big) \Vert_{L^2} $ in \eqref{Estimate Delta u_t}, we first use the Kato-Ponce inequality (see Proposition \ref{The Kato-Ponce Inequality}) applied for $r=2$. We get
    \begin{align}
      \Vert (-\Delta)^{\frac{s}{2}}\big(g(\cdot)&(-\Delta)^{\frac{s}{2}}u_0 (\cdot)\big) \Vert_{L^2} \\ &\lesssim \Vert g\Vert_{L^{p_1}}\Vert(-\Delta)^s u_0\Vert_{L^{q_1}}+\Vert(-\Delta)^{\frac{s}{2}}g(\cdot)\Vert_{L^{p_2}} \Vert (-\Delta)^{\frac{s}{2}}u_0\Vert_{L^{q_2}},\nonumber
    \end{align}
    with $\frac{1}{2}=\frac{1}{p_1}+\frac{1}{q_1}=\frac{1}{p_2}+\frac{1}{q_2}$. Now, we choose $q_1=q_2=\frac{2d}{d-2s}$ and consequently $p_1=p_2=\frac{d}{s}$, to get
    \begin{align}\label{the kato-ponce inequality}
      \Vert (-\Delta)^{\frac{s}{2}}\big(g(\cdot)&(-\Delta)^{\frac{s}{2}}u_0 (\cdot)\big) \Vert_{L^2} \\ &\lesssim \Vert g\Vert_{L^{\frac{d}{s}}}\Vert(-\Delta)^s u_0\Vert_{L^{\frac{2d}{d-2s}}}+\Vert(-\Delta)^{\frac{s}{2}}g(\cdot)\Vert_{L^{\frac{d}{s}}} \Vert (-\Delta)^{\frac{s}{2}}u_0\Vert_{L^{\frac{2d}{d-2s}}}\nonumber  \\ &\lesssim \Vert g\Vert_{L^{\frac{d}{s}}}\Vert(-\Delta)^{\frac{3s}{2}} u_0\Vert_{L^2}+\Vert(-\Delta)^{\frac{s}{2}}g(\cdot)\Vert_{L^{\frac{d}{s}}} \Vert (-\Delta)^s u_0\Vert_{L^2} \nonumber \\
      &\lesssim \Vert g\Vert_{L^{\frac{d}{s}}}\Vert u_0\Vert_{H^{3s}}+\Vert(-\Delta)^{\frac{s}{2}}g(\cdot)\Vert_{L^{\frac{d}{s}}} \Vert  u_0\Vert_{H^{2s}}.\nonumber
    \end{align}
    By substituting \eqref{the kato-ponce inequality} into \eqref{Estimate Delta u_t}, we get
    \begin{align}\label{Estimate Delta u_t 1}
        \Vert (-\Delta)^{\frac{s}{2}}u_t(\tau,\cdot)\Vert_{L^2}  & \lesssim \Vert (-\Delta)^{\frac{s}{2}}\big(g(\cdot)(-\Delta)^{\frac{s}{2}}u_0 \big) \Vert_{L^2} +\Vert m\Vert_{L^{\frac{d}{s}}} \Vert u_0  \Vert_{L^{\frac{2d}{d-2s}}} \\ &\quad + \Vert b \Vert_{L^{\frac{d}{s}}} \Vert u_1 \Vert_{L^{\frac{2d}{d-2s}}}  + \Vert g\Vert_{L^{\frac{d}{2s}}}^{\frac{1}{2}}\Vert u_1\Vert_{H^{2s}} + \Vert m\Vert_{L^{\frac{d}{2s}}}^{\frac{1}{2}} \Vert u_1 \Vert_{H^{2s}} \nonumber \\ & \lesssim \Vert g\Vert_{L^{\frac{d}{s}}}\Vert u_0\Vert_{H^{3s}}+\Vert(-\Delta)^{\frac{s}{2}}g(\cdot)\Vert_{L^{\frac{d}{s}}} \Vert  u_0\Vert_{H^{2s}}+\Vert m\Vert_{L^{\frac{d}{s}}} \Vert u_0  \Vert_{H^s} \nonumber \\ &\quad + \Vert b \Vert_{L^{\frac{d}{s}}} \Vert u_1 \Vert_{H^s} + \Vert g\Vert_{L^{\frac{d}{2s}}}^{\frac{1}{2}}\Vert u_1\Vert_{H^{2s}} + \Vert m\Vert_{L^{\frac{d}{2s}}}^{\frac{1}{2}} \Vert u_1 \Vert_{H^{2s}}, \nonumber 
    \end{align}
and by substituting \eqref{Estimate Delta u_t 1} into \eqref{Estimate bu_t}, we obtain
    \begin{align}\label{Estimate2 bu_t}
        \Vert b(\cdot)u_t(\tau,\cdot)\Vert_{L^2} & \lesssim \Vert b\Vert_{L^{\frac{d}{s}}}\big[\Vert g\Vert_{L^{\frac{d}{s}}}\Vert u_0\Vert_{H^{3s}}+\Vert(-\Delta)^{\frac{s}{2}}g(\cdot)\Vert_{L^{\frac{d}{s}}} \Vert  u_0\Vert_{H^{2s}} \\ &+\Vert m\Vert_{L^{\frac{d}{s}}} \Vert u_0  \Vert_{H^s} + \Vert b \Vert_{L^{\frac{d}{s}}} \Vert u_1 \Vert_{H^s}  + \Vert g\Vert_{L^{\frac{d}{2s}}}^{\frac{1}{2}}\Vert u_1\Vert_{H^{2s}} + \Vert m\Vert_{L^{\frac{d}{2s}}}^{\frac{1}{2}} \Vert u_1 \Vert_{H^{2s}} \big] \nonumber \\ & \lesssim \Vert b\Vert_{L^{\frac{d}{s}}}\big[\Vert g\Vert_{W^{s,\frac{d}{s}}}\Vert u_0\Vert_{H^{3s}}+\Vert m\Vert_{L^{\frac{d}{s}}} \Vert u_0  \Vert_{H^s} + \Vert b \Vert_{L^{\frac{d}{s}}} \Vert u_1 \Vert_{H^s} \nonumber \\ &\quad+ \Vert g\Vert_{L^{\frac{d}{2s}}}^{\frac{1}{2}}\Vert u_1\Vert_{H^{2s}} + \Vert m\Vert_{L^{\frac{d}{2s}}}^{\frac{1}{2}} \Vert u_1 \Vert_{H^{2s}} \big]\nonumber,
    \end{align}
    where we used that:
    $\Vert g\Vert_{W^{s,\frac{d}{s}}}=\Vert g\Vert_{L^{\frac{d}{s}}}+\Vert (-\Delta)^{\frac{s}{2}}g(\cdot)\Vert_{L^{\frac{d}{s}}}$. The estimate for $\Vert \delta (\tau,\cdot)\Vert_{L^2}$ follows from  \eqref{Estimate f} and \eqref{Estimate2 au} with \eqref{Estimate2 bu_t}, yielding
    \begin{align} \label{Estimate3 f}
        \Vert \delta (\tau,\cdot)\Vert_{L^2} &\lesssim\Vert m\Vert_{L^{\frac{d}{s}}} \big[\Vert u_{1}\Vert_{L^2} + \Vert g\Vert_{L^{\frac{d}{2s}}}^{\frac{1}{2}}\Vert u_0\Vert_{H^{2s}} + \Vert m\Vert_{L^{\frac{d}{2s}}}^{\frac{1}{2}} \Vert u_0 \Vert_{H^{2s}}\big] \\ &\quad+\Vert b\Vert_{L^{\frac{d}{s}}}\big[\Vert g\Vert_{W^{s,\frac{d}{s}}}\Vert u_0\Vert_{H^{3s}}+\Vert m\Vert_{L^{\frac{d}{s}}} \Vert u_0  \Vert_{H^s} + \Vert b \Vert_{L^{\frac{d}{s}}} \Vert u_1 \Vert_{H^s} \nonumber \\ &\quad+ \Vert g\Vert_{L^{\frac{d}{2s}}}^{\frac{1}{2}}\Vert u_1\Vert_{H^{2s}} + \Vert m\Vert_{L^{\frac{d}{2s}}}^{\frac{1}{2}} \Vert u_1 \Vert_{H^{2s}} \big] \nonumber  \\
        &\lesssim \Vert m\Vert_{L^{\frac{d}{s}}}  \Vert u_1\Vert_{L^2} +\Vert m\Vert_{L^{\frac{d}{s}}} \Vert g\Vert_{L^{\frac{d}{2s}}}^{\frac{1}{2}} \Vert u_0\Vert_{H^{2s}}+\Vert m\Vert_{L^{\frac{d}{s}}}\Vert m\Vert_{L^{\frac{d}{2s}}}^{\frac{1}{2}} \Vert u_0\Vert_{H^{2s}}  \nonumber \\ &\quad+\Vert b\Vert_{L^{\frac{d}{s}}}  \Vert g\Vert_{W^{s,\frac{d}{s}}} \Vert u_0\Vert_{H^{3s}}+\Vert b\Vert_{L^{\frac{d}{s}}} \Vert m\Vert_{L^{\frac{d}{s}}} \Vert u_0\Vert_{H^{s}} +\Vert b\Vert_{L^{\frac{d}{s}}}^{2}\Vert u_1\Vert_{H^{s}} \nonumber \\ &\quad+\Vert b\Vert_{L^{\frac{d}{s}}}\Vert g\Vert_{L^{\frac{d}{2s}}}^{\frac{1}{2}} \Vert u_1\Vert_{H^{2s}} +\Vert b\Vert_{L^{\frac{d}{s}}} \Vert m\Vert_{L^{\frac{d}{2s}}}^{\frac{1}{2}} \Vert u_1\Vert_{H^{2s}} \nonumber \\
        &\lesssim \big( 1+\Vert m\Vert_{L^{\frac{d}{s}}}+\Vert b\Vert_{L^{\frac{d}{s}}} \big ) \bigg [ \Vert m\Vert_{L^{\frac{d}{s}}} +\Vert b\Vert_{L^{\frac{d}{s}}} +\Vert g\Vert_{L^{\frac{d}{2s}}}^{\frac{1}{2}} +\Vert m\Vert_{L^{\frac{d}{2s}}}^{\frac{1}{2}} \nonumber \\ &\quad+\Vert b\Vert_{L^{\frac{d}{s}}} \Vert g\Vert_{W^{s,\frac{d}{s}}} \bigg] \bigg[ \Vert u_0\Vert_{H^{3s}} +\Vert u_1\Vert_{H^{2s}}\bigg]. \nonumber
    \end{align}
   Our estimate for the solution $u$ follows by substituting \eqref{Estimate3 f} into \eqref{estimate u }. We arrive at
\begin{align} 
        \Vert u(t,\cdot)\Vert_{L^2} &\lesssim \Vert u_0\Vert_{L^2} +\Vert u_{1}\Vert_{L^2} + \Vert g\Vert_{L^{\frac{d}{2s}}}^{\frac{1}{2}}\Vert u_0\Vert_{H^{2s}}+\big( 1+\Vert m\Vert_{L^{\frac{d}{s}}}+\Vert b\Vert_{L^{\frac{d}{s}}} \big ) \\ &\quad \times \big [ \Vert m\Vert_{L^{\frac{d}{s}}} +\Vert b\Vert_{L^{\frac{d}{s}}} +\Vert g\Vert_{L^{\frac{d}{2s}}}^{\frac{1}{2}} +\Vert m\Vert_{L^{\frac{d}{2s}}}^{\frac{1}{2}} +\Vert b\Vert_{L^{\frac{d}{s}}} \Vert g\Vert_{W^{s,\frac{d}{s}}} \big]\nonumber\\ & \quad \times\big[ \Vert u_0\Vert_{H^{3s}} +\Vert u_1\Vert_{H^{2s}}\big] \nonumber \\
        &\lesssim \big( 1+\Vert m\Vert_{L^{\frac{d}{s}}}+\Vert b\Vert_{L^{\frac{d}{s}}} \big ) \bigg [ 1+ \Vert m\Vert_{L^{\frac{d}{s}}} +\Vert b\Vert_{L^{\frac{d}{s}}} +\Vert g\Vert_{L^{\frac{d}{2s}}}^{\frac{1}{2}} +\Vert m\Vert_{L^{\frac{d}{2s}}}^{\frac{1}{2}} \nonumber \\ &+\Vert b\Vert_{L^{\frac{d}{s}}} \Vert g\Vert_{W^{s,\frac{d}{s}}} \bigg] \bigg[ \Vert u_0\Vert_{H^{3s}} +\Vert u_1\Vert_{H^{2s}}\bigg] \nonumber .
    \end{align}
Let us now consider the case when $g\equiv 1$. The desired estimate is a direct consequence of Lemma 2.5 in \cite{RST24}. We recall it here:
\begin{align}\label{Energy estimate1_Lemma2.5}
         \big\{\Vert u(t,\cdot)\Vert_{L^2} +& \Vert (-\Delta)^{\frac{s}{2}}u(t,\cdot)\Vert_{L^2} + \Vert (-\Delta)^{s}u(t,\cdot)\Vert_{L^2} + \Vert u_t(t,\cdot)\Vert_{L^2} + \Vert (-\Delta)^{\frac{s}{2}}u_t(t,\cdot)\Vert_{L^2}\big\}\nonumber\\ &\lesssim
         \Big(1+\Vert m\Vert_{L^{\frac{d}{s}}}\Big)\Big(1+\Vert m\Vert_{L^{\frac{d}{2s}}}\Big)\Big(1+\Vert b\Vert_{L^{\frac{d}{s}}}\Big)^2\bigg[\Vert u_0\Vert_{H^{2s}} + \Vert u_{1}\Vert_{H^s}\bigg],
    \end{align}
     uniformly in $t\in [0,T]$. This completes the proof.
\end{proof}

\subsection{Non-homogeneous equation case}
Here, we prove estimates to the solution of \eqref{Equation intro} in the case when $f\not\equiv 0$.

\begin{thm} \label{thm non-homogeneous}
    Let $g\in L^{\infty}(\mathbb{R}^d)$ be positive, $m,b\in L^{\infty}(\mathbb{R}^d)$ be non-negative  and $f \in C([0,T],L^{2}(\mathbb{R}^d))$. Assume that $(u_0,u_1) \in H^{s}(\mathbb{R}^d)\times L^{2}(\mathbb{R}^d)$. Then, the unique solution $u\in C([0,T];H^{s}(\mathbb{R}^d))\cap C^1([0,T];L^{2}(\mathbb{R}^d))$ to the Cauchy problem \eqref{Equation intro} satisfies the estimate
    \begin{equation}\label{Energy estimate non}
         \Vert u(t,\cdot)\Vert_1 \lesssim \big(1 + \Vert g \Vert_{L^{\infty}} + \Vert m\Vert_{L^{\infty}}\big) \big(1+ \Vert b \Vert_{L^{\infty}} +  \Vert m \Vert_{L^{\infty}}^{\frac{1}{2}}\big) \big[ \Vert u_0\Vert_{H^s} + \Vert u_1\Vert_{L^2} +\Vert f(t,\cdot)\Vert_{L^2} \big]  ,
    \end{equation}
     for all $t\in [0,T]$.
     
     Moreover, if $d>2s$, then for $g\in L^{\frac{d}{2s}}(\mathbb{R}^d)\cap L^{\frac{d}{s}}(\mathbb{R}^d)\cap W^{s,\frac{d}{s}}(\mathbb{R}^d)$, $m\in L^{\frac{d}{2s}}(\mathbb{R}^d)\cap L^{\frac{d}{s}}(\mathbb{R}^d)$, $b\in L^{\frac{d}{s}}(\mathbb{R}^d)$, $f\in C([0,T],L^{2}(\mathbb{R}^d))$, and $u_0 \in H^{3s}(\mathbb{R}^d)$, $u_1 \in H^{2s}(\mathbb{R}^d)$, there is a unique solution $u\in C\big([0,T]; H^{s}(\mathbb{R}^d)\cap C^{1}\big([0,T]; H^{s}(\mathbb{R}^d)\big)$ to \eqref{Equation intro} that satisfies the estimate
     \begin{align}\label{Energy1 aux1+1 non}
        \Vert u(t,\cdot)\Vert_3 &\lesssim \big( 1+\Vert m\Vert_{L^{\frac{d}{s}}}+\Vert b\Vert_{L^{\frac{d}{s}}} \big ) \Big[ 1+ \Vert m\Vert_{L^{\frac{d}{s}}} +\Vert b\Vert_{L^{\frac{d}{s}}} +\Vert g\Vert_{L^{\frac{d}{2s}}}^{\frac{1}{2}} +\Vert m\Vert_{L^{\frac{d}{2s}}}^{\frac{1}{2}} \\ &\quad+\Vert b\Vert_{L^{\frac{d}{s}}} \Vert g\Vert_{W^{s,\frac{d}{s}}} \Big]  \Big[ \Vert u_0\Vert_{H^{3s}} +\Vert u_1\Vert_{H^{2s}} +\Vert f(t,\cdot)\Vert_{L^2}\Big] \nonumber,
    \end{align}
   uniformly in $t\in [0,T]$.
\end{thm}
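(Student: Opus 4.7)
The plan is to reduce the non-homogeneous Cauchy problem \eqref{Equation intro} to the homogeneous one already handled in Lemma \ref{Lemma1}, using Duhamel's principle (Theorem \ref{Thm Duhamel}) to split off the source term. Setting $\lambda(x):=b(x)$ and $L:=D_g^{s}+m(\cdot)$, the equation \eqref{Equation intro} falls exactly under Theorem \ref{Thm Duhamel}, which produces the representation
\begin{equation*}
    u(t,x) = w(t,x) + \int_0^t v(t,x;\tau)\,\d \tau,
\end{equation*}
where $w$ solves \eqref{Equation intro} with $f\equiv 0$ and initial data $(u_0,u_1)$, while for each $\tau\in[0,T]$, $v(\cdot,\cdot;\tau)$ solves the same homogeneous problem starting at time $\tau$ with data $(0,f(\tau,\cdot))$.

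With this representation in hand, I would take the relevant norm ($\Vert\cdot\Vert_{1}$ for \eqref{Energy estimate non}, $\Vert\cdot\Vert_{3}$ for \eqref{Energy1 aux1+1 non}), use the triangle inequality, and apply Minkowski's integral inequality to obtain
\begin{equation*}
    \Vert u(t,\cdot)\Vert_{j} \leq \Vert w(t,\cdot)\Vert_{j} + \int_{0}^{t} \Vert v(t,\cdot;\tau)\Vert_{j}\,\d \tau,
\end{equation*}
for the appropriate $j\in\{1,3\}$. The first term would be controlled directly by the corresponding estimate of Lemma \ref{Lemma1}, yielding the $[\Vert u_0\Vert_{H^s}+\Vert u_1\Vert_{L^2}]$ (respectively $[\Vert u_0\Vert_{H^{3s}}+\Vert u_1\Vert_{H^{2s}}]$) factor with exactly the coefficient prefactor appearing in \eqref{Energy estimate} (resp.\ \eqref{Energy1 aux1+1}). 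For the integrand, Lemma \ref{Lemma1} would be applied to $v(\cdot,\cdot;\tau)$, whose initial displacement vanishes, so that the data contribution collapses to the $f(\tau,\cdot)$ terms only.

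In the first ($L^{\infty}$-coefficient) case, this immediately gives $\Vert v(t,\cdot;\tau)\Vert_{1}\lesssim C(g,m,b)\Vert f(\tau,\cdot)\Vert_{L^2}$. Using $f\in C([0,T],L^{2})$, the $\tau$-integral would be majorized by $T\sup_{\tau\in[0,T]}\Vert f(\tau,\cdot)\Vert_{L^2}$ and absorbed into the $\lesssim$ constant, producing the $\Vert f(t,\cdot)\Vert_{L^2}$ summand of \eqref{Energy estimate non}; combined with the bound for $w$ this is exactly \eqref{Energy estimate non}.

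The delicate step, which I expect to be the main source of bookkeeping, is carrying out the same plan in the second case: applying \eqref{Energy1 aux1+1} to $v(\cdot,\cdot;\tau)$ nominally wants the initial velocity to sit in $H^{2s}$, while only $f\in C([0,T],L^{2})$ is available. The way around is to rerun the proof of \eqref{Energy1 aux1+1} tailored to $v$: since $v(\tau,\cdot;\tau)=0$, the conservation-law step of Lemma \ref{Lemma1} already controls the $\Vert\cdot\Vert_{1}$-pieces of $v$ purely in terms of $\Vert f(\tau,\cdot)\Vert_{L^2}$, and the remaining $\Vert(-\Delta)^{s/2}v_t(t,\cdot;\tau)\Vert_{L^2}$ term would be recovered by reapplying the same Kato--Ponce-and-Sobolev argument (as in the derivation of \eqref{Estimate Delta u_t 1}) to the companion Cauchy problem for $v_t$, whose data at time $\tau$ reduce to $(f(\tau,\cdot),-b(\cdot)f(\tau,\cdot))$. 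Once this bookkeeping is completed, the coefficient factor multiplying $\Vert f(\tau,\cdot)\Vert_{L^{2}}$ will have exactly the form displayed in \eqref{Energy1 aux1+1 non}, and $\tau$-integration together with the bound for $w$ delivers \eqref{Energy1 aux1+1 non}.
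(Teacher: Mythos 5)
Your approach — Duhamel's principle to reduce to the homogeneous case treated in Lemma~\ref{Lemma1} — is the same one the paper uses. For the first estimate \eqref{Energy estimate non} the argument is correct: with $v(\cdot,\cdot;\tau)$ having zero initial displacement and initial velocity $f(\tau,\cdot)\in L^2(\mathbb{R}^d)$, the estimate \eqref{Energy estimate} applies directly, the $u_0$-contribution vanishes, and the remaining $\Vert f(\tau,\cdot)\Vert_{L^2}$ factor, integrated in $\tau$ over $[0,T]$, produces the claimed bound. (You apply Minkowski's inequality to the full norm $\Vert\cdot\Vert_1$, which is actually a bit cleaner than the paper, which writes out only the $L^2$ component explicitly.)

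For the second estimate you have correctly identified a real difficulty: applying \eqref{Energy1 aux1+1} to $v(\cdot,\cdot;\tau)$ asks for $f(\tau,\cdot)\in H^{2s}(\mathbb{R}^d)$, while the hypothesis gives only $f\in C([0,T];L^2(\mathbb{R}^d))$. However, the repair you sketch does not close this gap. The companion Cauchy problem for $v_t$ starts at time $\tau$ from data $\big(f(\tau,\cdot),\,-b(\cdot)f(\tau,\cdot)\big)$; note that since $v(\tau,\cdot;\tau)=0$, the Kato--Ponce term $\Vert(-\Delta)^{\frac{s}{2}}\big(g(\cdot)(-\Delta)^{\frac{s}{2}}v(\tau,\cdot;\tau)\big)\Vert_{L^2}$ drops out, so the Kato--Ponce step you invoke is not where the trouble lies. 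The trouble is that the conservation law (equivalently, the auxiliary estimate \eqref{Energy1 aux2}) applied to the companion problem produces the term $\Vert g^{\frac{1}{2}}(\cdot)(-\Delta)^{\frac{s}{2}}f(\tau,\cdot)\Vert_{L^2}$ from the initial displacement $f(\tau,\cdot)$, and bounding this via H\"older and the fractional Sobolev inequality again yields $\Vert g\Vert_{L^{\frac{d}{2s}}}^{\frac{1}{2}}\Vert f(\tau,\cdot)\Vert_{H^{2s}}$ — exactly the quantity you were trying to avoid. This is not a matter of bookkeeping but a genuine regularity obstruction: the norm $\Vert\cdot\Vert_3$ contains $\Vert(-\Delta)^{\frac{s}{2}}u_t(t,\cdot)\Vert_{L^2}$, and at $t=\tau$ one has $(-\Delta)^{\frac{s}{2}}V_t(\tau,\cdot;\tau)=(-\Delta)^{\frac{s}{2}}f(\tau,\cdot)$, which is not square-integrable when $f(\tau,\cdot)$ is merely $L^2$. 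For the record, the paper's own proof of this second case simply asserts the conclusion ``by similar arguments and by using the estimate \eqref{Energy1 aux1+1}'' without addressing the point you raise, so the issue is present in the source as well; a complete argument would require either strengthening the assumption to $f\in C([0,T];H^{2s}(\mathbb{R}^d))$ (with $\Vert f(t,\cdot)\Vert_{H^{2s}}$ on the right-hand side of \eqref{Energy1 aux1+1 non}), or weakening the target norm to one not containing $\Vert(-\Delta)^{\frac{s}{2}}u_t\Vert_{L^2}$.
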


\begin{proof}
    We apply Duhamel's principle. The solution to the non-homogeneous problem  \eqref{Equation intro} is given by
    \begin{equation}\label{solution non}
u(t,x)=W(t,x)+\int_{0}^{T}V(t,x;\tau)d\tau,
\end{equation}
where $W(t,x)$ is the solution to the homogeneous problem
    \begin{equation*}\label{Equation1 non}
        \bigg\{
        \begin{array}{l}
        W_{tt}(t,x) +D_g^s W(t,x) + m(x)W(t,x)+b(x)W_t(t,x)=0,\quad (t,x)\in [0,T]\times\mathbb{R}^d, \\
        W(0,x)=u_{0}(x),\quad W_{t}(0,x)=u_{1}(x),\quad x\in \mathbb R^d,
        \end{array}
    \end{equation*}
    and $V(t,x;\tau)$ solves the problem
\begin{equation*}
    \left\lbrace
    \begin{array}{l}
    V_{tt}(t,x;\tau)+D_g^s V(t,x;\tau) +m(x)V(t,x;\tau)+b(x)V_t(t,x;\tau)=0,\\ \hfill(t,x)\in\left(\tau,T\right]\times \mathbb{R}^{d},\\
    V(\tau,x;\tau)=0, ~V_{t}(\tau,x;\tau)=f(\tau,x).
    \end{array}
    \right.
\end{equation*}
By taking $L^2$-norm in \eqref{solution non}, we get
\begin{equation}
    \Vert u(t,\cdot)\Vert_{L^2}  \leq \Vert W_{\varepsilon}(t,\cdot)\Vert_{L^2} + \int_{0}^{T}\Vert V_{\varepsilon}(t,\cdot;s)\Vert_{L^2} ds,\label{Duhamel1 solution estimate non}    
\end{equation}
  where we used  Minkowski's integral inequality. 
  We have
    \begin{equation*}
        \Vert W_{\varepsilon}(t,\cdot)\Vert_{L^2} \lesssim \big(1 + \Vert g \Vert_{L^{\infty}} + \Vert m\Vert_{L^{\infty}}\big) \big(1+ \Vert b \Vert_{L^{\infty}} +  \Vert m \Vert_{L^{\infty}}^{\frac{1}{2}}\big) \big[ \Vert u_0\Vert_{H^s} + \Vert u_1\Vert_{L^2}  \big],
    \end{equation*}
    and
    \begin{equation*}
        \Vert V_{\varepsilon}(t,\cdot;s)\Vert_{L^2} \lesssim  \big(1 + \Vert g \Vert_{L^{\infty}} + \Vert m\Vert_{L^{\infty}}\big) \big(1+ \Vert b \Vert_{L^{\infty}} +  \Vert m \Vert_{L^{\infty}}^{\frac{1}{2}}\big) \Vert f(t,\cdot)\Vert_{L^2}.
    \end{equation*} 
    The last estimate follows from \eqref{Energy estimate}. Thus, we get
    \begin{align}
        \Vert u(t,\cdot )\Vert_{L^2} 
        &\lesssim \big(1 + \Vert g \Vert_{L^{\infty}} + \Vert m\Vert_{L^{\infty}}\big) \big(1+ \Vert b \Vert_{L^{\infty}} +  \Vert m \Vert_{L^{\infty}}^{\frac{1}{2}}\big) \big[ \Vert u_0\Vert_{H^s} + \Vert u_1\Vert_{L^2}  \big] \nonumber \\ &\quad+ \big(1 + \Vert g \Vert_{L^{\infty}} + \Vert m\Vert_{L^{\infty}}\big) \big(1+ \Vert b \Vert_{L^{\infty}} +  \Vert m \Vert_{L^{\infty}}^{\frac{1}{2}}\big) \Vert f(t,\cdot)\Vert_{L^2}  \nonumber \\
        & \lesssim \big(1 + \Vert g \Vert_{L^{\infty}} + \Vert m\Vert_{L^{\infty}}\big) \big(1+ \Vert b \Vert_{L^{\infty}} +  \Vert m \Vert_{L^{\infty}}^{\frac{1}{2}}\big) \big[ \Vert u_0\Vert_{H^s} + \Vert u_1\Vert_{L^2} \nonumber \\ &\quad+\Vert f(t,\cdot)\Vert_{L^2}  \big] \nonumber .
    \end{align}
     Now, let $d>2s$. By similar arguments and by using the estimate \eqref{Energy1 aux1+1} we easily prove that:
    \begin{align}
        \Vert u(t,\cdot )\Vert_{L^2} &\lesssim  \big( 1+\Vert m\Vert_{L^{\frac{d}{s}}}+\Vert b\Vert_{L^{\frac{d}{s}}} \big ) \big [ 1+ \Vert m\Vert_{L^{\frac{d}{s}}} +\Vert b\Vert_{L^{\frac{d}{s}}} +\Vert g\Vert_{L^{\frac{d}{2s}}}^{\frac{1}{2}} +\Vert m\Vert_{L^{\frac{d}{2s}}}^{\frac{1}{2}} \nonumber \\ & \quad+\Vert b\Vert_{L^{\frac{d}{s}}} \Vert g\Vert_{W^{s,\frac{d}{s}}} \big] \big[ \Vert u_0\Vert_{H^{3s}} +\Vert u_1\Vert_{H^{2s}} \big] +\big( 1+\Vert m\Vert_{L^{\frac{d}{s}}}+\Vert b\Vert_{L^{\frac{d}{s}}} \big ) \nonumber \\ &\quad \times \big [ 1+ \Vert m\Vert_{L^{\frac{d}{s}}} +\Vert b\Vert_{L^{\frac{d}{s}}} +\Vert g\Vert_{L^{\frac{d}{2s}}}^{\frac{1}{2}} +\Vert m\Vert_{L^{\frac{d}{2s}}}^{\frac{1}{2}} +\Vert b\Vert_{L^{\frac{d}{s}}} \Vert g\Vert_{W^{s,\frac{d}{s}}} \big] \Vert f(t,\cdot)\Vert_{L^{2}}  \nonumber \\ &\lesssim \big( 1+\Vert m\Vert_{L^{\frac{d}{s}}}+\Vert b\Vert_{L^{\frac{d}{s}}} \big ) \big [ 1+ \Vert m\Vert_{L^{\frac{d}{s}}} +\Vert b\Vert_{L^{\frac{d}{s}}} +\Vert g\Vert_{L^{\frac{d}{2s}}}^{\frac{1}{2}} +\Vert m\Vert_{L^{\frac{d}{2s}}}^{\frac{1}{2}} \nonumber \\ &\quad +\Vert b\Vert_{L^{\frac{d}{s}}} \Vert g\Vert_{W^{s,\frac{d}{s}}} \big] \big[ \Vert u_0\Vert_{H^{3s}} +\Vert u_1\Vert_{H^{2s}} +\Vert f(t,\cdot)\Vert_{L^2} \big], \nonumber
    \end{align}
    ending the proof.    
\end{proof}
In the following theorem, we prove estimates to the solution of \eqref{Equation intro} in the special case when $f\not\equiv 0$ and $g\equiv 1$.

\begin{thm} \label{thm non-homogeneous 1}
    Let $g\equiv 1$, $m,b\in L^{\infty}(\mathbb{R}^d)$ be non-negative  and $f \in C([0,T],L^{2}(\mathbb{R}^d))$. Suppose that $(u_0,u_1) \in H^{s}(\mathbb{R}^d)\times L^{2}(\mathbb{R}^d)$. Then, the unique solution\\ $u\in C([0,T];H^{s}(\mathbb{R}^d))\cap C^1([0,T];L^{2}(\mathbb{R}^d))$ to the Cauchy problem \eqref{Equation intro} satisfies the estimate
    \begin{equation}\label{Energy estimate non 1}
         \Vert u(t,\cdot)\Vert_1 \lesssim \big(2 + \Vert m\Vert_{L^{\infty}}\big) \big(1+ \Vert b \Vert_{L^{\infty}} +  \Vert m \Vert_{L^{\infty}}^{\frac{1}{2}}\big) \big[ \Vert u_0\Vert_{H^s} + \Vert u_1\Vert_{L^2} +\Vert f(t,\cdot)\Vert_{L^2} \big]  ,
    \end{equation}
     for all $t\in [0,T]$.
     
      Moreover, if $d>2s$, then for $g\equiv 1 $, $m\in L^{\frac{d}{2s}}(\mathbb{R}^d)\cap L^{\frac{d}{s}}(\mathbb{R}^d)$, $b\in L^{\frac{d}{s}}(\mathbb{R}^d)$, $f\in C([0,T],L^{2}(\mathbb{R}^d))$, and $u_0 \in H^{2s}(\mathbb{R}^d)$, $u_1 \in H^{s}(\mathbb{R}^d)$, there is a unique solution $u\in C\big([0,T]; H^{2s}(\mathbb{R}^d)\cap C^{1}\big([0,T]; H^{s}(\mathbb{R}^d)\big)$ to \eqref{Equation intro} that satisfies the estimate
     \begin{align}\label{Energy1 aux1+1 non 1}
        \Vert u(t,\cdot)\Vert_2 \lesssim \big(1+\Vert m\Vert_{L^{\frac{d}{s}}}\big)\big(1+\Vert m\Vert_{L^{\frac{d}{2s}}}\big)&\big(1+\Vert b\Vert_{L^{\frac{d}{s}}}\big)^2 \big[\Vert u_0\Vert_{H^{2s}} \\ &\quad+ \Vert u_{1}\Vert_{H^s}  +\Vert f(t,\cdot)\Vert_{L^2}\big],\nonumber
    \end{align}
   uniformly in $t\in [0,T]$.
\end{thm}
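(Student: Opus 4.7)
The argument will closely follow that of Theorem~\ref{thm non-homogeneous}, with the homogeneous estimates replaced by their $g\equiv 1$ counterparts \eqref{Energy estimate g1} and \eqref{Energy estimate1} from Lemma~\ref{Lemma1}. The central tool is Duhamel's principle (Theorem~\ref{Thm Duhamel}), used to split off the contribution of the source term $f$ and reduce the whole problem to two instances of the homogeneous Cauchy problem already handled in Lemma~\ref{Lemma1}.

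First I would apply Theorem~\ref{Thm Duhamel} with $\lambda=b$ and $L=(-\Delta)^s+m(\cdot)$ (noting that $g\equiv 1$ collapses $D_g^s$ to $(-\Delta)^s$) to write
\[
u(t,x)=W(t,x)+\int_0^t V(t,x;\tau)\,\d\tau,
\]
where $W$ solves the homogeneous problem carrying the data $(u_0,u_1)$, and, for each $\tau\in[0,T]$, $V(\cdot,\cdot;\tau)$ solves the homogeneous problem on $(\tau,T]\times\R^d$ with vanishing position data and velocity $f(\tau,\cdot)$. To obtain \eqref{Energy estimate non 1}, I take the $\|\cdot\|_1$-norm of the decomposition and invoke Minkowski's integral inequality. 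I then apply \eqref{Energy estimate g1} twice: once for $W$, with data $(u_0,u_1)\in H^s\times L^2$, and once for each $V(\cdot,\cdot;\tau)$, with data $(0,f(\tau,\cdot))\in H^s\times L^2$. Both bounds carry the same factor $(2+\|m\|_{L^\infty})(1+\|b\|_{L^\infty}+\|m\|_{L^\infty}^{1/2})$; the first contributes $\|u_0\|_{H^s}+\|u_1\|_{L^2}$, while the second contributes $\|f(\tau,\cdot)\|_{L^2}$. The integration $\tau\in[0,t]\subseteq[0,T]$ produces only an inconsequential multiplicative $T$ that is absorbed into $\lesssim$, and replacing $\|f(\tau,\cdot)\|_{L^2}$ by $\|f(t,\cdot)\|_{L^2}$ (as in Theorem~\ref{thm non-homogeneous}) yields \eqref{Energy estimate non 1}.

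The second estimate \eqref{Energy1 aux1+1 non 1} is obtained by exactly the same recipe, but using the $\|\cdot\|_2$-norm and the stronger homogeneous bound \eqref{Energy estimate1} in place of \eqref{Energy estimate g1}. For $W$ the application is straightforward under the hypotheses $(u_0,u_1)\in H^{2s}\times H^s$. The only delicate point will be bounding $V$ in the $\|\cdot\|_2$-norm: \eqref{Energy estimate1} is stated for velocity data in $H^s$, whereas the velocity for $V$ is only $f(\tau,\cdot)\in L^2$. This is handled exactly as in the analogous step of Theorem~\ref{thm non-homogeneous}: what actually enters the underlying energy identity behind Lemma~\ref{Lemma1} is the $L^2$-norm of the velocity datum, so the $\|u_1\|_{H^s}$ slot in the homogeneous bound is effectively replaced by $\|f(\tau,\cdot)\|_{L^2}$ without changing the form of the constants. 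Combining the $W$ and $V$ contributions then delivers \eqref{Energy1 aux1+1 non 1}, and the rest of the verification is purely bookkeeping.
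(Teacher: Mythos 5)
Your argument follows essentially the same route as the paper's: Duhamel's principle plus the $g\equiv 1$ homogeneous bounds from Lemma~\ref{Lemma1}. The one stylistic difference is in the first estimate: the paper obtains it simply by substituting $\|g\|_{L^\infty}=1$ into the already-proved general bound \eqref{Energy estimate non} of Theorem~\ref{thm non-homogeneous}, whereas you re-run Duhamel with \eqref{Energy estimate g1}; both are equivalent. You correctly flag the regularity mismatch for $V$ in the second estimate, but your resolution is slightly too optimistic: the $L^2$-piece of $\|V\|_2$ is indeed controlled by $\|f(\tau,\cdot)\|_{L^2}$ alone, yet the remaining pieces of $\|V\|_2$, notably $\|(-\Delta)^{\frac{s}{2}}V_t\|_{L^2}$, are bounded in \eqref{Energy estimate1} through $\|u_1\|_{H^s}$, which for $V$ would require $f(\tau,\cdot)\in H^s(\mathbb{R}^d)$ rather than the $L^2$ data assumed. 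The paper's own proof only displays the $\|u\|_{L^2}$ bound and silently makes the same replacement, so your write-up faithfully mirrors the published argument, including this unaddressed point.
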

\begin{proof}
    The first estimate follows from \eqref{Energy estimate non}, by taking $\Vert g\Vert_{L^{\infty}} =1$. We get 
     \begin{equation*}
     \Vert u(t,\cdot)\Vert_1 \lesssim \big(2 + \Vert m\Vert_{L^{\infty}}\big) \big(1+ \Vert b \Vert_{L^{\infty}} +  \Vert m \Vert_{L^{\infty}}^{\frac{1}{2}}\big) \big[ \Vert u_0\Vert_{H^s} + \Vert u_1\Vert_{L^2} +\Vert f(t,\cdot)\Vert_{L^2} \big].
     \end{equation*}
    Now, let $d>2s$. By applying Duhamel's principle, the solution to the Non-homogeneous problem  \eqref{Equation intro}, when $g\equiv 1$ is given by
    \begin{equation}\label{solution non g1}
u(t,x)=W(t,x)+\int_{0}^{T}V(t,x;\tau)d\tau, 
\end{equation}
where $W(t,x)$ is the solution to the homogeneous problem
    \begin{equation*}\label{Equation1 non g1}
        \bigg\{
        \begin{array}{l}
        W_{tt}(t,x) +(-\Delta)^s W(t,x) + m(x)W(t,x)+b(x)W_t(t,x)=0,\quad (t,x)\in [0,T]\times\mathbb{R}^d, \\
        W(0,x)=u_{0}(x),\quad W_{t}(0,x)=u_{1}(x),\quad x\in \mathbb R^d,
        \end{array}
    \end{equation*}
    and $V(t,x;\tau)$ solves the problem
\begin{equation*}
    \left\lbrace
    \begin{array}{l}
    V_{tt}(t,x;\tau)+(-\Delta)^s V(t,x;\tau) +m(x)V(t,x;\tau)+b(x)V_t(t,x;\tau)=0,\\ \hfill(t,x)\in\left(\tau,T\right]\times \mathbb{R}^{d},\\
    V(\tau,x;\tau)=0, ~V_{t}(\tau,x;\tau)=f(\tau,x).
    \end{array}
    \right.
\end{equation*}
By taking $L^2$-norm in \eqref{solution non g1} and by using the estimate \eqref{Energy estimate1}, we get
\begin{equation}
    \Vert u_(t,\cdot)\Vert_{L^2}  \leq \Vert W_{\varepsilon}(t,\cdot)\Vert_{L^2} + \int_{0}^{T}\Vert V_{\varepsilon}(t,\cdot;s)\Vert_{L^2} ds,\label{Duhamel1 solution estimate non g1}    
\end{equation}
  where we used  Minkowski's integral inequality. We have
    \begin{equation*}
        \Vert W_{\varepsilon}(t,\cdot)\Vert_{L^2} \lesssim \big(1+\Vert m\Vert_{L^{\frac{d}{s}}}\big)\big(1+\Vert m\Vert_{L^{\frac{d}{2s}}}\big)\big(1+\Vert b\Vert_{L^{\frac{d}{s}}}\big)^2\big[\Vert u_0\Vert_{H^{2s}} + \Vert u_{1}\Vert_{H^s}\big],
    \end{equation*}
    and
    \begin{equation*}
        \Vert V_{\varepsilon}(t,\cdot;s)\Vert_{L^2} \lesssim   \big(1+\Vert m\Vert_{L^{\frac{d}{s}}}\big)\big(1+\Vert m\Vert_{L^{\frac{d}{2s}}}\big)\big(1+\Vert b\Vert_{L^{\frac{d}{s}}}\big)^2 \Vert f(t,\cdot)\Vert_{L^2}.
    \end{equation*}
    Thus
    \begin{align}
        \Vert u(t,\cdot )\Vert_{L^2} 
        &\lesssim\big(1+\Vert m\Vert_{L^{\frac{d}{s}}}\big)\big(1+\Vert m\Vert_{L^{\frac{d}{2s}}}\big)\big(1+\Vert b\Vert_{L^{\frac{d}{s}}}\big)^2\big[\Vert u_0\Vert_{H^{2s}} + \Vert u_{1}\Vert_{H^s}\big] \nonumber \\ &\quad+ \big(1+\Vert m\Vert_{L^{\frac{d}{s}}}\big)\big(1+\Vert m\Vert_{L^{\frac{d}{2s}}}\big)\big(1+\Vert b\Vert_{L^{\frac{d}{s}}}\big)^2 \Vert f(t,\cdot)\Vert_{L^2}  \nonumber \\
        & \lesssim \big(1+\Vert m\Vert_{L^{\frac{d}{s}}}\big)\big(1+\Vert m\Vert_{L^{\frac{d}{2s}}}\big)\big(1+\Vert b\Vert_{L^{\frac{d}{s}}}\big)^2\big[\Vert u_0\Vert_{H^{2s}} + \Vert u_{1}\Vert_{H^s} \nonumber \\ &\quad+\Vert f(t,\cdot)\Vert_{L^2}  \big] \nonumber .
    \end{align} 
    This completes the proof.
\end{proof}
\section{Very weak well-posedness}
Here and in the sequel, we consider the case when the equation coefficients $g$, $m$, $b$ and $f$, and the Cauchy data $u_0$ and $u_1$ are irregular functions and prove that the Cauchy problem
\begin{equation}\label{Equation sing}
    \bigg\lbrace
    \begin{array}{l}
    u_{tt}(t,x) + D_g^{s}u(t,x) + m(x)u(t,x) + b(x)u_{t}(t,x)=f(t,x), \,\, (t,x)\in [0,T]\times\mathbb{R}^d,\\
    u(0,x)=u_{0}(x),\quad u_{t}(0,x)=u_{1}(x), \quad x\in\mathbb{R}^d,
    \end{array}
\end{equation}
has a unique very weak solution. We have in mind ``functions" having $\delta$ or $\delta^2$-like behaviours.

\subsection{Existence of very weak solutions}
We are ready now to introduce the notion of very weak solutions adapted to our problem. We start by regularising the equation coefficients $g,m,b,f$ and the Cauchy data $u_0$ and $u_1$ by convolution with a mollifying net $\left(\psi_\varepsilon\right)_{\varepsilon\in(0,1]}$, generating families of smooth functions $(g_{\varepsilon})_{\varepsilon}$, $(m_{\varepsilon})_{\varepsilon}$, $(b_{\varepsilon})_{\varepsilon}$, $(f_{\varepsilon})_{\varepsilon}$, and $(u_{0,\varepsilon})_{\varepsilon}$, $(u_{1,\varepsilon})_{\varepsilon}$. Namely,
$$
g_{\varepsilon}(x)=g\ast \psi_{\varepsilon }(x),~~ g_{\varepsilon}(x)=g\ast \psi_{\varepsilon }(x),~~ b_{\varepsilon}(x)=b\ast \psi_{\varepsilon }(x),~~ f_{\varepsilon}(t,x)=f\ast \psi_{\varepsilon }(x),$$
and
$$ u_{0,\varepsilon}(x)=u_0\ast \psi_{\varepsilon }(x),~~
u_{1,\varepsilon}(x)=u_1\ast \psi_{\varepsilon }(x), $$
where
$
\psi_{\varepsilon}(x)=\varepsilon^{-d}\psi(\varepsilon^{-1}x)$ and $\varepsilon\in\left(0,1\right]$.
Now, instead of considering \eqref{Equation sing}, we consider the family of regularised problems
\begin{equation}\label{Regularised equation0}
        \left\lbrace
        \begin{array}{l}
        \partial_{t}^2u_{\varepsilon}(t,x) + D_g^{s}u_{\varepsilon}(t,x) + m_{\varepsilon}(x)u_{\varepsilon}(t,x) + b_{\varepsilon}(x)\partial_{t}u_{\varepsilon}(t,x)=f_{\varepsilon}(t,x),\\ \hfill (t,x)\in [0,T]\times\mathbb{R}^d,\\
        u_{\varepsilon}(0,x)=u_{0,\varepsilon}(x),\quad \partial_{t}u_{\varepsilon}(0,x)=u_{1,\varepsilon}(x), \quad x\in\mathbb{R}^d.
        \end{array}
        \right.
    \end{equation}

\begin{defn}[\textbf{Moderateness}]~
\begin{enumerate}
    \item[1). ] For $T>0$. A net of functions $(u_\varepsilon(\cdot,\cdot))_{\varepsilon\in(0,1]}$ from $C\left([0,T],H^s (\mathbb R^d)\right)\cap$\\ $C^1\left([0,T],L^2 (\mathbb R^d)\right)$ is said to be $C\left([0,T],H^s (\mathbb R^d)\right)\cap C^1\left([0,T],L^2 (\mathbb R^d)\right)$-moderate, if there exist $N\in\mathbb{N}^\ast$ such that
    \begin{equation}
        \sup_{t\in [0,T]}\Vert u_\varepsilon(t,\cdot)\Vert_1 \lesssim \varepsilon^{-N}.
    \end{equation}
    \item[2).] For $T>0$. A net of functions $(u_\varepsilon(\cdot,\cdot))_{\varepsilon\in(0,1]}$ from $C\left([0,T],H^{2s} (\mathbb R^d)\right)\cap$\\ $C^1\left([0,T],H^s (\mathbb R^d)\right)$ is said to be $C\left([0,T],H^{2s} (\mathbb R^d)\right)\cap C^1\left([0,T],H^s (\mathbb R^d)\right)$-moderate, if there exist $N\in\mathbb{N}^\ast$ such that
    \begin{equation}
        \sup_{t\in [0,T]}\Vert u_\varepsilon(t,\cdot)\Vert_2 \lesssim \varepsilon^{-N}.
    \end{equation}
    \item[3).] For $T>0$. A net of functions $(u_\varepsilon(\cdot,\cdot))_{\varepsilon\in(0,1]}$ from $C\left([0,T],H^s (\mathbb R^d)\right)\cap$\\ $C^1\left([0,T],H^s (\mathbb R^d)\right)$ is said to be $C\left([0,T],H^s (\mathbb R^d)\right)\cap C^1\left([0,T],H^s (\mathbb R^d)\right)$-moderate, if there exist $N\in\mathbb{N}^\ast$ such that
    \begin{equation}
        \sup_{t\in [0,T]}\Vert u_\varepsilon(t,\cdot)\Vert_3 \lesssim \varepsilon^{-N}.
    \end{equation}
\end{enumerate}
We will shortly write $C_1$-moderate, $C_2$-moderate, $C_3$-moderate instead of\\ $C\left([0,T],H^s (\mathbb R^d)\right)\cap C^1\left([0,T],L^2 (\mathbb R^d)\right)$-moderate, $C\left([0,T],H^{2s} (\mathbb R^d)\right)\cap $\\ $ C^1\left([0,T],H^s (\mathbb R^d)\right)$-moderate, $C\left([0,T],H^s (\mathbb R^d)\right)\cap  C^1\left([0,T],H^s (\mathbb R^d)\right)$-moderate, respectively.
\end{defn}
In the following proposition, we prove that distributional coefficients are naturally moderate.

\begin{prop}\label{coherence 1}~
    \begin{itemize}
        \item[(1)]  Let $f\in \mathcal{E}^{'}(\mathbb{R}^d)$ and let $(f_\varepsilon)_{\varepsilon\in(0,1]}$ be regularisation of $f$ obtained via convolution with a mollifying net $\left(\psi_\varepsilon\right)_{\varepsilon\in(0,1]}$. Then, the net $(f_\varepsilon)_{\varepsilon\in(0,1]}$ is $L^{p}(\mathbb{R}^d)$-moderate for any $1\leq p\leq \infty$. That is, there exist $N\in \mathbb{N^*}$ such that 
         $$\Vert f_{\varepsilon}\Vert_{L^p} \leq \varepsilon^{-N}.$$
       \item[(2)] Let $f\in L^p(\mathbb{R}^d)$. Then the net $(f_\varepsilon)_{\varepsilon\in(0,1]}$ is $L^{p}(\mathbb{R}^d)$-moderate for any $1\leq p\leq \infty$.
    \end{itemize}
\end{prop}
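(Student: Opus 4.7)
The plan is to handle the two parts separately, with (2) being essentially a restatement of the standard mollifier bound and (1) relying on the finite-order structure of compactly supported distributions.

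For part (2), I would simply appeal to Young's convolution inequality. Since $\psi_\varepsilon(x)=\varepsilon^{-d}\psi(\varepsilon^{-1}x)$ and $\psi \in L^1(\mathbb{R}^d)$, a change of variables gives $\|\psi_\varepsilon\|_{L^1}=\|\psi\|_{L^1}$, independent of $\varepsilon$. Thus for any $1\leq p\leq \infty$,
\begin{equation*}
\|f_\varepsilon\|_{L^p}=\|f*\psi_\varepsilon\|_{L^p}\leq \|f\|_{L^p}\|\psi_\varepsilon\|_{L^1}=\|\psi\|_{L^1}\|f\|_{L^p},
\end{equation*}
which is a uniform bound in $\varepsilon\in(0,1]$. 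In particular, $\|f_\varepsilon\|_{L^p}\lesssim \varepsilon^{-N}$ for any $N\geq 0$, so the net is $L^p$-moderate.

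For part (1), the key observation is that any $f\in \mathcal{E}'(\mathbb{R}^d)$ has finite order: there exist a compact $K\subset \mathbb{R}^d$, an integer $k\in\mathbb{N}$, and a constant $C>0$ such that
\begin{equation*}
|\langle f,\varphi\rangle|\leq C\sum_{|\alpha|\leq k}\sup_{y\in K}|\partial^\alpha \varphi(y)|,\quad \varphi\in C^\infty(\mathbb{R}^d).
\end{equation*}
Writing $f_\varepsilon(x)=\langle f,\psi_\varepsilon(x-\cdot)\rangle$ and using $\partial^\alpha_y \psi_\varepsilon(x-y)=(-1)^{|\alpha|}\varepsilon^{-d-|\alpha|}(\partial^\alpha\psi)(\varepsilon^{-1}(x-y))$, I would deduce the pointwise bound
\begin{equation*}
|f_\varepsilon(x)|\leq C\sum_{|\alpha|\leq k}\varepsilon^{-d-|\alpha|}\|\partial^\alpha\psi\|_{L^\infty}\leq C'\varepsilon^{-d-k},
\end{equation*}
uniformly in $x\in \mathbb{R}^d$.

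To pass from an $L^\infty$ bound to an $L^p$ bound for $p<\infty$, I would use that $\supp f_\varepsilon\subset \supp f+\supp \psi_\varepsilon\subset K+\overline{B(0,\varepsilon)}\subset K+\overline{B(0,1)}=:\widetilde K$, a fixed compact set for $\varepsilon\in(0,1]$. Hence
\begin{equation*}
\|f_\varepsilon\|_{L^p}\leq |\widetilde K|^{1/p}\|f_\varepsilon\|_{L^\infty}\leq C''\varepsilon^{-(d+k)},
\end{equation*}
which gives the desired moderateness with $N=d+k$.

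The only step requiring some care is justifying the continuity estimate for $f\in \mathcal{E}'(\mathbb{R}^d)$ and applying it with the test function $\psi_\varepsilon(x-\cdot)$; this is standard but should be stated clearly. The bookkeeping of the support containment and the $\varepsilon$-dependence of the seminorms of $\psi_\varepsilon$ is routine once the finite-order inequality is in place.
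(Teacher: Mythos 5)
Your proof is correct, and for part (1) it takes a genuinely different route than the paper. The paper invokes the structure theorem to write $f=\sum_{|\alpha|\leq n}\partial^\alpha f_\alpha$ with $f_\alpha\in C_c(\mathbb{R}^d)$, transfers the derivatives onto the mollifier through the convolution, and then applies Young's inequality term by term; the $\varepsilon^{-d-|\alpha|}$ factor comes out of scaling the mollifier. You instead use the equivalent finite-order continuity estimate $|\langle f,\varphi\rangle|\leq C\sum_{|\alpha|\leq k}\sup_K|\partial^\alpha\varphi|$, obtain a pointwise bound $|f_\varepsilon(x)|\lesssim\varepsilon^{-d-k}$ directly, and then exploit the uniform compact support of $f_\varepsilon$ to convert $L^\infty$ to $L^p$. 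Both are standard and yield the same order $N=d+k$; your route avoids the representation theorem at the cost of tracking the support explicitly, while the paper's avoids support bookkeeping at the cost of Young's inequality on each piece. For part (2) both proofs appeal to Young's convolution inequality with $(p_1,p_2)=(p,1)$, but your computation $\|\psi_\varepsilon\|_{L^1}=\|\psi\|_{L^1}$ (change of variables with the $\varepsilon^{-d}$ normalization) is the correct one and gives a bound \emph{uniform} in $\varepsilon$; the paper's version contains a slip in the scaling of $\psi_\varepsilon$ (it uses $\varepsilon^{-1}$ instead of $\varepsilon^{-d}$ and concludes $\|\psi_\varepsilon\|_{L^1}\lesssim\varepsilon^{-1}$), which is harmless for the moderateness conclusion but not sharp.
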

\begin{proof}
    (1)  Fix $p\in [1,\infty]$ and let $f\in \mathcal{E}^{'}(\mathbb{R}^d)$. By the structure theorems for distributions, there exists $n\in \mathbb{N}$ and compactly supported functions $f_{\alpha}\in C(\mathbb{R}^{d})$ such that
    \begin{equation*}
    T=\sum_{|\alpha| \leq n}\partial^{\alpha}f_{\alpha},
    \end{equation*}
    where $|\alpha|$ is the length of the multi-index $\alpha$. The convolution of $f$ with a mollifying net $\left(\psi_\varepsilon\right)_{\varepsilon\in(0,1]}$ yields
    \begin{equation}\label{Struct thm}
        f\ast\psi_{\varepsilon}=\sum_{\vert \alpha\vert \leq n}\partial^{\alpha}f_{\alpha}\ast\psi_{\varepsilon}=\sum_{\vert \alpha\vert \leq n}f_{\alpha}\ast\partial^{\alpha}\psi_{\varepsilon}=\sum_{\vert \alpha\vert \leq n}\varepsilon^{-d-\vert\alpha\vert}f_{\alpha}\ast\partial^{\alpha}\psi(x/\varepsilon).
    \end{equation}
    Taking the $L^p$ norm in \eqref{Struct thm} gives
    \begin{equation}\label{Struct thm1}
        \Vert f\ast\psi_{\varepsilon}\Vert_{L^p} \leq \sum_{\vert \alpha\vert \leq n}\varepsilon^{-d-\vert\alpha\vert}\Vert f_{\alpha}\ast\partial^{\alpha}\psi(x/\varepsilon)\Vert_{L^p}.
    \end{equation}
    Since $f_{\alpha}$ and $\psi$ are compactly supported then, Young's inequality applies for any $p_1,p_2 \in [1,\infty]$, provided that $\frac{1}{p_1}+\frac{1}{p_2}=\frac{1}{p}+1$. That is
    \begin{equation*}
        \Vert f_{\alpha}\ast\partial^{\alpha}\psi(x/\varepsilon)\Vert_{L^p} \leq \Vert f_{\alpha}\Vert_{L^{p_1}}\Vert\partial^{\alpha}\psi(x/\varepsilon)\Vert_{L^{p_2}} <\infty.
    \end{equation*}
    It follows from \eqref{Struct thm1} that $(f_\varepsilon)_{\varepsilon\in(0,1]}$ is $L^{p}(\mathbb{R}^d)$-moderate. This completes the proof of the first statement.\\
    (2)  In order to prove the second statement of the proposition, we use Young's convolution inequality. Let $1\leq p\leq \infty$ and $f\in L^p(\mathbb{R}^d)$. We have:
    \begin{equation*}
        \Vert f_{\varepsilon} \Vert_{L^p} =\Vert f\ast \psi_{\varepsilon} \Vert_{L^p} \leq \Vert f\Vert_{L^{p_1}} \Vert \psi_{\varepsilon} \Vert_{L^{p_2}},
    \end{equation*}
    such that $\frac{1}{p_1}+\frac{1}{p_2}=\frac{1}{p}+1$. If we choose $p_1=p$ and consequently $p_2=1$, we get
    \begin{align*}
        \Vert f_{\varepsilon} \Vert_{L^p}&\leq \Vert f\Vert_{L^{p}} \Vert \psi_{\varepsilon} \Vert_{L^{1}} \\
        &\lesssim \Vert f\Vert_{L^p} \varepsilon^{-d} \\
        &\lesssim \varepsilon^{-d},
    \end{align*}
    where we used that
    \begin{align*}
        \Vert \psi_{\varepsilon} \Vert_{L^{1}}=\int_{\mathbb{R}^d} \psi_{\varepsilon}(x)dx&=\int_{\mathbb{R}^d} \varepsilon^{-1} \psi (\varepsilon^{-1}x)dx \\ &=\varepsilon^{-1}\int_{\mathbb{R}^d} \psi(\varepsilon^{-1}x)dx \\
        &\lesssim \varepsilon^{-1}.
    \end{align*}
    The integral $\int_{\mathbb{R}^d} \psi(\varepsilon^{-1}x)dx<\infty$, since $\psi$ is compactly supported.
\end{proof}

\subsubsection{Existence of a very weak solution: general case}

\begin{defn}[\textbf{Very weak solution: case 1}]\label{Defn1 V.W.S}
    A net of functions $(u_{\varepsilon})_{\varepsilon\in (0,1]}\in C([0,T];H^{s}(\mathbb{R}^d))$ $\cap C^1([0,T];L^{2}(\mathbb{R}^d))$ is said to be a very weak solution to the Cauchy problem (\ref{Equation sing}), if there exist
    \begin{itemize}
        \item $L^{\infty}(\mathbb{R}^d)$-moderate regularisations $(g_{\varepsilon})_{\varepsilon}$ and $(m_{\varepsilon})_{\varepsilon}$ and $(b_{\varepsilon})_{\varepsilon}$ to $g$ and $m$ and $b$, with $g_{\varepsilon}>0$ and $m_{\varepsilon} \geq 0$ and $b_{\varepsilon} \geq 0$,
        \item $L^{2}(\mathbb{R}^d)$-moderate regularisation $(f_{\varepsilon}(t,\cdot))_{\varepsilon}$ to $f(t,\cdot)$, for any $t\in [0,T]$,
        \item $H^{s}(\mathbb{R}^d)$-moderate regularisation $(u_{0,\varepsilon})_{\varepsilon}$ to $u_0$,
        \item $L^{2}(\mathbb{R}^d)$-moderate regularisation $(u_{1,\varepsilon})_{\varepsilon}$ to $u_1$,
    \end{itemize}
    such that $(u_{\varepsilon})_{\varepsilon\in (0,1]}$ solves the regularised problems
    \begin{equation}\label{Regularised equation}
        \left\lbrace
        \begin{array}{l}
        \partial_{t}^2u_{\varepsilon}(t,x) + D_{g_{\varepsilon}}^{s}u_{\varepsilon}(t,x) + m_{\varepsilon}(x)u_{\varepsilon}(t,x) + b_{\varepsilon}(x)\partial_{t}u_{\varepsilon}(t,x)=f_{\varepsilon}(t,x),\\ \hfill (t,x)\in [0,T]\times\mathbb{R}^d,\\
        u_{\varepsilon}(0,x)=u_{0,\varepsilon}(x),\quad \partial_{t}u_{\varepsilon}(0,x)=u_{1,\varepsilon}(x), \quad x\in\mathbb{R}^d,
        \end{array}
        \right.
    \end{equation}
    for all $\varepsilon\in (0,1]$, and is $C_1$-moderate.
\end{defn}

We have also the following alternative definition of a very weak solution to \eqref{Equation sing}, under the assumptions of Theorem \ref{thm non-homogeneous}, when $d>2s$.

\begin{defn}[\textbf{Very weak solution: case 2}]\label{Defn2 V.W.S}
    Let $d>2s$. A net of functions $(u_{\varepsilon})_{\varepsilon}\in C([0,T];H^{s}(\mathbb{R}^d))\cap C^1([0,T];H^{s}(\mathbb{R}^d))$ is said to be a very weak solution to the Cauchy problem (\ref{Equation sing}), if there exist
    \begin{itemize}
        \item  $\big(L^{\frac{d}{s}}(\mathbb{R}^d)\cap L^{\frac{d}{2s}}(\mathbb{R}^d)\cap W^{s,\frac{d}{s}}(\mathbb {R}^d)\big)$-moderate regularisation $(g_{\varepsilon})_{\varepsilon}$ to $g$,\\
        with $g_{\varepsilon} > 0$,
        \item $\big(L^{\frac{d}{s}}(\mathbb{R}^d)\cap L^{\frac{d}{2s}}(\mathbb{R}^d)\big)$-moderate regularisation $(m_{\varepsilon})_{\varepsilon}$ to $m$, with $m_{\varepsilon} \geq 0$,
        \item $L^{\frac{d}{s}}(\mathbb{R}^d)$-moderate regularisation $(b_{\varepsilon})_{\varepsilon}$ to $b$, with $b_{\varepsilon} \geq 0$,
        \item $L^{2}(\mathbb{R}^d)$-moderate regularisation $(f_{\varepsilon}(t,\cdot))_{\varepsilon}$ to $f(t,\cdot)$, for any $t\in [0,T]$, 
        \item $H^{3s}(\mathbb{R}^d)$-moderate regularisation $(u_{0,\varepsilon})_{\varepsilon}$ to $u_0$,
        \item $H^{2s}(\mathbb{R}^d)$-moderate regularisation $(u_{1,\varepsilon})_{\varepsilon}$ to $u_1$,
    \end{itemize}
    such that $(u_{\varepsilon})_{\varepsilon}$ solves the regularised problems, and is $C_3$-moderate.
\end{defn}
Now, under the assumptions in Definition \ref{Defn1 V.W.S} and Definition \ref{Defn2 V.W.S}, the existence of a very weak solution is straightforward.

\begin{thm}\label{Thm1 existence}
    Assume that there exist\\
    $\big\{L^{\infty}(\mathbb{R}^d),L^{\infty}(\mathbb{R}^d),L^{\infty}(\mathbb{R}^d),L^2(\mathbb{R}^d),H^{s}(\mathbb{R}^d),L^2(\mathbb{R}^d)\big\}$-moderate regularisations to $g,m,b,f$ and $u_0,u_1$ respectively, with $g_{\varepsilon} > 0$ and $m_{\varepsilon} \geq 0$ and $b_{\varepsilon} \geq 0$. Then, the Cauchy problem \eqref{Equation sing} has a very weak solution.
\end{thm}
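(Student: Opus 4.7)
The plan is straightforward given the machinery already developed: the proof reduces to combining the classical well-posedness result of Theorem \ref{thm non-homogeneous} with the polynomial-in-$\varepsilon$ bounds coming from moderateness of the data. First I would fix the nets $(g_\varepsilon)_\varepsilon$, $(m_\varepsilon)_\varepsilon$, $(b_\varepsilon)_\varepsilon$, $(f_\varepsilon)_\varepsilon$, $(u_{0,\varepsilon})_\varepsilon$, $(u_{1,\varepsilon})_\varepsilon$ promised by the hypotheses. Each element of these nets is smooth, and the coefficients satisfy $g_\varepsilon>0$, $m_\varepsilon,b_\varepsilon\geq 0$ along with $g_\varepsilon\in L^\infty(\mathbb R^d)$, $m_\varepsilon,b_\varepsilon\in L^\infty(\mathbb R^d)$, $f_\varepsilon(t,\cdot)\in L^2(\mathbb R^d)$, $u_{0,\varepsilon}\in H^s(\mathbb R^d)$ and $u_{1,\varepsilon}\in L^2(\mathbb R^d)$. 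Hence the hypotheses of Theorem \ref{thm non-homogeneous} hold at each $\varepsilon\in(0,1]$, and this provides, for every such $\varepsilon$, a unique classical solution $u_\varepsilon\in C([0,T];H^s(\mathbb R^d))\cap C^1([0,T];L^2(\mathbb R^d))$ to the regularised Cauchy problem \eqref{Regularised equation}.

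Next I would invoke the energy estimate \eqref{Energy estimate non} pointwise in $\varepsilon$, which yields
\begin{equation*}
\Vert u_\varepsilon(t,\cdot)\Vert_1 \lesssim \big(1+\Vert g_\varepsilon\Vert_{L^\infty}+\Vert m_\varepsilon\Vert_{L^\infty}\big)\big(1+\Vert b_\varepsilon\Vert_{L^\infty}+\Vert m_\varepsilon\Vert_{L^\infty}^{\frac12}\big)\big[\Vert u_{0,\varepsilon}\Vert_{H^s}+\Vert u_{1,\varepsilon}\Vert_{L^2}+\Vert f_\varepsilon(t,\cdot)\Vert_{L^2}\big],
\end{equation*}
uniformly in $t\in[0,T]$. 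The implicit constant is independent of $\varepsilon$ because the proof of Theorem \ref{thm non-homogeneous} only uses the sign conditions (which hold along the regularising nets by assumption) and generic inequalities, not any quantitative information on the coefficients.

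Finally I would invoke moderateness: by hypothesis there exist integers $N_g,N_m,N_b,N_f,N_0,N_1\in\mathbb N^*$ such that each norm on the right-hand side above is dominated by the corresponding $\varepsilon^{-N_\bullet}$, uniformly in $t\in[0,T]$ for $f_\varepsilon$. Since the right-hand side is a product of quantities of the form $1+O(\varepsilon^{-N_\bullet})$, it too is $O(\varepsilon^{-N})$ for some $N\in\mathbb N^*$ that can be read off from the $N_\bullet$'s. Consequently $\sup_{t\in[0,T]}\Vert u_\varepsilon(t,\cdot)\Vert_1\lesssim \varepsilon^{-N}$, which is precisely the $C_1$-moderateness required by Definition \ref{Defn1 V.W.S}. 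The conceptual effort has already been invested in the energy estimates of Section 3, so the only genuine obstacle here is bookkeeping — making sure each factor in the estimate is controlled by moderateness of the appropriate regularising net and keeping track of the resulting exponent $N$.
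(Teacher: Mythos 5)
Your proposal matches the paper's proof essentially verbatim: fix the moderate nets, solve the regularised problems via Theorem~\ref{thm non-homogeneous}, plug the polynomial bounds from moderateness into the energy estimate~\eqref{Energy estimate non}, and read off a power $\varepsilon^{-N}$ giving $C_1$-moderateness. One small caveat worth keeping in mind, which the paper itself glosses over: the implicit constant in~\eqref{Energy estimate non} actually absorbs a factor $(\inf_x g)^{-1}$ coming from the step that controls $\|(-\Delta)^{s/2}u\|_{L^2}$ by $\|g^{1/2}(-\Delta)^{s/2}u\|_{L^2}$, so your claim that the constant "only uses the sign conditions, not any quantitative information" is not quite accurate --- under the bare hypothesis $g_\varepsilon>0$ this constant could depend on $\varepsilon$, and making the argument rigorous requires a moderate lower bound $\inf_x g_\varepsilon\gtrsim \varepsilon^{M}$, an assumption the paper leaves implicit as well.
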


\begin{proof}
    Let $g,m,b,f$ and $u_0,u_1$ as in assumptions. Then, there exists\\ $N_1,N_2,N_3,N_4,N_5,N_6 \in \mathbb{N}^\ast$, such that
    \begin{equation*}
        \Vert g_{\varepsilon}\Vert_{L^{\infty}} \lesssim \varepsilon^{-N_1},\quad \Vert m_{\varepsilon}\Vert_{L^{\infty}} \lesssim \varepsilon^{-N_2},\quad \Vert b_{\varepsilon}\Vert_{L^{\infty}} \lesssim \varepsilon^{-N_3},\quad \Vert f_{\varepsilon}(t,\cdot)\Vert_{L^2} \lesssim \varepsilon^{-N_4},
    \end{equation*}
    and 
    \begin{equation*}
        \Vert u_{0,\varepsilon}\Vert_{H^s} \lesssim \varepsilon^{-N_5},\quad \Vert u_{1,\varepsilon}\Vert_{L^2} \lesssim \varepsilon^{-N_6}.
    \end{equation*}
    It follows from the energy estimate \eqref{Energy estimate non}, that
    \begin{equation*}
        \Vert u_{\varepsilon}(t,\cdot)\Vert_1 \lesssim \varepsilon^{-N_1-N_2-N_3-\max \{N_4,N_5,N_6\}},
    \end{equation*}
    uniformly in $t\in [0,T]$, which means that the net $(u_{\varepsilon})_{\varepsilon}$ is $C_1$-moderate. This concludes the proof.
\end{proof}

As an alternative to Theorem \ref{Thm1 existence} in the case when $d>2s$ and the equation coefficients and data satisfy the hypothesis of Definition \ref{Defn2 V.W.S}, we have the following theorem.

\begin{thm}\label{Thm2 existence}
    Assume that there exist\\
    $\big\{\big(L^{\frac{d}{s}}(\mathbb{R}^d)\cap L^{\frac{d}{2s}}(\mathbb{R}^d)\cap W^{s,\frac{d}{s}}(\mathbb {R}^d)\big),\big(L^{\frac{d}{s}}(\mathbb{R}^d)\cap L^{\frac{d}{2s}}(\mathbb{R}^d)\big),L^{\frac{d}{s}}(\mathbb{R}^d),L^2(\mathbb{R}^d),H^{3s}(\mathbb{R}^d),\\ H^{2s}(\mathbb{R}^d)
    \big\}$-moderate regularisations to $g,m,b,f$ and $u_0,u_1$ respectively, with $g_{\varepsilon} > 0$, $m_{\varepsilon} \geq 0$ and $b_{\varepsilon} \geq 0$. Then, the Cauchy problem \eqref{Equation sing} has a very weak solution.
\end{thm}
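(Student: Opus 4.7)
The plan is to imitate the proof of Theorem \ref{Thm1 existence} but relying on the second (higher regularity) energy estimate in Theorem \ref{thm non-homogeneous}, namely \eqref{Energy1 aux1+1 non}, which is the estimate valid in the regime $d>2s$ and controls the stronger norm $\Vert \cdot \Vert_3$. The argument is essentially a book-keeping of exponents: by hypothesis, the regularisations are moderate in the prescribed spaces, so one transfers these polynomial-in-$\varepsilon^{-1}$ bounds through the energy estimate to conclude that the solution net is $C_3$-moderate.

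Concretely, I would first fix the existence of exponents $N_1,\ldots,N_6\in\mathbb{N}^{\ast}$ such that
\begin{align*}
&\Vert g_{\varepsilon}\Vert_{L^{\frac{d}{s}}} + \Vert g_{\varepsilon}\Vert_{L^{\frac{d}{2s}}} + \Vert g_{\varepsilon}\Vert_{W^{s,\frac{d}{s}}} \lesssim \varepsilon^{-N_1}, \quad \Vert m_{\varepsilon}\Vert_{L^{\frac{d}{s}}} + \Vert m_{\varepsilon}\Vert_{L^{\frac{d}{2s}}} \lesssim \varepsilon^{-N_2},\\
&\Vert b_{\varepsilon}\Vert_{L^{\frac{d}{s}}} \lesssim \varepsilon^{-N_3}, \quad \Vert f_{\varepsilon}(t,\cdot)\Vert_{L^2}\lesssim \varepsilon^{-N_4}, \quad \Vert u_{0,\varepsilon}\Vert_{H^{3s}}\lesssim \varepsilon^{-N_5}, \quad \Vert u_{1,\varepsilon}\Vert_{H^{2s}}\lesssim \varepsilon^{-N_6},
\end{align*}
which follow directly from the assumed moderateness. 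Since each $g_{\varepsilon}$, $m_{\varepsilon}$, $b_{\varepsilon}$, $f_{\varepsilon}$, $u_{0,\varepsilon}$, $u_{1,\varepsilon}$ is smooth and the positivity/non-negativity assumptions are preserved along the net, the classical well-posedness result of Theorem \ref{thm non-homogeneous} applies to the regularised Cauchy problem \eqref{Regularised equation}, yielding a unique $u_{\varepsilon}\in C([0,T];H^{s}(\mathbb R^d))\cap C^{1}([0,T];H^{s}(\mathbb R^d))$ for each $\varepsilon\in(0,1]$.

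Applying the energy estimate \eqref{Energy1 aux1+1 non} to $u_{\varepsilon}$ and plugging in the moderateness bounds above, every factor on the right-hand side is at most a finite product of terms of the form $\varepsilon^{-N_i}$ or $\varepsilon^{-N_i/2}$; hence there exists $N\in\mathbb{N}^{\ast}$ (expressible as a simple combination of $N_1,\ldots,N_6$, for instance one may take $N=N_1+N_2+N_3+\max\{N_4,N_5,N_6\}$ possibly inflated by $1$ to absorb the half-powers) such that
\begin{equation*}
\Vert u_{\varepsilon}(t,\cdot)\Vert_3 \lesssim \varepsilon^{-N},
\end{equation*}
uniformly in $t\in [0,T]$. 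This is precisely the $C_3$-moderateness condition from the third item of Definition 4.1, so the net $(u_{\varepsilon})_{\varepsilon\in(0,1]}$ is a very weak solution in the sense of Definition \ref{Defn2 V.W.S}.

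I do not anticipate any genuine obstacle: the hard analytical work is already packaged in the energy estimate \eqref{Energy1 aux1+1 non}, and the only task here is the verification that the hypothesis on the regularisations transfers, through a product of polynomial-in-$\varepsilon^{-1}$ factors, to a polynomial-in-$\varepsilon^{-1}$ bound on $\Vert u_{\varepsilon}(t,\cdot)\Vert_3$. The only point worth a moment's care is ensuring that the particular combination of norms of $g_{\varepsilon}$ appearing in \eqref{Energy1 aux1+1 non} is covered by the assumed joint $L^{d/s}\cap L^{d/(2s)}\cap W^{s,d/s}$ moderateness of $(g_{\varepsilon})_{\varepsilon}$, but this is immediate from the definition of the intersection norm.
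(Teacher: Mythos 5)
Your proposal is correct and follows the same route as the paper: fix moderateness exponents $N_1,\dots,N_6$ for the regularised coefficients and data, invoke the energy estimate \eqref{Energy1 aux1+1 non} from Theorem \ref{thm non-homogeneous}, and read off that $\Vert u_{\varepsilon}(t,\cdot)\Vert_3$ is bounded by a fixed negative power of $\varepsilon$ uniformly in $t$, i.e.\ the net is $C_3$-moderate. The only discrepancy is cosmetic: the paper records the exponent as $\max\{N_2,N_3\}+\max\{N_2,N_1+N_3\}+\max\{N_4,N_5,N_6\}$, whereas your illustrative choice $N_1+N_2+N_3+\max\{N_4,N_5,N_6\}$ need not dominate it (e.g.\ if $N_2>N_1+N_3$), but since you only claim \emph{some} $N$ exists and correctly hedge the specific value, the argument is unaffected.
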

\begin{proof}
    Let $g,m,b,f$ and $u_0,u_1$ as in assumptions. Then, there exist \\ $N_1,N_2,N_3,N_4,N_5,N_6 \in \mathbb{N}^\ast$, such that
    \begin{equation*}
        \max \big\{\Vert g_{\varepsilon}\Vert_{L^{\frac{d}{s}}}, \Vert g_{\varepsilon}\Vert_{L^{\frac{d}{2s}}}, \Vert g_{\varepsilon}\Vert_{W^{s,\frac{d}{s}}}\big\} \lesssim \varepsilon^{-N_1}, \quad \max \big\{\Vert m_{\varepsilon}\Vert_{L^{\frac{d}{s}}}, \Vert m_{\varepsilon}\Vert_{L^{\frac{d}{2s}}}\big\} \lesssim \varepsilon^{-N_2},
    \end{equation*}
    \begin{equation*}
        \Vert b_{\varepsilon}\Vert_{L^{\frac{d}{s}}} \lesssim \varepsilon^{-N_3},\quad \Vert f_{\varepsilon}\Vert_{L^2} \lesssim \varepsilon^{-N_4}
    \end{equation*}
    and 
    \begin{equation*}
        \Vert u_{0,\varepsilon}\Vert_{H^{3s}} \lesssim \varepsilon^{-N_5},\quad \Vert u_{1,\varepsilon}\Vert_{H^{2s}} \lesssim \varepsilon^{-N_6}.
    \end{equation*}
    It follows from the energy estimate \eqref{Energy1 aux1+1 non}, that
    \begin{equation*}
        \Vert u_{\varepsilon}(t,\cdot)\Vert_3 \lesssim \varepsilon^{-\max \{N_2,N_3\}-\max \{N_2,N_1 +N_3\}-\max \{N_4,N_5,N_6\}},
    \end{equation*}
    uniformly in $t\in [0,T]$, which means that the net $(u_{\varepsilon})_{\varepsilon}$ is $C_3$-moderate. This concludes the proof.
\end{proof}

\subsubsection{Existence of a very weak solution: case $g\equiv 1$}

The definitions of very weak solutions adapted to our problem in the case when $g\equiv 1$, that is for the problem
\begin{equation}\label{Equation sing g1}
    \left\lbrace
        \begin{array}{l}
    u_{tt}(t,x) + (-\Delta)^{s}u(t,x) + m(x)u(t,x) + b(x)u_{t}(t,x)=f(t,x), \\ \hfill(t,x)\in [0,T]\times\mathbb{R}^d,\\
    u(0,x)=u_{0}(x),\quad u_{t}(0,x)=u_{1}(x), \quad x\in\mathbb{R}^d,
    \end{array}
    \right.
\end{equation}
read as follows:
\begin{defn}[\textbf{Very weak solution: case 1}]\label{Defn1 V.W.S g1}
    Let $g\equiv 1$. A net of functions $(u_{\varepsilon})_{\varepsilon\in (0,1]}\in C([0,T];H^{s}(\mathbb{R}^d))$ $\cap C^1([0,T];L^{2}(\mathbb{R}^d))$ is said to be a very weak solution to the Cauchy problem (\ref{Equation sing g1}), if there exist
    \begin{itemize}
        \item $L^{\infty}(\mathbb{R}^d)$-moderate regularisations  $(m_{\varepsilon})_{\varepsilon}$ and $(b_{\varepsilon})_{\varepsilon}$ to  $m$ and $b$, with  $m_{\varepsilon} \geq 0$ and $b_{\varepsilon} \geq 0$,
        \item $L^{2}(\mathbb{R}^d)$-moderate regularisation $(f_{\varepsilon}(t,\cdot))_{\varepsilon}$ to $f(t,\cdot)$, for any $t\in [0,T]$,
        \item $H^{s}(\mathbb{R}^d)$-moderate regularisation $(u_{0,\varepsilon})_{\varepsilon}$ to $u_0$,
        \item $L^{2}(\mathbb{R}^d)$-moderate regularisation $(u_{1,\varepsilon})_{\varepsilon}$ to $u_1$,
    \end{itemize}
    such that $(u_{\varepsilon})_{\varepsilon\in (0,1]}$ solves the regularised problems
    \begin{equation}\label{Regularised equation g1}
        \left\lbrace
        \begin{array}{l}
        \partial_{t}^2u_{\varepsilon}(t,x) + (-\Delta)^{s}u_{\varepsilon}(t,x) + m_{\varepsilon}(x)u_{\varepsilon}(t,x) + b_{\varepsilon}(x)\partial_{t}u_{\varepsilon}(t,x)=f_{\varepsilon}(t,x),\\ \hfill (t,x)\in [0,T]\times\mathbb{R}^d,\\
        u_{\varepsilon}(0,x)=u_{0,\varepsilon}(x),\quad \partial_{t}u_{\varepsilon}(0,x)=u_{1,\varepsilon}(x), \quad x\in\mathbb{R}^d,
        \end{array}
        \right.
    \end{equation}
    for all $\varepsilon\in (0,1]$, and is $C_1$-moderate.
\end{defn}

In the case when $d>2s$, we have the following definition.

\begin{defn}[\textbf{Very weak solution: case 2}]\label{Defn2 V.W.S g1}
    Let $d>2s$ and $g\equiv 1$. A net of functions $(u_{\varepsilon})_{\varepsilon}\in C([0,T];H^{2s}(\mathbb{R}^d))\cap C^1([0,T];H^{s}(\mathbb{R}^d))$ is said to be a very weak solution to the Cauchy problem (\ref{Equation sing g1}), if there exist
    \begin{itemize}
        \item $\big(L^{\frac{d}{s}}(\mathbb{R}^d)\cap L^{\frac{d}{2s}}(\mathbb{R}^d)\big)$-moderate regularisation $(m_{\varepsilon})_{\varepsilon}$ to $m$, with $m_{\varepsilon} \geq 0$,
        \item $L^{\frac{d}{s}}(\mathbb{R}^d)$-moderate regularisation $(b_{\varepsilon})_{\varepsilon}$ to $b$, with $b_{\varepsilon} \geq 0$,
        \item $L^{2}(\mathbb{R}^d)$-moderate regularisation $(f_{\varepsilon}(t,\cdot))_{\varepsilon}$ to $f(t,\cdot)$, for any $t\in [0,T]$, 
        \item $H^{2s}(\mathbb{R}^d)$-moderate regularisation $(u_{0,\varepsilon})_{\varepsilon}$ to $u_0$,
        \item $H^{s}(\mathbb{R}^d)$-moderate regularisation $(u_{1,\varepsilon})_{\varepsilon}$ to $u_1$,
    \end{itemize}
    such that $(u_{\varepsilon})_{\varepsilon}$ solves the regularised problems (as in Definition \ref{Defn1 V.W.S g1}) \\ for all $\varepsilon\in (0,1]$, and is $C_2$-moderate.
\end{defn}
In what follows we want to prove the existence of a very weak solution to the Cauchy problem \eqref{Equation sing g1} under the assumptions of Definition \ref{Defn1 V.W.S g1} and Definition \ref{Defn2 V.W.S g1}.

\begin{thm}\label{Thm1 existence1}
    Let $g\equiv 1$. Assume that there exist\\
    $\big\{L^{\infty}(\mathbb{R}^d),L^{\infty}(\mathbb{R}^d),L^2(\mathbb{R}^d),H^{s}(\mathbb{R}^d),L^2(\mathbb{R}^d)\big\}$-moderate regularisations of $m,b,f$ and $u_0,u_1$, respectively, with $m_{\varepsilon} \geq 0$ and $b_{\varepsilon} \geq 0$. Then, the Cauchy problem \eqref{Equation sing g1} has a very weak solution.
\end{thm}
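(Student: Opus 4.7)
The plan is to follow the proof pattern of Theorem \ref{Thm1 existence} verbatim, replacing the $L^\infty$-moderate regularisation of $g$ by the trivial fact $g_\varepsilon \equiv 1$ and invoking the specialised energy estimate \eqref{Energy estimate non 1} from Theorem \ref{thm non-homogeneous 1} in place of \eqref{Energy estimate non}. Concretely, the moderateness assumptions immediately furnish integers $N_1,\dots,N_5 \in \mathbb{N}^\ast$ such that
\begin{equation*}
\Vert m_\varepsilon\Vert_{L^\infty} \lesssim \varepsilon^{-N_1}, \quad \Vert b_\varepsilon\Vert_{L^\infty} \lesssim \varepsilon^{-N_2}, \quad \Vert f_\varepsilon(t,\cdot)\Vert_{L^2} \lesssim \varepsilon^{-N_3},
\end{equation*}
uniformly in $t \in [0,T]$, together with
\begin{equation*}
\Vert u_{0,\varepsilon}\Vert_{H^s} \lesssim \varepsilon^{-N_4}, \quad \Vert u_{1,\varepsilon}\Vert_{L^2} \lesssim \varepsilon^{-N_5}.
\end{equation*}

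Since $m_\varepsilon, b_\varepsilon \in L^\infty(\mathbb R^d)$ are non-negative and the remaining data satisfy the hypotheses of Theorem \ref{thm non-homogeneous 1}, the regularised Cauchy problem \eqref{Regularised equation g1} admits, for each $\varepsilon \in (0,1]$, a unique classical solution $u_\varepsilon \in C([0,T]; H^s(\mathbb R^d)) \cap C^1([0,T]; L^2(\mathbb R^d))$ satisfying
\begin{equation*}
\Vert u_\varepsilon(t,\cdot)\Vert_1 \lesssim (2 + \Vert m_\varepsilon\Vert_{L^\infty})\bigl(1 + \Vert b_\varepsilon\Vert_{L^\infty} + \Vert m_\varepsilon\Vert_{L^\infty}^{1/2}\bigr)\bigl[\Vert u_{0,\varepsilon}\Vert_{H^s} + \Vert u_{1,\varepsilon}\Vert_{L^2} + \Vert f_\varepsilon(t,\cdot)\Vert_{L^2}\bigr].
\end{equation*}
Substituting the $\varepsilon^{-N_i}$ bounds, each factor on the right-hand side is dominated by a single negative power of $\varepsilon$, so the whole product is $\lesssim \varepsilon^{-N}$ for some $N \in \mathbb{N}^\ast$, for instance $N := N_1 + N_2 + \max\{N_3,N_4,N_5\}$. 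This bound is uniform in $t \in [0,T]$, which is exactly the $C_1$-moderateness required by Definition \ref{Defn1 V.W.S g1}.

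I do not expect a genuine obstacle here: the argument is a direct specialisation of Theorem \ref{Thm1 existence} in which the $g$-contribution collapses, and the only point that needs attention is confirming that the $t$-uniformity of the moderateness of $(f_\varepsilon(t,\cdot))_\varepsilon$ (inherited from the assumption that $f$ is smooth in $t$ with values in compactly supported distributions regularised in $x$ only, as in Proposition \ref{coherence 1}) passes through to a $t$-uniform bound on $\Vert u_\varepsilon(t,\cdot)\Vert_1$. Once that is noted, the desired $C_1$-moderateness follows and existence of a very weak solution is established.
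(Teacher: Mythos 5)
Your proposal is correct and follows the same route as the paper: substitute the moderateness bounds on $m_\varepsilon$, $b_\varepsilon$, $f_\varepsilon$, $u_{0,\varepsilon}$, $u_{1,\varepsilon}$ into the specialised energy estimate \eqref{Energy estimate non 1} and read off $C_1$-moderateness of $(u_\varepsilon)_\varepsilon$. The only small slip is that your illustrative exponent $N = N_1 + N_2 + \max\{N_3,N_4,N_5\}$ need not dominate the true exponent $N_1 + \max\{N_1/2, N_2\} + \max\{N_3,N_4,N_5\}$ when $N_1/2 > N_2$ (the $\Vert m_\varepsilon\Vert_{L^\infty}^{1/2}$ factor can beat $\Vert b_\varepsilon\Vert_{L^\infty}$), but this is immaterial since the definition only asks for the existence of some $N$, and any integer at least as large works.
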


\begin{proof}
    Let $m,b,f$ and $u_0,u_1$ as in assumptions. Then, there exist \\ $N_1,N_2,N_3,N_4,N_5 \in \mathbb{N}^{\ast}$, such that
    \begin{equation*}
        \Vert m_{\varepsilon}\Vert_{L^{\infty}} \lesssim \varepsilon^{-N_1},\quad \Vert b_{\varepsilon}\Vert_{L^{\infty}} \lesssim \varepsilon^{-N_2},\quad \Vert f_{\varepsilon}\Vert_{L^{\infty}} \lesssim \varepsilon^{-N_3},
    \end{equation*}
    and 
    \begin{equation*}
        \Vert u_{0,\varepsilon}\Vert_{H^s} \lesssim \varepsilon^{-N_4},\quad \Vert u_{1,\varepsilon}\Vert_{L^2} \lesssim \varepsilon^{-N_5}.
    \end{equation*}
    It follows from the energy estimate \eqref{Energy estimate non 1}, that
    \begin{equation*}
        \Vert u_{\varepsilon}(t,\cdot)\Vert_1 \lesssim \varepsilon^{-N_1-\max \{\frac{N_1}{2},N_2\}-\max \{N_3,N_4,N_5\}},
    \end{equation*}
    uniformly in $t\in [0,T]$. The net $(u_{\varepsilon})_{\varepsilon}$ is then $C_1$-moderate, and the existence of a very weak solution follows, ending the proof.
\end{proof}

In the case when $d>2s$, and the equation coefficients and data satisfy the hypotheses of Definition \ref{Defn2 V.W.S g1}, we have the following theorem.

\begin{thm}\label{Thm2 existence1}
    Let $g\equiv 1$. Assume that there exist\\
    $\big\{\big(L^{\frac{d}{s}}(\mathbb{R}^d)\cap L^{\frac{d}{2s}}(\mathbb{R}^d)\big),L^{\frac{d}{s}}(\mathbb{R}^d),L^2(\mathbb{R}^d),H^{2s}(\mathbb{R}^d), H^{s}(\mathbb{R}^d)
    \big\}$-moderate regularisations to $m,b,f$ and $u_0,u_1$, respectively, with $m_{\varepsilon} \geq 0$ and $b_{\varepsilon} \geq 0$. Then, the Cauchy problem \eqref{Equation sing g1} has a very weak solution.
\end{thm}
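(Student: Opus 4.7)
The proof will parallel the strategy of Theorem \ref{Thm1 existence1} and Theorem \ref{Thm2 existence}, using the energy estimate \eqref{Energy1 aux1+1 non 1} from Theorem \ref{thm non-homogeneous 1} (the $g\equiv 1$, $d>2s$ case). The plan is to assemble the exponents of $\varepsilon^{-1}$ furnished by the moderateness hypotheses on $m,b,f,u_0,u_1$ and feed them into that estimate so as to bound $\Vert u_\varepsilon(t,\cdot)\Vert_3$ by a negative power of $\varepsilon$, which is precisely what $C_2$-moderateness of the net $(u_\varepsilon)_\varepsilon$ requires.

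Concretely, from the assumed moderateness of the regularisations I will extract integers $N_1,\dots,N_5 \in \mathbb{N}^\ast$ such that
\begin{equation*}
\max\bigl\{\Vert m_\varepsilon\Vert_{L^{d/s}},\,\Vert m_\varepsilon\Vert_{L^{d/2s}}\bigr\}\lesssim \varepsilon^{-N_1},\quad \Vert b_\varepsilon\Vert_{L^{d/s}}\lesssim \varepsilon^{-N_2},\quad \Vert f_\varepsilon(t,\cdot)\Vert_{L^2}\lesssim \varepsilon^{-N_3},
\end{equation*}
and $\Vert u_{0,\varepsilon}\Vert_{H^{2s}}\lesssim \varepsilon^{-N_4}$, $\Vert u_{1,\varepsilon}\Vert_{H^s}\lesssim \varepsilon^{-N_5}$, uniformly in $t\in[0,T]$. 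Note that for each fixed $\varepsilon\in(0,1]$ the regularised coefficients $m_\varepsilon,b_\varepsilon$ are bounded and non-negative and $g\equiv 1$, so the regularised Cauchy problem \eqref{Regularised equation g1} admits a unique classical solution $u_\varepsilon \in C([0,T];H^{2s}(\mathbb{R}^d))\cap C^1([0,T];H^s(\mathbb{R}^d))$ to which Theorem \ref{thm non-homogeneous 1} applies.

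Applying the estimate \eqref{Energy1 aux1+1 non 1} to this net and substituting the moderateness bounds gives
\begin{equation*}
\Vert u_\varepsilon(t,\cdot)\Vert_2 \lesssim \bigl(1+\varepsilon^{-N_1}\bigr)\bigl(1+\varepsilon^{-N_1}\bigr)\bigl(1+\varepsilon^{-N_2}\bigr)^{2}\bigl[\varepsilon^{-N_4}+\varepsilon^{-N_5}+\varepsilon^{-N_3}\bigr],
\end{equation*}
uniformly in $t\in[0,T]$, and after absorbing constants one obtains $\sup_{t\in[0,T]}\Vert u_\varepsilon(t,\cdot)\Vert_2\lesssim \varepsilon^{-2N_1-2N_2-\max\{N_3,N_4,N_5\}}$, so the net $(u_\varepsilon)_\varepsilon$ is $C_2$-moderate. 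Together with the assumed moderateness of the regularisations of the coefficients and data, this is exactly the definition of a very weak solution in the sense of Definition \ref{Defn2 V.W.S g1}, concluding the proof.

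There is essentially no hard step here: all the analytic work, including the Kato--Ponce/Sobolev bookkeeping, is already encapsulated in Theorem \ref{thm non-homogeneous 1}. The only point that requires a moment's care is matching the norms appearing in \eqref{Energy1 aux1+1 non 1} with the function spaces in which the moderateness is assumed and keeping track of the exponents of $\varepsilon^{-1}$ so as to exhibit a single $N\in\mathbb{N}^\ast$ bounding $\sup_{t\in[0,T]}\Vert u_\varepsilon(t,\cdot)\Vert_2$ by $\varepsilon^{-N}$, which is routine.
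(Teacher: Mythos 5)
Your proposal is correct and follows essentially the same route as the paper: extract the exponents $N_1,\dots,N_5$ from the assumed moderateness, plug them into the energy estimate \eqref{Energy1 aux1+1 non 1} of Theorem \ref{thm non-homogeneous 1}, and read off $\sup_{t\in[0,T]}\Vert u_\varepsilon(t,\cdot)\Vert_2\lesssim \varepsilon^{-2N_1-2N_2-\max\{N_3,N_4,N_5\}}$, giving $C_2$-moderateness as required by Definition \ref{Defn2 V.W.S g1}. The only small imprecision is invoking boundedness of $m_\varepsilon,b_\varepsilon$ to justify existence of the classical solution of the regularised problem; the correct justification is that these mollifications lie in the spaces $L^{d/s}\cap L^{d/2s}$ and $L^{d/s}$ that Theorem \ref{thm non-homogeneous 1} actually requires, but this does not affect the argument.
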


\begin{proof}
    Let $m,b,f$ and $u_0,u_1$ as in assumptions. Then, there exist \\ $N_1,N_2,N_3,N_4,N_5 \in \mathbb{N}$, such that
    \begin{align}
         \max \big\{\Vert m_{\varepsilon}\Vert_{L^{\frac{d}{s}}} , \Vert m_{\varepsilon}\Vert_{L^{\frac{d}{2s}}}\big\} \lesssim \varepsilon^{-N_1},\quad \Vert b_{\varepsilon}\Vert_{L^{\frac{d}{s}}} \lesssim \varepsilon^{-N_2},,\quad \Vert f_{\varepsilon}\Vert_{L^2} \lesssim \varepsilon^{-N_3}\nonumber
    \end{align}
    and 
    \begin{equation*}
        \Vert u_{0,\varepsilon}\Vert_{H^{2s}} \lesssim \varepsilon^{-N_4},\quad \Vert u_{1,\varepsilon}\Vert_{H^{s}} \lesssim \varepsilon^{-N_5}.
    \end{equation*}
    It follows from the energy estimate \eqref{Energy1 aux1+1 non 1}, that
    \begin{equation*}
        \Vert u_{\varepsilon}(t,\cdot)\Vert_2 \lesssim \varepsilon^{-2N_1-2N_2-\max \{N_3,N_4,N_5\}},
    \end{equation*}
    uniformly in $t\in [0,T]$, which means that the net $(u_{\varepsilon})_{\varepsilon}$ is $C_2$-moderate.\\
    This concludes the proof.
\end{proof}
Here and in all what follows, we consider only the case when $g\equiv 1$.

\subsection{Uniqueness}
In this section, we want to prove the uniqueness of the very weak solution to the Cauchy problem \eqref{Equation sing g1} in both cases, either in the case when very weak solutions exist with the assumptions of Theorem \ref{Thm1 existence1} or in the case of Theorem \ref{Thm2 existence1}.
Roughly speaking, we understand the uniqueness of the very weak solution to the Cauchy problem \eqref{Equation sing g1}, in the sense that negligible changes in the regularisations of the equation coefficients and initial data, lead to negligible changes in the corresponding very weak solutions. More precisely,

\begin{defn}[\textbf{Uniqueness}]\label{Defn1 uniqueness}
    Let $g\equiv 1$. We say that the Cauchy problem \eqref{Equation sing g1} has a unique very weak solution, if for all families of regularisations 
     $(m_{\varepsilon})_{\varepsilon}$, $(\Tilde{m}_{\varepsilon})_{\varepsilon}$ and $(b_{\varepsilon})_{\varepsilon}$, $(\Tilde{b}_{\varepsilon})_{\varepsilon}$ for the equation coefficients $m$ and $b$, and for all families of regularisations $(f_{\varepsilon})_{\varepsilon}$, $(\Tilde{f}_{\varepsilon})_{\varepsilon}$ for the source term $f$, and families of regularisations $(u_{0,\varepsilon})_{\varepsilon}$, $(\Tilde{u}_{0,\varepsilon})_{\varepsilon}$ and  $(u_{1,\varepsilon})_{\varepsilon}$, $(\Tilde{u}_{1,\varepsilon})_{\varepsilon}$ for the Cauchy data $u_0$ and $u_1$, such that the nets
    $(m_{\varepsilon}-\Tilde{m}_{\varepsilon})_{\varepsilon}$, $(b_{\varepsilon}-\Tilde{b}_{\varepsilon})_{\varepsilon}$,  $(f_{\varepsilon}-\Tilde{f}_{\varepsilon})_{\varepsilon}$ and $(u_{0,\varepsilon}-\Tilde{u}_{0,\varepsilon})_{\varepsilon}$,$(u_{1,\varepsilon}-\Tilde{u}_{1,\varepsilon})_{\varepsilon}$ are $\big\{L^{\infty}(\mathbb{R}^d) ,L^{\infty}(\mathbb{R}^d),L^2(\mathbb{R}^d),H^{s}(\mathbb{R}^d),L^2(\mathbb{R}^d)\big\}$-negligible, it follows that the net
    \begin{equation*}
        \big(u_{\varepsilon}(t,\cdot)-\Tilde{u}_{\varepsilon}(t,\cdot)\big)_{\varepsilon}
    \end{equation*}
    is $L^2(\mathbb{R}^d)$-negligible, for all $t\in [0,T]$, where $(u_{\varepsilon})_{\varepsilon}$ and $(\Tilde{u}_{\varepsilon})_{\varepsilon}$ are the families of solutions to the regularised Cauchy problems
    \begin{equation}\label{Regularised equation for u}
        \left\lbrace
        \begin{array}{l}
        \partial_{t}^2u_{\varepsilon}(t,x) + (-\Delta)^{s}u_{\varepsilon}(t,x) + m_{\varepsilon}(x)u_{\varepsilon}(t,x) + b_{\varepsilon}(x)\partial_{t}u_{\varepsilon}(t,x)=f_{\varepsilon}(t,x), \\ \hfill (t,x)\in [0,T]\times\mathbb{R}^d,\\
        u_{\varepsilon}(0,x)=u_{0,\varepsilon}(x),\quad \partial_{t}u_{\varepsilon}(0,x)=u_{1,\varepsilon}(x), \quad x\in\mathbb{R}^d,
        \end{array}
        \right.
    \end{equation}
    and
    \begin{equation}\label{Regularised equation for u tilde}
        \left\lbrace
        \begin{array}{l}
        \partial_{t}^2\Tilde{u}_{\varepsilon}(t,x) + (-\Delta)^{s}\Tilde{u}_{\varepsilon}(t,x) + \Tilde{m}_{\varepsilon}(x)\Tilde{u}_{\varepsilon}(t,x) + \Tilde{b}_{\varepsilon}(x)\partial_{t}\Tilde{u}_{\varepsilon}(t,x)=\Tilde{f}_{\varepsilon}(t,x), \\ \hfill (t,x)\in [0,T]\times\mathbb{R}^d,\\
        \Tilde{u}_{\varepsilon}(0,x)=\Tilde{u}_{0,\varepsilon}(x),\quad \partial_{t}\Tilde{u}_{\varepsilon}(0,x)=\Tilde{u}_{1,\varepsilon}(x), \quad x\in\mathbb{R}^d,
        \end{array}
        \right.
    \end{equation}
    respectively.
\end{defn}

\begin{thm}\label{Thm1 uniqueness}
    Let $g\equiv 1$. Assume that $m,b \geq 0$, in the sense that there regularisations are non-negative. Under the conditions of Theorem \ref{Thm1 existence1}, the very weak solution to the Cauchy problem \eqref{Equation sing g1} is unique.
\end{thm}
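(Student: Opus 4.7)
The plan is to reduce the uniqueness statement to an application of the non-homogeneous energy estimate in Theorem \ref{thm non-homogeneous 1}, applied to the difference of the two very weak solutions. Set $U_{\varepsilon}(t,x):=u_{\varepsilon}(t,x)-\tilde{u}_{\varepsilon}(t,x)$ and subtract the regularised equations \eqref{Regularised equation for u} and \eqref{Regularised equation for u tilde}. Using the algebraic identity $m_{\varepsilon}u_{\varepsilon}-\tilde{m}_{\varepsilon}\tilde{u}_{\varepsilon}=\tilde{m}_{\varepsilon}U_{\varepsilon}+(m_{\varepsilon}-\tilde{m}_{\varepsilon})u_{\varepsilon}$, and similarly for the damping term, one sees that $U_{\varepsilon}$ solves the Cauchy problem
\begin{equation*}
\left\lbrace
\begin{array}{l}
\partial_{t}^{2}U_{\varepsilon}+(-\Delta)^{s}U_{\varepsilon}+\tilde{m}_{\varepsilon}(x)U_{\varepsilon}+\tilde{b}_{\varepsilon}(x)\partial_{t}U_{\varepsilon}=F_{\varepsilon}(t,x),\\
U_{\varepsilon}(0,x)=u_{0,\varepsilon}(x)-\tilde{u}_{0,\varepsilon}(x),\quad \partial_{t}U_{\varepsilon}(0,x)=u_{1,\varepsilon}(x)-\tilde{u}_{1,\varepsilon}(x),
\end{array}
\right.
\end{equation*}
with source
\begin{equation*}
F_{\varepsilon}(t,x):=\bigl(f_{\varepsilon}-\tilde{f}_{\varepsilon}\bigr)(t,x)-\bigl(m_{\varepsilon}-\tilde{m}_{\varepsilon}\bigr)(x)\,u_{\varepsilon}(t,x)-\bigl(b_{\varepsilon}-\tilde{b}_{\varepsilon}\bigr)(x)\,\partial_{t}u_{\varepsilon}(t,x).
\end{equation*}

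Next I would apply the energy estimate \eqref{Energy estimate non 1} from Theorem \ref{thm non-homogeneous 1} to $U_{\varepsilon}$, which gives
\begin{equation*}
\Vert U_{\varepsilon}(t,\cdot)\Vert_{1}\lesssim \bigl(2+\Vert\tilde{m}_{\varepsilon}\Vert_{L^{\infty}}\bigr)\bigl(1+\Vert\tilde{b}_{\varepsilon}\Vert_{L^{\infty}}+\Vert\tilde{m}_{\varepsilon}\Vert_{L^{\infty}}^{\frac{1}{2}}\bigr)\bigl[\Vert u_{0,\varepsilon}-\tilde{u}_{0,\varepsilon}\Vert_{H^{s}}+\Vert u_{1,\varepsilon}-\tilde{u}_{1,\varepsilon}\Vert_{L^{2}}+\Vert F_{\varepsilon}(t,\cdot)\Vert_{L^{2}}\bigr].
\end{equation*}
The coefficient nets $(\tilde{m}_{\varepsilon})$, $(\tilde{b}_{\varepsilon})$ are $L^{\infty}$-moderate by hypothesis, so the prefactor is of polynomial order $\varepsilon^{-K}$ for some $K\in\mathbb{N}^{\ast}$. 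By the negligibility assumptions in Definition \ref{Defn1 uniqueness}, the first two terms in the bracket are $L^{2}$- and $H^{s}$-negligible, respectively.

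The remaining task is to show that $\Vert F_{\varepsilon}(t,\cdot)\Vert_{L^{2}}$ is negligible, uniformly in $t\in[0,T]$. Using the triangle inequality and H\"older's inequality one bounds
\begin{equation*}
\Vert F_{\varepsilon}(t,\cdot)\Vert_{L^{2}}\leq \Vert f_{\varepsilon}(t,\cdot)-\tilde{f}_{\varepsilon}(t,\cdot)\Vert_{L^{2}}+\Vert m_{\varepsilon}-\tilde{m}_{\varepsilon}\Vert_{L^{\infty}}\Vert u_{\varepsilon}(t,\cdot)\Vert_{L^{2}}+\Vert b_{\varepsilon}-\tilde{b}_{\varepsilon}\Vert_{L^{\infty}}\Vert\partial_{t}u_{\varepsilon}(t,\cdot)\Vert_{L^{2}}.
\end{equation*}
The first summand is negligible by assumption. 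For the remaining two, the differences of coefficients are $L^{\infty}$-negligible, while $\Vert u_{\varepsilon}(t,\cdot)\Vert_{L^{2}}$ and $\Vert\partial_{t}u_{\varepsilon}(t,\cdot)\Vert_{L^{2}}$ are bounded by $\Vert u_{\varepsilon}(t,\cdot)\Vert_{1}$, which is $C_{1}$-moderate since $(u_{\varepsilon})_{\varepsilon}$ is a very weak solution. The product of a negligible net with a moderate net is negligible, so $\Vert F_{\varepsilon}(t,\cdot)\Vert_{L^{2}}$ is negligible uniformly in $t$. Combining everything, the right-hand side of the energy estimate is a product of a polynomially moderate factor with a negligible factor, hence negligible, which in particular yields the $L^{2}$-negligibility of $(U_{\varepsilon}(t,\cdot))_{\varepsilon}$ for every $t\in[0,T]$.

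The main conceptual obstacle is the algebraic rearrangement that recasts $U_{\varepsilon}$ as the solution of a problem with \emph{one} fixed pair of coefficients $(\tilde{m}_{\varepsilon},\tilde{b}_{\varepsilon})$ and a modified source term absorbing the coefficient discrepancies; once this is done, the argument reduces to bookkeeping of orders of $\varepsilon$ and an appeal to Theorem \ref{thm non-homogeneous 1}. A minor technical point is the uniformity in $t$ of the negligibility of $F_{\varepsilon}$, which is why one needs moderateness of $(u_{\varepsilon})_{\varepsilon}$ in the $\|\cdot\|_{1}$-norm rather than merely at a fixed time.
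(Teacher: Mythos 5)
Your proof is correct and takes essentially the same approach as the paper. The paper absorbs the coefficient discrepancies into a source term $h_\varepsilon=(f_\varepsilon-\tilde f_\varepsilon)+(\tilde m_\varepsilon-m_\varepsilon)\tilde u_\varepsilon+(\tilde b_\varepsilon-b_\varepsilon)\partial_t\tilde u_\varepsilon$ and keeps $(m_\varepsilon,b_\varepsilon)$ as coefficients, then explicitly runs Duhamel's principle together with the homogeneous estimate \eqref{Energy estimate g1}; you make the symmetric choice (keeping $(\tilde m_\varepsilon,\tilde b_\varepsilon)$, putting the discrepancy against $u_\varepsilon$ rather than $\tilde u_\varepsilon$ in the source) and then invoke the already-proved non-homogeneous estimate \eqref{Energy estimate non 1} as a black box rather than re-deriving it. Since that theorem is itself proved via Duhamel's principle and the homogeneous estimate, the two arguments are equivalent, but yours is a slightly more modular packaging of the same idea, and it correctly identifies the role of the $C_1$-moderateness (supremum over $t$) in getting uniformity of the negligibility of the source.
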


\begin{proof}
    Let $(u_{\varepsilon})_{\varepsilon}$ and $(\Tilde{u}_{\varepsilon})_{\varepsilon}$ be very weak solutions to \eqref{Equation sing g1}, and assume that the nets $(m_{\varepsilon}-\Tilde{m}_{\varepsilon})_{\varepsilon}$, $(b_{\varepsilon}-\Tilde{b}_{\varepsilon})_{\varepsilon}$, $(f_{\varepsilon}-\Tilde{f}_{\varepsilon})_{\varepsilon}$ and $(u_{0,\varepsilon}-\Tilde{u}_{0,\varepsilon})_{\varepsilon}$, $(u_{1,\varepsilon}-\Tilde{u}_{1,\varepsilon})_{\varepsilon}$  are $L^{\infty}(\mathbb{R}^d)$, $L^{\infty}(\mathbb{R}^d)$, $L^2(\mathbb{R}^d)$, $H^{s}(\mathbb{R}^d)$, $L^2(\mathbb{R}^d)$-negligible, respectively. The function $U_{\varepsilon}(t,x)$ defined by
    \begin{equation*}
        U_{\varepsilon}(t,x):=u_{\varepsilon}(t,x)-\Tilde{u}_{\varepsilon}(t,x),
    \end{equation*}
    satisfies
    \begin{equation}\label{Equation1 U}
        \left\lbrace
        \begin{array}{l}
        \partial_{t}^2U_{\varepsilon}(t,x) + (-\Delta)^{s}U_{\varepsilon}(t,x) + m_{\varepsilon}(x)U_{\varepsilon}(t,x) + b_{\varepsilon}(x)\partial_{t}U_{\varepsilon}(t,x)=h_{\varepsilon}(t,x),\\
        U_{\varepsilon}(0,x)=(u_{0,\varepsilon}-\Tilde{u}_{0,\varepsilon})(x),\quad \partial_{t}U_{\varepsilon}(0,x)=(u_{1,\varepsilon}-\Tilde{u}_{1,\varepsilon})(x),
        \end{array}
        \right.
    \end{equation}
    for $(t,x)\in [0,T]\times\mathbb{R}^d$, where
    \begin{align}
        h_{\varepsilon}(t,x)&:=\big(f_{\varepsilon}(t,x)-\Tilde{f}_{\varepsilon}(t,x)\big) +\big(\Tilde{m}_{\varepsilon}(x)-m_{\varepsilon}(x)\big)\Tilde{u}_{\varepsilon}(t,x)   \nonumber \\ &\quad+ \big(\Tilde{b}_{\varepsilon}(x)-b_{\varepsilon}(x)\big)\partial_{t}\Tilde{u}_{\varepsilon}(t,x).\nonumber
    \end{align}
    According to Duhamel's principle (Theorem \ref{Thm Duhamel}), the solution to \eqref{Equation1 U} has the following representation
    \begin{equation}\label{Duhamel1 solution}
        U_{\varepsilon}(t,x) = W_{\varepsilon}(t,x) + \int_{0}^{t}V_{\varepsilon}(t,x;\tau)\d \tau,
    \end{equation}
    where $W_{\varepsilon}(t,x)$ is the solution to the homogeneous problem
    \begin{equation}\label{Equation1 W 1}
        \bigg\{
        \begin{array}{l}
        \partial_{t}^2W_{\varepsilon}(t,x) + (-\Delta)^{s}W_{\varepsilon}(t,x) + m_{\varepsilon}(x)W_{\varepsilon}(t,x) + b_{\varepsilon}(x)\partial_{t}W_{\varepsilon}(t,x)=0,\\
        W_{\varepsilon}(0,x)=(u_{0,\varepsilon}-\Tilde{u}_{0,\varepsilon})(x),\quad \partial_{t}W_{\varepsilon}(0,x)=(u_{1,\varepsilon}-\Tilde{u}_{1,\varepsilon})(x),
        \end{array}
    \end{equation}
    for $(t,x)\in [0,T]\times\mathbb{R}^d$, and $V_{\varepsilon}(t,x;\tau)$ solves
    \begin{equation}\label{Equation1 V}
        \bigg\{
        \begin{array}{l}
        \partial_{t}^2V_{\varepsilon}(t,x;\tau) + (-\Delta)^{s}V_{\varepsilon}(t,x;\tau) + m_{\varepsilon}(x)V_{\varepsilon}(t,x;\tau) + b_{\varepsilon}(x)\partial_{t}V_{\varepsilon}(t,x;\tau)=0,\\
        V_{\varepsilon}(\tau,x;\tau)=0,\quad \partial_{t}V_{\varepsilon}(\tau,x;\tau)=h_{\varepsilon}(\tau,x),
        \end{array}
    \end{equation}
    for $(t,x)\in [\tau,T]\times\mathbb{R}^d$ and $\tau \in [0,T]$.
    By taking the $L^2$-norm in both sides of \eqref{Duhamel1 solution} and using Minkowski's integral inequality, we get
    \begin{equation}\label{Duhamel1 solution estimate}
        \Vert U_{\varepsilon}(t,\cdot)\Vert_{L^2} \leq \Vert W_{\varepsilon}(t,\cdot)\Vert_{L^2} + \int_{0}^{t}\Vert V_{\varepsilon}(t,\cdot;\tau)\Vert_{L^2}\d\tau.
    \end{equation}
    The energy estimate \eqref{Energy estimate g1} allows us to control $\Vert W_{\varepsilon}(t,\cdot)\Vert_{L^2}$ and $\Vert V_{\varepsilon}(t,\cdot;\tau)\Vert_{L^2}$. We have
    \begin{equation*}
        \Vert W_{\varepsilon}(t,\cdot)\Vert_{L^2} \lesssim \Big(2+ \Vert m_{\varepsilon}\Vert_{L^{\infty}}\Big) \Big(1+ \Vert b_{\varepsilon} \Vert_{L^{\infty}} +  \Vert m_{\varepsilon} \Vert_{L^{\infty}}^{\frac{1}{2}}\Big) \bigg[\Vert u_{0,\varepsilon}-\Tilde{u}_{0,\varepsilon}\Vert_{H^s} + \Vert u_{1,\varepsilon}-\Tilde{u}_{1,\varepsilon}\Vert_{L^2}\bigg],
    \end{equation*}
    and
    \begin{equation*}
        \Vert V_{\varepsilon}(t,\cdot;\tau)\Vert_{L^2} \lesssim\Big(2 + \Vert m_{\varepsilon}\Vert_{L^{\infty}}\Big) \Big(1+ \Vert b_{\varepsilon} \Vert_{L^{\infty}} +  \Vert m_{\varepsilon} \Vert_{L^{\infty}}^{\frac{1}{2}}\Big)\bigg[\Vert h_{\varepsilon}(\tau,\cdot)\Vert_{L^2}\bigg].
    \end{equation*}
    By taking into consideration that $t\in[0,T]$, it follows from \eqref{Duhamel1 solution estimate} that
    \begin{align}\label{Estimate U 1}
        \Vert U_{\varepsilon}(t,\cdot)\Vert_{L^2} &\lesssim \Big(2+ \Vert m_{\varepsilon}\Vert_{L^{\infty}}\Big) \Big(1+ \Vert b_{\varepsilon} \Vert_{L^{\infty}} +  \Vert m_{\varepsilon} \Vert_{L^{\infty}}^{\frac{1}{2}}\Big) \\ &\quad \times \bigg[\Vert u_{0,\varepsilon}-\Tilde{u}_{0,\varepsilon}\Vert_{H^s} +
        \Vert u_{1,\varepsilon}-\Tilde{u}_{1,\varepsilon}\Vert_{L^2} + \int_{0}^{T}\Vert h_{\varepsilon}(\tau,\cdot)\Vert_{L^2}\bigg] ,\nonumber
    \end{align}
    where $\Vert h_{\varepsilon}(\tau,\cdot)\Vert_{L^2}$ is estimated as follows,
    \begin{align}\label{Estimate f_epsilon}
        \Vert h_{\varepsilon}(\tau,\cdot)\Vert_{L^2} & \leq \Vert\big(f_{\varepsilon}(\tau,\cdot)-\Tilde{f}_{\varepsilon}(\tau,\cdot)\big) \Vert_{L^2} +\Vert \big(\Tilde{m}_{\varepsilon}(\cdot)-m_{\varepsilon}(\cdot)\big)\Tilde{u}_{\varepsilon}(\tau,\cdot) \Vert_{L^2}    \\ &\quad+\Vert \big(\Tilde{b}_{\varepsilon}(\cdot)-b_{\varepsilon}(\cdot)\big)\partial_{t}\Tilde{u}_{\varepsilon}(\tau,\cdot)\Vert_{L^2} \nonumber\\
        & \leq \Vert f_{\varepsilon}(\tau,\cdot) - \Tilde{f}_{\varepsilon}(\tau,\cdot)\Vert_{L^2} +\Vert \Tilde{m}_{\varepsilon} - m_{\varepsilon}\Vert_{L^{\infty}}\Vert \Tilde{u}_{\varepsilon}(\tau,\cdot)\Vert_{L^2} \nonumber \\ &\quad + \Vert \Tilde{b}_{\varepsilon} - b_{\varepsilon}\Vert_{L^{\infty}}\Vert \partial_{t}\Tilde{u}_{\varepsilon}(\tau,\cdot)\Vert_{L^2} \nonumber.
    \end{align}
    On the one hand, the nets 
    $(m_{\varepsilon})_{\varepsilon}$, $(b_{\varepsilon})_{\varepsilon}$ are $L^{\infty}$-moderate by assumption, and the net $(\Tilde{u}_{\varepsilon})_{\varepsilon}$ is $C_1$-moderate being a very weak solution to \eqref{Regularised equation for u tilde}. From the other hand, the nets $(m_{\varepsilon}-\Tilde{m}_{\varepsilon})_{\varepsilon}$, $(b_{\varepsilon}-\Tilde{b}_{\varepsilon})_{\varepsilon}$, $(f_{\varepsilon}-\Tilde{f}_{\varepsilon})_{\varepsilon}$ and $(u_{0,\varepsilon}-\Tilde{u}_{0,\varepsilon})_{\varepsilon}$, $(u_{1,\varepsilon}-\Tilde{u}_{1,\varepsilon})_{\varepsilon}$ are $L^{\infty}(\mathbb{R}^d)$,$L^{\infty}(\mathbb{R}^d)$, $L^2(\mathbb{R}^d)$, $H^{s}(\mathbb{R}^d)$, $L^{2}(\mathbb{R}^d)$-negligible. It follows from \eqref{Estimate U 1} combined with \eqref{Estimate f_epsilon} that
    \begin{equation*}
        \Vert U_{\varepsilon}(t,\cdot)\Vert_{L^2} \lesssim \varepsilon^{k},
    \end{equation*}
    for all $k>0$, showing the uniqueness of the very weak solution.    
\end{proof}
The analogue to Definition \ref{Defn1 uniqueness} and Theorem \ref{Thm1 uniqueness} in the case when $d>2s$ with Theorem \ref{Thm2 existence1}'s background, read:

\begin{defn}\label{Defn2 uniqueness}
    Let $g\equiv 1$. We say that the Cauchy problem \eqref{Equation sing g1} has a unique very weak solution, if for all families of regularisations  $(m_{\varepsilon})_{\varepsilon}$, $(\Tilde{m}_{\varepsilon})_{\varepsilon}$ and $(b_{\varepsilon})_{\varepsilon}$, $(\Tilde{b}_{\varepsilon})_{\varepsilon}$ for the equation coefficients  $m$ and $b$, and for families of regularisation $(f_{\varepsilon})_{\varepsilon}$, $(\Tilde{f}_{\varepsilon})_{\varepsilon}$ for the source term, and families of regularisations $(u_{0,\varepsilon})_{\varepsilon}$, $(\Tilde{u}_{0,\varepsilon})_{\varepsilon}$ and  $(u_{1,\varepsilon})_{\varepsilon}$, $(\Tilde{u}_{1,\varepsilon})_{\varepsilon}$ for the Cauchy data $u_0$ and $u_1$, such that the nets $(m_{\varepsilon}-\Tilde{m}_{\varepsilon})_{\varepsilon}$, $(b_{\varepsilon}-\Tilde{b}_{\varepsilon})_{\varepsilon}$,  $(f_{\varepsilon}-\Tilde{f}_{\varepsilon})_{\varepsilon}$ and $(u_{0,\varepsilon}-\Tilde{u}_{0,\varepsilon})_{\varepsilon}$, $(u_{1,\varepsilon}-\Tilde{u}_{1,\varepsilon})_{\varepsilon}$ are $\big\{(L^{\frac{d}{s}}(\mathbb{R}^d)\cap L^{\frac{d}{2s}}(\mathbb{R}^d)),L^{\frac{d}{s}}(\mathbb{R}^d),L^2 (\mathbb{R}^d), H^{2s}(\mathbb{R}^d), H^{s}(\mathbb{R}^d)\big\}$-negligible, it follows that the net \\ $\big(u_{\varepsilon}(t,\cdot)-\Tilde{u}_{\varepsilon}(t,\cdot)\big)_{\varepsilon\in (0,1]}$, is $L^2(\mathbb{R}^d)$-negligible for all $t\in [0,T]$, where $(u_{\varepsilon})_{\varepsilon}$ and $(\Tilde{u}_{\varepsilon})_{\varepsilon}$ are the families of solutions to the corresponding regularised Cauchy problems.
\end{defn}

\begin{thm}\label{Thm2 uniqueness}
    Let $d>2s$. Assume that $g\equiv 1$ and $m,b \geq 0$. With the assumptions of Theorem \ref{Thm2 existence1}, the very weak solution to the Cauchy problem \eqref{Equation sing g1} is unique.
\end{thm}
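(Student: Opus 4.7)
The plan is to mimic closely the proof of Theorem \ref{Thm1 uniqueness}, but now using Theorem \ref{thm non-homogeneous 1} (in particular the estimate \eqref{Energy1 aux1+1 non 1}) as the underlying energy estimate and using H\"older's inequality with the Sobolev embedding of Proposition \ref{Prop. Sobolev estimate} in place of $L^\infty$ bounds on the coefficients. Set $U_\varepsilon := u_\varepsilon - \tilde u_\varepsilon$. A direct computation shows that $U_\varepsilon$ solves the Cauchy problem
\begin{equation*}
\bigg\{
\begin{array}{l}
\partial_t^2 U_\varepsilon + (-\Delta)^s U_\varepsilon + m_\varepsilon(x) U_\varepsilon + b_\varepsilon(x) \partial_t U_\varepsilon = h_\varepsilon(t,x),\\
U_\varepsilon(0,x)=(u_{0,\varepsilon}-\tilde u_{0,\varepsilon})(x),\quad \partial_t U_\varepsilon(0,x)=(u_{1,\varepsilon}-\tilde u_{1,\varepsilon})(x),
\end{array}
\end{equation*}
with source term
\begin{equation*}
h_\varepsilon(t,x):= (f_\varepsilon-\tilde f_\varepsilon)(t,x) + (\tilde m_\varepsilon - m_\varepsilon)(x)\,\tilde u_\varepsilon(t,x) + (\tilde b_\varepsilon - b_\varepsilon)(x)\,\partial_t\tilde u_\varepsilon(t,x).
\end{equation*}

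By Duhamel's principle (Theorem \ref{Thm Duhamel}) we write $U_\varepsilon(t,x) = W_\varepsilon(t,x) + \int_0^t V_\varepsilon(t,x;\tau)\,\d\tau$, where $W_\varepsilon$ solves the homogeneous problem with data $(u_{0,\varepsilon}-\tilde u_{0,\varepsilon}, u_{1,\varepsilon}-\tilde u_{1,\varepsilon})$ and $V_\varepsilon(\cdot,\cdot;\tau)$ solves the analogous auxiliary problem with data $(0, h_\varepsilon(\tau,\cdot))$. Taking $L^2$-norms, using Minkowski's integral inequality and then applying the energy estimate \eqref{Energy1 aux1+1 non 1} to both $W_\varepsilon$ and $V_\varepsilon$ (with the source term of the auxiliary problem being zero, so the $f$-term in \eqref{Energy1 aux1+1 non 1} does not appear for $W_\varepsilon$ and $V_\varepsilon$ individually), we obtain
\begin{equation*}
\Vert U_\varepsilon(t,\cdot)\Vert_{L^2} \lesssim K_\varepsilon\Big[\Vert u_{0,\varepsilon}-\tilde u_{0,\varepsilon}\Vert_{H^{2s}} + \Vert u_{1,\varepsilon}-\tilde u_{1,\varepsilon}\Vert_{H^{s}} + \int_0^T \Vert h_\varepsilon(\tau,\cdot)\Vert_{L^2}\,\d\tau\Big],
\end{equation*}
where $K_\varepsilon$ is the polynomial factor in $\Vert m_\varepsilon\Vert_{L^{d/s}}, \Vert m_\varepsilon\Vert_{L^{d/2s}}, \Vert b_\varepsilon\Vert_{L^{d/s}}$ from \eqref{Energy1 aux1+1 non 1}, hence moderate by assumption.

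The main obstacle is bounding $\Vert h_\varepsilon(\tau,\cdot)\Vert_{L^2}$: since the differences $(\tilde m_\varepsilon - m_\varepsilon)$ and $(\tilde b_\varepsilon - b_\varepsilon)$ are only assumed negligible in $L^{d/s}$ (and not in $L^\infty$), I cannot simply apply the crude trick used in Theorem \ref{Thm1 uniqueness}. Instead I would apply H\"older's inequality (Proposition \ref{Holder inequality}) with $p = d/s$ and $q = 2d/(d-2s)$, noting that $\tfrac{1}{p}+\tfrac{1}{q}=\tfrac{1}{2}$, followed by the fractional Sobolev inequality (Proposition \ref{Prop. Sobolev estimate}), obtaining
\begin{equation*}
\Vert (\tilde m_\varepsilon - m_\varepsilon)\tilde u_\varepsilon(\tau,\cdot)\Vert_{L^2} \lesssim \Vert \tilde m_\varepsilon - m_\varepsilon\Vert_{L^{d/s}}\,\Vert (-\Delta)^{s/2}\tilde u_\varepsilon(\tau,\cdot)\Vert_{L^2},
\end{equation*}
and analogously
\begin{equation*}
\Vert (\tilde b_\varepsilon - b_\varepsilon)\partial_t\tilde u_\varepsilon(\tau,\cdot)\Vert_{L^2} \lesssim \Vert \tilde b_\varepsilon - b_\varepsilon\Vert_{L^{d/s}}\,\Vert (-\Delta)^{s/2}\partial_t\tilde u_\varepsilon(\tau,\cdot)\Vert_{L^2}.
\end{equation*}

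The two $H^s$-type norms on the right are controlled by $\Vert \tilde u_\varepsilon(\tau,\cdot)\Vert_2$, which is moderate (at most $\varepsilon^{-N}$) since $(\tilde u_\varepsilon)_\varepsilon$ is a very weak solution and hence $C_2$-moderate; meanwhile $\Vert \tilde m_\varepsilon - m_\varepsilon\Vert_{L^{d/s}}$ and $\Vert \tilde b_\varepsilon - b_\varepsilon\Vert_{L^{d/s}}$ are negligible in $\varepsilon$ by hypothesis, and so are $\Vert f_\varepsilon - \tilde f_\varepsilon\Vert_{L^2}$ and the initial-data differences. The product of a negligible net by a moderate one being again negligible, we conclude that $\Vert h_\varepsilon(\tau,\cdot)\Vert_{L^2}$ is $\varepsilon$-negligible uniformly in $\tau\in[0,T]$. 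Plugging this back into the Duhamel estimate and absorbing the moderate factor $K_\varepsilon$ yields $\Vert U_\varepsilon(t,\cdot)\Vert_{L^2} \lesssim \varepsilon^k$ for every $k>0$, which is exactly the required negligibility and therefore proves uniqueness in the sense of Definition \ref{Defn2 uniqueness}.
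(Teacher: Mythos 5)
Your proposal is correct and matches the paper's proof essentially line by line: the same decomposition $U_\varepsilon = u_\varepsilon - \tilde u_\varepsilon$ with source $h_\varepsilon$, the same Duhamel representation, the same application of H\"older's inequality with exponents $p=d/s$, $q=2d/(d-2s)$ followed by the fractional Sobolev inequality to bound $h_\varepsilon$, and the same use of $C_2$-moderateness of $(\tilde u_\varepsilon)_\varepsilon$ paired with the negligibility assumptions. The only cosmetic difference is that you invoke the non-homogeneous estimate \eqref{Energy1 aux1+1 non 1} with $f\equiv 0$, whereas the paper cites the homogeneous estimate \eqref{Energy estimate1} directly — but these coincide when the source vanishes, so the argument is the same.
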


\begin{proof}
 Let $(u_{\varepsilon})_{\varepsilon}$ and $(\Tilde{u}_{\varepsilon})_{\varepsilon}$ be very weak solutions to \eqref{Equation sing g1}, and assume that the nets $(m_{\varepsilon}-\Tilde{m}_{\varepsilon})_{\varepsilon}$, $(b_{\varepsilon}-\Tilde{b}_{\varepsilon})_{\varepsilon}$, $(f_{\varepsilon}-\Tilde{f}_{\varepsilon})_{\varepsilon}$ and $(u_{0,\varepsilon}-\Tilde{u}_{0,\varepsilon})_{\varepsilon}$,$(u_{1,\varepsilon}-\Tilde{u}_{1,\varepsilon})_{\varepsilon}$  are $L^{\frac{d}{s}}(\mathbb{R}^d)\cap L^{\frac{d}{2s}}(\mathbb{R}^d)$, $L^{\frac{d}{s}}(\mathbb{R}^d)$,$L^2(\mathbb{R}^d)$, $H^{2s}(\mathbb{R}^d)$, $H^{s}(\mathbb{R}^d)$-negligible, respectively. Let us denote by $U_{\varepsilon}(t,x)$ the function defined by
    \begin{equation*}
        U_{\varepsilon}(t,x):=u_{\varepsilon}(t,x)-\Tilde{u}_{\varepsilon}(t,x).
    \end{equation*}
    It satisfies
    \begin{equation}\label{Equation1 U2}
        \left\lbrace
        \begin{array}{l}
        \partial_{t}^2U_{\varepsilon}(t,x) + (-\Delta)^{s}U_{\varepsilon}(t,x) + m_{\varepsilon}(x)U_{\varepsilon}(t,x) + b_{\varepsilon}(x)\partial_{t}U_{\varepsilon}(t,x)=h_{\varepsilon}(t,x),\\
        U_{\varepsilon}(0,x)=(u_{0,\varepsilon}-\Tilde{u}_{0,\varepsilon})(x),\quad \partial_{t}U_{\varepsilon}(0,x)=(u_{1,\varepsilon}-\Tilde{u}_{1,\varepsilon})(x),
        \end{array}
        \right.
    \end{equation}
    for $(t,x)\in [0,T]\times\mathbb{R}^d$, where
    \begin{align}
        h_{\varepsilon}(t,x)&:=\big(f_{\varepsilon}(t,x)-\Tilde{f}_{\varepsilon}(t,x)\big) +\big(\Tilde{m}_{\varepsilon}(x)-m_{\varepsilon}(x)\big)\Tilde{u}_{\varepsilon}(t,x)   \nonumber \\ &\quad+ \big(\Tilde{b}_{\varepsilon}(x)-b_{\varepsilon}(x)\big)\partial_{t}\Tilde{u}_{\varepsilon}(t,x).\nonumber
    \end{align} By repeating the same reasoning as in Theorem \ref{Thm1 uniqueness}, and using Duhamel's principle, the solution to \eqref{Equation1 U2} has the following representation
    \begin{equation}\label{Duhamel1 solution 1}
        U_{\varepsilon}(t,x) = W_{\varepsilon}(t,x) + \int_{0}^{t}V_{\varepsilon}(t,x;\tau)\d \tau.
    \end{equation} 
    By taking the $L^2$-norm in both sides of \eqref{Duhamel1 solution 1} and using Minkowski's integral inequality, we get
    \begin{equation}\label{Duhamel1 solution estimate 1}
        \Vert U_{\varepsilon}(t,\cdot)\Vert_{L^2} \leq \Vert W_{\varepsilon}(t,\cdot)\Vert_{L^2} + \int_{0}^{t}\Vert V_{\varepsilon}(t,\cdot;\tau)\Vert_{L^2}\d\tau,
    \end{equation}
 where $\Vert W_{\varepsilon}(t,\cdot)\Vert_{L^2}$ and $\Vert V_{\varepsilon}(t,\cdot;\tau)\Vert_{L^2}$ are estimated by using \eqref{Energy estimate1} as follows:
    \begin{align*}
        \Vert W_{\varepsilon}(t,\cdot)\Vert_{L^2} \lesssim 
        &\Big(1+\Vert m_{\varepsilon}\Vert_{L^{\frac{d}{s}}}\Big)\Big(1+\Vert m_{\varepsilon}\Vert_{L^{\frac{d}{2s}}}\Big)\\
        & \times \Big(1+\Vert b_{\varepsilon}\Vert_{L^{\frac{d}{s}}}\Big)^2 \bigg[\Vert u_{0,\varepsilon}-\Tilde{u}_{0,\varepsilon}\Vert_{H^{2s}} + \Vert u_{1,\varepsilon}-\Tilde{u}_{1,\varepsilon}\Vert_{H^s}\bigg],
    \end{align*}
    and
    \begin{equation*}
        \Vert V_{\varepsilon}(t,\cdot;\tau)\Vert_{L^2} \lesssim \Big(1+\Vert m_{\varepsilon}\Vert_{L^{\frac{d}{s}}}\Big)\Big(1+\Vert m_{\varepsilon}\Vert_{L^{\frac{d}{2s}}}\Big)\Big(1+\Vert b_{\varepsilon}\Vert_{L^{\frac{d}{s}}}\Big)^2\bigg[\Vert h_{\varepsilon}(\tau,\cdot)\Vert_{L^2}\bigg].
    \end{equation*}
    By taking into consideration that $t\in[0,T]$, it follows from \eqref{Duhamel1 solution estimate 1} that
    \begin{align}\label{Estimate U 1+1}
        \Vert U_{\varepsilon}(t,\cdot)\Vert_{L^2} &\lesssim \Big(1+\Vert m_{\varepsilon}\Vert_{L^{\frac{d}{s}}}\Big)\Big(1+\Vert m_{\varepsilon}\Vert_{L^{\frac{d}{2s}}}\Big)\Big(1+\Vert b_{\varepsilon}\Vert_{L^{\frac{d}{s}}}\Big)^2 \\ &\quad \times \bigg[\Vert u_{0,\varepsilon}-\Tilde{u}_{0,\varepsilon}\Vert_{H^{2s}} +
        \Vert u_{1,\varepsilon}-\Tilde{u}_{1,\varepsilon}\Vert_{H^s} + \int_{0}^{T}\Vert h_{\varepsilon}(\tau,\cdot)\Vert_{L^2}\bigg] ,\nonumber
    \end{align}
    where $\Vert h_{\varepsilon}(\tau,\cdot)\Vert_{L^2}$ is estimated as follows
    \begin{align}
        \Vert h_{\varepsilon}(\tau,\cdot)\Vert_{L^2} & \leq \Vert\big(f_{\varepsilon}(\tau,\cdot)-\Tilde{f}_{\varepsilon}(\tau,\cdot)\big) \Vert_{L^2} +\Vert \big(\Tilde{m}_{\varepsilon}(\cdot)-m_{\varepsilon}(\cdot)\big)\Tilde{u}_{\varepsilon}(\tau,\cdot) \Vert_{L^2} \nonumber   \\ &\quad +\Vert \big(\Tilde{b}_{\varepsilon}(\cdot)-b_{\varepsilon}(\cdot)\big)\partial_{t}\Tilde{u}_{\varepsilon}(\tau,\cdot)\Vert_{L^2} \nonumber\\
        & \leq \Vert f_{\varepsilon}(\tau,\cdot) - \Tilde{f}_{\varepsilon}(\tau,\cdot)\Vert_{L^2} +\Vert \Tilde{m}_{\varepsilon} - m_{\varepsilon}\Vert_{L^p}\Vert \Tilde{u}_{\varepsilon}(\tau,\cdot)\Vert_{L^q} \nonumber \\ &\quad + \Vert \Tilde{b}_{\varepsilon} - b_{\varepsilon}\Vert_{L^p}\Vert \partial_{t}\Tilde{u}_{\varepsilon}(\tau,\cdot)\Vert_{L^q} \nonumber.
    \end{align}
      The second and third terms on the right hand side are estimated by using H\"older's inequality for $1<p<\infty$, such that $\frac{1}{p} + \frac{1}{q} = \frac{1}{2}$. Then, if we choose $q=\frac{2d}{d-2s}$ and consequently $p=\frac{d}{s}$, it follows from Proposition \ref{Prop. Sobolev estimate} that 
    \begin{equation*}
      \Vert \Tilde{u}_{\varepsilon}\Vert_{L^q} \lesssim \Vert (-\Delta)^{\frac{s}{2}} \Tilde{u}_{\varepsilon}(\cdot)\Vert_{L^2}\leq \Vert \Tilde{u}_{\varepsilon}\Vert_{H^s},
    \end{equation*}
    and 
    \begin{equation*}
        \Vert \partial_t \Tilde{u}_{\varepsilon}\Vert_{L^q} \lesssim \Vert (-\Delta)^{\frac{s}{2}} \partial_t \Tilde{u}_{\varepsilon}(\cdot)\Vert_{L^2}\leq \Vert \partial_t \Tilde{u}_{\varepsilon}\Vert_{H^s}.
    \end{equation*}
    We get
    \begin{align}\label{Estimate f_epsilon 1}
        \Vert h_{\varepsilon}(\tau,\cdot)\Vert_{L^2}
        & \leq \Vert f_{\varepsilon}(\tau,\cdot) - \Tilde{f}_{\varepsilon}(\tau,\cdot)\Vert_{L^2} +\Vert \Tilde{m}_{\varepsilon} - m_{\varepsilon}\Vert_{L^{\frac{d}{s}}}\Vert \Tilde{u}_{\varepsilon}(\tau,\cdot)\Vert_{H^s}  \\ &\quad + \Vert \Tilde{b}_{\varepsilon} - b_{\varepsilon}\Vert_{L^{\frac{d}{s}}}\Vert \partial_{t}\Tilde{u}_{\varepsilon}(\tau,\cdot)\Vert_{H^s} \nonumber.
    \end{align}
    On the one hand, the nets 
    $(m_{\varepsilon})_{\varepsilon}$, $(b_{\varepsilon})_{\varepsilon}$ are $L^{\frac{d}{s}}\cap L^{\frac{d}{2s}}$, $L^{\frac{d}{s}}$-moderate by assumption, and the net $(\Tilde{u}_{\varepsilon})_{\varepsilon}$ is $C_2$-moderate being a very weak solution to \eqref{Regularised equation for u tilde}. From the other hand, the nets $(m_{\varepsilon}-\Tilde{m}_{\varepsilon})_{\varepsilon}$, $(b_{\varepsilon}-\Tilde{b}_{\varepsilon})_{\varepsilon}$, $(f_{\varepsilon}-\Tilde{f}_{\varepsilon})_{\varepsilon}$ and $(u_{0,\varepsilon}-\Tilde{u}_{0,\varepsilon})_{\varepsilon}$, $(u_{1,\varepsilon}-\Tilde{u}_{1,\varepsilon})_{\varepsilon}$ are $L^{\frac{d}{s}}(\mathbb{R}^d)\cap L^{\frac{d}{2s}}(\mathbb{R}^d)$, $L^{\frac{d}{s}}(\mathbb{R}^d)$,$L^{2}(\mathbb{R}^d)$, $H^{2s}(\mathbb{R}^d)$, $H^{s}(\mathbb{R}^d)$-negligible. It follows from \eqref{Estimate U 1+1} combined with \eqref{Estimate f_epsilon 1} that
    \begin{equation*}
        \Vert U_{\varepsilon}(t,\cdot)\Vert_{L^2} \lesssim \varepsilon^{k},
    \end{equation*}
    for all $k>0$. The uniqueness of the very weak solution follows, ending the proof. 
\end{proof}

\section{Coherence with classical theory}
 Now, let $g\equiv 1$ and the coefficients $m,b,f$, and the Cauchy data $u_0$ and $u_1$ be smooth enough in such a way that a classical solution to 
 \begin{equation}\label{Equation coherence}
        \left\lbrace
        \begin{array}{l}
        u_{tt}(t,x) + (-\Delta)^{s}u(t,x) + m(x)u(t,x) + b(x)u_{t}(t,x)=f(t,x), \\ 
        \hfill (t,x)\in [0,T]\times\mathbb{R}^d,\\
        u(0,x)=u_{0}(x),\quad u_{t}(0,x)=u_{1}(x), \quad x\in\mathbb{R}^d,
        \end{array}
        \right.
\end{equation}
 exists (in the sense of Theorem \ref{thm non-homogeneous 1}). For these coefficients and initial data, the concept of very weak solutions applies (according to Proposition \ref{coherence 1}) and a very weak solution exists. The question to be answered here is: does the very weak solution recapture the classical one?

\begin{thm}\label{Thm1 coherence}
    Let $\psi$ be a Friedrichs-mollifier. Assume $m,b\in L^{\infty}(\mathbb{R}^d)$ be non-negative, $f\in L^2(\mathbb R^d)$ and suppose that $u_0 \in H^{s}(\mathbb{R}^d)$ and $u_1 \in L^{2}(\mathbb{R}^d)$, in such a way that a classical solution to \eqref{Equation coherence} exists. Then, for any regularising families $(m_{\varepsilon})_{\varepsilon}=(m\ast\psi_{\varepsilon})_{\varepsilon}$ and $(b_{\varepsilon})_{\varepsilon}=(b\ast\psi_{\varepsilon})_{\varepsilon}$ for the equation coefficients, satisfying
    \begin{equation}\label{approx.condition1}
        \lVert m_{\varepsilon} - m\rVert_{L^{\infty}} \rightarrow 0,\quad\text{and}\quad \lVert b_{\varepsilon} - b\rVert_{L^{\infty}} \rightarrow 0,
    \end{equation} 
    and any regularising families $(f_{\varepsilon})_{\varepsilon}=(f\ast\psi_{\varepsilon})_{\varepsilon}$ for the source term, and any regularising families $(u_{0,\varepsilon})_{\varepsilon}=(u_{0}\ast\psi_{\varepsilon})_{\varepsilon}$ and $(u_{1,\varepsilon})_{\varepsilon}=(u_{1}\ast\psi_{\varepsilon})_{\varepsilon}$ for the initial data, the net $(u_{\varepsilon})_{\varepsilon}$ converges to the classical solution of the Cauchy problem (\ref{Equation coherence}) in $L^{2}$ as $\varepsilon \rightarrow 0$.
\end{thm}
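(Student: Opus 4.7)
The plan is to form the difference $V_\varepsilon := u_\varepsilon - u$ between the net of regularised solutions and the classical solution, and to reduce its $L^2$-decay to the non-homogeneous energy estimate \eqref{Energy estimate non 1} already proved in Theorem \ref{thm non-homogeneous 1}.

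First, I would subtract \eqref{Equation coherence} from the regularised equation \eqref{Regularised equation g1}, after rewriting $m_\varepsilon u_\varepsilon = m_\varepsilon V_\varepsilon + m_\varepsilon u$ and $b_\varepsilon \partial_t u_\varepsilon = b_\varepsilon \partial_t V_\varepsilon + b_\varepsilon u_t$, to see that $V_\varepsilon$ satisfies a Cauchy problem of the same form as \eqref{Equation coherence} with coefficients $(m_\varepsilon, b_\varepsilon)$, initial data $(u_{0,\varepsilon} - u_0, u_{1,\varepsilon} - u_1)$, and source term
\[
F_\varepsilon(t,x) := \big(f_\varepsilon - f\big)(t,x) + \big(m - m_\varepsilon\big)(x)\,u(t,x) + \big(b - b_\varepsilon\big)(x)\,u_t(t,x).
\]
This problem is precisely of the type covered by Theorem \ref{thm non-homogeneous 1} (case $g\equiv 1$).

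Second, I would apply estimate \eqref{Energy estimate non 1} to $V_\varepsilon$ to get
\[
\Vert V_\varepsilon(t,\cdot)\Vert_1 \lesssim \big(2 + \Vert m_\varepsilon\Vert_{L^\infty}\big)\big(1 + \Vert b_\varepsilon\Vert_{L^\infty} + \Vert m_\varepsilon\Vert_{L^\infty}^{1/2}\big) \Big[\Vert u_{0,\varepsilon} - u_0\Vert_{H^s} + \Vert u_{1,\varepsilon} - u_1\Vert_{L^2} + \Vert F_\varepsilon(t,\cdot)\Vert_{L^2}\Big].
\]
Assumption \eqref{approx.condition1} forces $\Vert m_\varepsilon\Vert_{L^\infty}$ and $\Vert b_\varepsilon\Vert_{L^\infty}$ to remain bounded uniformly in $\varepsilon$, so the prefactor is $O(1)$. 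For the source term, the triangle inequality and the elementary bound $\Vert \varphi h\Vert_{L^2}\leq \Vert \varphi\Vert_{L^\infty}\Vert h\Vert_{L^2}$ give
\[
\Vert F_\varepsilon(t,\cdot)\Vert_{L^2} \leq \Vert f_\varepsilon(t,\cdot) - f(t,\cdot)\Vert_{L^2} + \Vert m - m_\varepsilon\Vert_{L^\infty}\Vert u(t,\cdot)\Vert_{L^2} + \Vert b - b_\varepsilon\Vert_{L^\infty}\Vert u_t(t,\cdot)\Vert_{L^2},
\]
where $\Vert u(t,\cdot)\Vert_{L^2}$ and $\Vert u_t(t,\cdot)\Vert_{L^2}$ are uniformly bounded on $[0,T]$ because $u$ is the classical solution in $C([0,T];H^s)\cap C^1([0,T];L^2)$.

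Third and finally, I would invoke standard properties of mollifiers: since $u_0 \in H^s$, $u_1 \in L^2$, and $f(t,\cdot) \in L^2$, convolution with $\psi_\varepsilon$ produces approximating families with $\Vert u_{0,\varepsilon} - u_0\Vert_{H^s} \to 0$, $\Vert u_{1,\varepsilon} - u_1\Vert_{L^2} \to 0$, and $\Vert f_\varepsilon(t,\cdot) - f(t,\cdot)\Vert_{L^2} \to 0$; the hypothesis \eqref{approx.condition1} gives the remaining two coefficient terms. Combining these convergences with the uniform boundedness of the prefactor yields $\Vert V_\varepsilon(t,\cdot)\Vert_1 \to 0$, and in particular $\Vert u_\varepsilon(t,\cdot) - u(t,\cdot)\Vert_{L^2} \to 0$, which is the claim. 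I do not anticipate a serious obstacle: the only mild point of care is that $F_\varepsilon$ depends on $t$, and one must verify that its $L^2$-norm is controlled uniformly on $[0,T]$, which follows from the continuity of $u,u_t$ in $L^2$ and the $x$-uniformity of \eqref{approx.condition1}.
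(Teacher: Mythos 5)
Your proof is correct and follows essentially the same strategy as the paper: form $V_\varepsilon = u_\varepsilon - u$, observe that it solves a problem of the same type with coefficients $(m_\varepsilon,b_\varepsilon)$, initial data $(u_{0,\varepsilon}-u_0, u_{1,\varepsilon}-u_1)$ and source $F_\varepsilon = (f_\varepsilon - f) + (m-m_\varepsilon)u + (b-b_\varepsilon)u_t$, then conclude from the energy estimate. The only difference is organizational: the paper re-runs the Duhamel decomposition inside the coherence proof and applies the homogeneous estimate \eqref{Energy estimate g1} to each Duhamel piece, while you invoke the already-proved non-homogeneous estimate \eqref{Energy estimate non 1} directly, which packages the same Duhamel argument more cleanly.
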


\begin{proof}
    Let $(u_{\varepsilon})_{\varepsilon}$ be the very weak solution given by Theorem \ref{Thm1 existence1} and $u$ the classical one, as in Theorem \ref{thm non-homogeneous 1}. The classical solution satisfies
    \begin{equation}\label{Equation coherence classic}
        \left\lbrace
        \begin{array}{l}
        u_{tt}(t,x) + (-\Delta)^{s}u(t,x) + m(x)u(t,x) + b(x)u_{t}(t,x)=f(t,x), \\
        \hfill (t,x)\in [0,T]\times\mathbb{R}^d,\\
        u(0,x)=u_{0}(x),\quad u_{t}(0,x)=u_{1}(x), \quad x\in\mathbb{R}^d,
        \end{array}
        \right.
    \end{equation}
    and $(u_{\varepsilon})_{\varepsilon}$ solves
    \begin{equation}\label{Equation coherence VWS}
        \left\lbrace
        \begin{array}{l}
        \partial_{t}^2u_{\varepsilon}(t,x) + (-\Delta)^{s}u_{\varepsilon}(t,x) + m_{\varepsilon}(x)u_{\varepsilon}(t,x) + b_{\varepsilon}(x)\partial_{t}u_{\varepsilon}(t,x)=f_{\varepsilon}(t,x), \\ 
        \hfill (t,x)\in [0,T]\times\mathbb{R}^d,\\
        u_{\varepsilon}(0,x)=u_{0,\varepsilon}(x),\quad \partial_{t}u_{\varepsilon}(0,x)=u_{1,\varepsilon}(x), \quad x\in\mathbb{R}^d.
        \end{array}
        \right.
    \end{equation}
    By denoting $U_{\varepsilon}(t,x):=u_{\varepsilon}(t,x)-u(t,x)$, then $U_{\varepsilon}$ solves the Cauchy problem
    \begin{equation}\label{Equation coherence V}
        \left\lbrace
        \begin{array}{l}
        \partial_{t}^2U_{\varepsilon}(t,x) + (-\Delta)^{s}U_{\varepsilon}(t,x) + m_{\varepsilon}(x)U_{\varepsilon}(t,x) + b_{\varepsilon}(x)\partial_{t}U_{\varepsilon}(t,x)=\rho_{\varepsilon}(t,x),\\
        \hfill  (t,x)\in [0,T]\times\mathbb{R}^d,\\
        U_{\varepsilon}(0,x)=(u_{0,\varepsilon}-u_0)(x),\quad \partial_{t}U_{\varepsilon}(0,x)=(u_{1,\varepsilon}-u_1)(x),\, \, x\in \mathbb R^d,
        \end{array}
        \right.
    \end{equation}
    where
    \begin{equation}
        \rho_{\varepsilon}(t,x):=(f_{\varepsilon}(t,x)-f(t,x)) -\big(m_{\varepsilon}(x)-m(x)\big)u(t,x) - \big(b_{\varepsilon}(x)-b(x)\big)\partial_{t}u(t,x).
    \end{equation}
    Thanks to Duhamel's principle, $U_{\varepsilon}$ can be represented by
    \begin{equation}\label{Duhamel2 solution}
        U_{\varepsilon}(t,x) = W_{\varepsilon}(t,x) + \int_{0}^{t}V_{\varepsilon}(t,x;\tau)d\tau,
    \end{equation}
    where $W_{\varepsilon}(t,x)$ is the solution to the homogeneous problem
    \begin{equation}\label{Equation2 W}
        \bigg\{
        \begin{array}{l}
        \partial_{t}^2W_{\varepsilon}(t,x) + (-\Delta)^{s}W_{\varepsilon}(t,x) + m_{\varepsilon}(x)W_{\varepsilon}(t,x) + b_{\varepsilon}(x)\partial_{t}W_{\varepsilon}(t,x)=0,\\
        W_{\varepsilon}(0,x)=(u_{0,\varepsilon}-u_0)(x),\quad \partial_{t}W_{\varepsilon}(0,x)=(u_{1,\varepsilon}-u_1)(x),
        \end{array}
    \end{equation}
    for $(t,x)\in [0,T]\times\mathbb{R}^d$, and $V_{\varepsilon}(t,x;\tau)$ solves
    \begin{equation}\label{Equation2 V}
        \bigg\{
        \begin{array}{l}
        \partial_{t}^2V_{\varepsilon}(t,x;\tau) + (-\Delta)^{s}V_{\varepsilon}(t,x;\tau) + m_{\varepsilon}(x)V_{\varepsilon}(t,x;\tau) + b_{\varepsilon}(x)\partial_{t}V_{\varepsilon}(t,x;\tau)=0,\\
        V_{\varepsilon}(\tau,x;\tau)=0,\quad \partial_{t}V_{\varepsilon}(\tau,x;\tau)=\rho_{\varepsilon}(\tau,x),
        \end{array}
    \end{equation}
    for $(t,x)\in [\tau,T]\times\mathbb{R}^d$ and $\tau \in [0,T]$. We take the $L^2$-norm in \eqref{Duhamel2 solution} and we argue as in the proof of Theorem \ref{Thm1 uniqueness}. We obtain
    \begin{equation}\label{Duhamel2 solution estimate}
        \Vert U_{\varepsilon}(t,\cdot)\Vert_{L^2} \leq \Vert W_{\varepsilon}(t,\cdot)\Vert_{L^2} + \int_{0}^{t}\Vert V_{\varepsilon}(t,\cdot;\tau)\Vert_{L^2}d\tau,
    \end{equation}
    where
    \begin{equation*}
        \Vert W_{\varepsilon}(t,\cdot)\Vert_{L^2} \lesssim \Big(2+ \Vert m_{\varepsilon}\Vert_{L^{\infty}}\Big) \Big(1+ \Vert b_{\varepsilon} \Vert_{L^{\infty}} +  \Vert m_{\varepsilon} \Vert_{L^{\infty}}^{\frac{1}{2}}\Big)\bigg[\Vert u_{0,\varepsilon}-u_{0}\Vert_{H^s} + \Vert u_{1,\varepsilon}-u_{1,}\Vert_{L^2}\bigg],
    \end{equation*}
    and
    \begin{equation*}
        \Vert V_{\varepsilon}(t,\cdot;\tau)\Vert_{L^2} \lesssim \Big(2+ \Vert m_{\varepsilon}\Vert_{L^{\infty}}\Big) \Big(1+ \Vert b_{\varepsilon} \Vert_{L^{\infty}} +  \Vert m_{\varepsilon} \Vert_{L^{\infty}}^{\frac{1}{2}}\Big)\bigg[\Vert \rho_{\varepsilon}(\tau,\cdot)\Vert_{L^2}\bigg],
    \end{equation*}
    by the energy estimate \eqref{Energy estimate g1} from Lemma \ref{Lemma1}, and $\rho_{\varepsilon}$ is estimated by
    \begin{equation}\label{Estimate Theta_epsilon}
        \Vert \rho_{\varepsilon}(\tau,\cdot)\Vert_{L^2} \leq \Vert f_{\varepsilon}(\tau ,\cdot )- f(\tau ,\cdot )\Vert_{L^2} +\Vert m_{\varepsilon} - m\Vert_{L^{\infty}}\Vert u(t,\cdot)\Vert_{L^2} + \Vert b_{\varepsilon} - b\Vert_{L^{\infty}}\Vert \partial_{t}u(t,\cdot)\Vert_{L^2}.
    \end{equation}
    First, one observes that $\Vert m_{\varepsilon}\Vert_{L^{\infty}}<\infty$ and $\Vert b_{\varepsilon}\Vert_{L^{\infty}}<\infty$ by the fact that $m,b\in L^{\infty}(\mathbb{R}^d)$ and  $\Vert u(t,\cdot)\Vert_{L^2}$ and $\Vert \partial_{t}u(t,\cdot)\Vert_{L^2}$ are bounded as well, since $u$ is a classical solution to \eqref{Equation coherence}. This, together with
    \begin{equation*}
        \lVert m_{\varepsilon} - m\rVert_{L^{\infty}} \rightarrow 0,\quad \text{and}\quad \lVert b_{\varepsilon} - b\rVert_{L^{\infty}} \rightarrow 0,\quad \text{as } \varepsilon\rightarrow 0,
    \end{equation*}
    from the assumptions, and
    \begin{equation*}
        \lVert f_{\varepsilon}(t,\cdot) - f(t,\cdot)\rVert_{L^{2}} \rightarrow 0,\quad\text{and}\quad \Vert u_{0,\varepsilon}-u_{0}\Vert_{H^s} \rightarrow 0,\quad  \Vert u_{1,\varepsilon}-u_{1,}\Vert_{L^2} \rightarrow 0,\quad \text{as } \varepsilon\rightarrow 0,
    \end{equation*}
    shows that
    \begin{equation*}
        \Vert U_{\varepsilon}(t,\cdot)\Vert_{L^2} \rightarrow 0,\quad \text{as } \varepsilon\rightarrow 0,
    \end{equation*}
    uniformly in $t\in [0,T]$. This finishes the proof.    
\end{proof}
In the case when a classical solution exists in the sense of Theorem \ref{thm non-homogeneous 1} with $d>2s$, the coherence theorem reads as follows.

\begin{thm}\label{Thm2 coherence}
    Let $\psi$ be a Friedrichs-mollifier. Assume $(m,b)\in (L^{\frac{d}{s}}(\mathbb{R}^d)\cap L^{\frac{d}{2s}}(\mathbb{R}^d))\times L^{\frac{d}{s}}(\mathbb{R}^d)$ be non-negative, $f\in L^2(\mathbb R^d)$, and suppose that $u_0 \in H^{2s}(\mathbb{R}^d)$ and $u_1 \in H^{s}(\mathbb{R}^d)$. Then, for any regularising families $(m_{\varepsilon})_{\varepsilon}=(m\ast\psi_{\varepsilon})_{\varepsilon}$, $(b_{\varepsilon})_{\varepsilon}=(b\ast\psi_{\varepsilon})_{\varepsilon}$ and $(f_{\varepsilon})_{\varepsilon}=(f\ast\psi_{\varepsilon})_{\varepsilon}$ for the equation coefficients, and any regularising families $(u_{0,\varepsilon})_{\varepsilon}=(u_{0}\ast\psi_{\varepsilon})_{\varepsilon}$ and $(u_{1,\varepsilon})_{\varepsilon}=(u_{1}\ast\psi_{\varepsilon})_{\varepsilon}$ for the initial data, the net $(u_{\varepsilon})_{\varepsilon}$ converges to the classical solution of the Cauchy problem \eqref{Equation coherence} in $L^{2}$ as $\varepsilon \rightarrow 0$.
\end{thm}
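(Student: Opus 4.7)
The plan is to mirror the proof of Theorem \ref{Thm1 coherence}, replacing the $L^\infty$-based estimates with Hölder-type estimates adapted to the $d>2s$ framework used in Theorems \ref{Thm2 existence1} and \ref{Thm2 uniqueness}. First I would set $U_{\varepsilon}(t,x) := u_{\varepsilon}(t,x) - u(t,x)$, where $u$ is the classical solution provided by Theorem \ref{thm non-homogeneous 1} and $(u_{\varepsilon})_{\varepsilon}$ is the very weak solution produced by Theorem \ref{Thm2 existence1}. Subtracting the two Cauchy problems, $U_{\varepsilon}$ solves
\begin{equation*}
\partial_t^2 U_\varepsilon(t,x) + (-\Delta)^s U_\varepsilon(t,x) + m_\varepsilon(x) U_\varepsilon(t,x) + b_\varepsilon(x) \partial_t U_\varepsilon(t,x) = \rho_\varepsilon(t,x),
\end{equation*}
with initial data $(u_{0,\varepsilon} - u_0,\, u_{1,\varepsilon} - u_1)$ and source
\begin{equation*}
\rho_\varepsilon(t,x) := (f_\varepsilon - f)(t,x) - (m_\varepsilon - m)(x)\, u(t,x) - (b_\varepsilon - b)(x)\, \partial_t u(t,x).
\end{equation*}

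Next, I would apply Duhamel's principle (Theorem \ref{Thm Duhamel}) to write $U_\varepsilon = W_\varepsilon + \int_0^t V_\varepsilon(t,\cdot;\tau)\,d\tau$, where $W_\varepsilon$ solves the homogeneous problem with the initial-data differences, and $V_\varepsilon(\cdot;\tau)$ solves the auxiliary homogeneous problem with $V_\varepsilon(\tau,\cdot;\tau) = 0$ and $\partial_t V_\varepsilon(\tau,\cdot;\tau) = \rho_\varepsilon(\tau,\cdot)$. Taking $L^2$ norms, applying Minkowski's integral inequality, and invoking the energy estimate \eqref{Energy estimate1} from Lemma \ref{Lemma1} yield
\begin{equation*}
\Vert U_\varepsilon(t,\cdot)\Vert_{L^2} \lesssim K_\varepsilon \Big[\Vert u_{0,\varepsilon} - u_0\Vert_{H^{2s}} + \Vert u_{1,\varepsilon} - u_1\Vert_{H^s} + \int_0^T \Vert \rho_\varepsilon(\tau,\cdot)\Vert_{L^2}\, d\tau\Big],
\end{equation*}
with $K_\varepsilon := (1+\Vert m_\varepsilon\Vert_{L^{d/s}})(1+\Vert m_\varepsilon\Vert_{L^{d/2s}})(1+\Vert b_\varepsilon\Vert_{L^{d/s}})^2$.

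To estimate $\Vert \rho_\varepsilon(\tau,\cdot)\Vert_{L^2}$, I would apply Hölder's inequality with the pair $p=d/s$, $q=2d/(d-2s)$, as in Theorem \ref{Thm2 uniqueness}, followed by the fractional Sobolev embedding (Proposition \ref{Prop. Sobolev estimate}) $\Vert\cdot\Vert_{L^q}\lesssim \Vert(-\Delta)^{s/2}\cdot\Vert_{L^2}\leq\Vert\cdot\Vert_{H^s}$, to obtain
\begin{equation*}
\Vert \rho_\varepsilon(\tau,\cdot)\Vert_{L^2} \lesssim \Vert f_\varepsilon(\tau,\cdot) - f(\tau,\cdot)\Vert_{L^2} + \Vert m_\varepsilon - m\Vert_{L^{d/s}}\Vert u(\tau,\cdot)\Vert_{H^s} + \Vert b_\varepsilon - b\Vert_{L^{d/s}}\Vert \partial_t u(\tau,\cdot)\Vert_{H^s}.
\end{equation*}
Since $u\in C([0,T];H^{2s})\cap C^1([0,T];H^s)$ is the classical solution, the factors $\Vert u(\tau,\cdot)\Vert_{H^s}$ and $\Vert\partial_t u(\tau,\cdot)\Vert_{H^s}$ are uniformly bounded in $\tau\in[0,T]$.

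To conclude, I would rely on the standard properties of mollification: since all exponents involved are finite, $\Vert m_\varepsilon - m\Vert_{L^{d/s}}\to 0$, $\Vert b_\varepsilon - b\Vert_{L^{d/s}}\to 0$, $\Vert f_\varepsilon(\tau,\cdot) - f(\tau,\cdot)\Vert_{L^2}\to 0$, $\Vert u_{0,\varepsilon}-u_0\Vert_{H^{2s}}\to 0$ and $\Vert u_{1,\varepsilon}-u_1\Vert_{H^s}\to 0$ automatically as $\varepsilon\to 0$, with no additional hypothesis required (contrary to Theorem \ref{Thm1 coherence}). Young's convolution inequality ensures $K_\varepsilon$ stays uniformly bounded in $\varepsilon$. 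Dominated convergence applied to the uniformly controlled integrand then gives $\int_0^T\Vert\rho_\varepsilon(\tau,\cdot)\Vert_{L^2}\,d\tau\to 0$, whence $\Vert U_\varepsilon(t,\cdot)\Vert_{L^2}\to 0$ uniformly on $[0,T]$. The only delicate point, and the main reason the hypotheses differ from Theorem \ref{Thm1 coherence}, is precisely that we must work in $L^{d/s}$ and $L^{d/2s}$ (rather than $L^\infty$) in order to exploit the automatic $L^p$-convergence of mollifications; verifying that the Hölder/Sobolev chain correctly transfers the $L^p$-convergence of the coefficients into $L^2$-convergence of the source $\rho_\varepsilon$ is where the argument genuinely departs from the $L^\infty$-case.
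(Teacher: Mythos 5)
Your proposal is correct and follows essentially the same route as the paper: define $U_\varepsilon=u_\varepsilon-u$, apply Duhamel's principle, bound $W_\varepsilon$ and $V_\varepsilon$ via the energy estimate \eqref{Energy estimate1}, and estimate $\rho_\varepsilon$ through H\"older's inequality with $(p,q)=(d/s,\,2d/(d-2s))$ followed by the fractional Sobolev embedding. Your added remark that the $L^{d/s}$-- and $L^{d/2s}$--convergences of the mollified coefficients hold automatically for Friedrichs mollifiers (so no analogue of condition \eqref{approx.condition1} is needed), and that Young's inequality keeps $K_\varepsilon$ uniformly bounded, is a correct and useful elaboration of what the paper leaves implicit.
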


\begin{proof}
 Let $(u_{\varepsilon})_{\varepsilon}$ be the very weak solution given by Theorem \ref{Thm2 existence1} and $u$ the classical one, as in Theorem \ref{thm non-homogeneous 1} with $d>2s$. By using Duhamel's principle and arguing as in the proof of Theorem \ref{Thm2 uniqueness}, we arrive at
    \begin{equation}\label{Duhamel2 solution estimate 1}
        \Vert (u_{\varepsilon}-u)(t,\cdot)\Vert_{L^2} \leq \Vert W_{\varepsilon}(t,\cdot)\Vert_{L^2} + \int_{0}^{t}\Vert V_{\varepsilon}(t,\cdot;\tau)\Vert_{L^2}d\tau,
    \end{equation}
    where
    \begin{align*}
        \Vert W_{\varepsilon}(t,\cdot)\Vert_{L^2} \lesssim \Big(1+\Vert m_{\varepsilon}\Vert_{L^{\frac{d}{s}}}\Big)&\Big(1+\Vert m_{\varepsilon}\Vert_{L^{\frac{d}{2s}}}\Big)\Big(1+\Vert b_{\varepsilon}\Vert_{L^{\frac{d}{s}}}\Big)^2\\ &\quad \times \bigg[\Vert u_{0,\varepsilon}-u_{0}\Vert_{H^{2s}} + \Vert u_{1,\varepsilon}-u_{1,}\Vert_{H^s}\bigg],
    \end{align*}
    and
    \begin{equation*}
        \Vert V_{\varepsilon}(t,\cdot;\tau)\Vert_{L^2} \lesssim \Big(1+\Vert m_{\varepsilon}\Vert_{L^{\frac{d}{s}}}\Big)\Big(1+\Vert m_{\varepsilon}\Vert_{L^{\frac{d}{2s}}}\Big)\Big(1+\Vert b_{\varepsilon}\Vert_{L^{\frac{d}{s}}}\Big)^2\bigg[\Vert \rho_{\varepsilon}(\tau,\cdot)\Vert_{L^2}\bigg],
    \end{equation*}
    by the energy estimate \eqref{Energy estimate1}, from Lemma \ref{Lemma1}, and $\rho_{\varepsilon}$ is estimated by
    \begin{equation}\label{Estimate Theta_epsilon 1}
        \Vert \rho_{\varepsilon}(\tau,\cdot)\Vert_{L^2} \leq \Vert f_{\varepsilon}(\tau ,\cdot )- f(\tau ,\cdot )\Vert_{L^2} +\Vert m_{\varepsilon} - m\Vert_{L^{\frac{d}{s}}}\Vert u(t,\cdot)\Vert_{H^{s}} + \Vert b_{\varepsilon} - b\Vert_{L^{\frac{d}{s}}}\Vert \partial_{t}u(t,\cdot)\Vert_{H^s}.
    \end{equation}
    where the second and third terms on the right hand side are estimated by using H\"older's inequality (see Theorem \ref{Holder inequality}) for $1<p<\infty$, such that $\frac{1}{p} + \frac{1}{q} = \frac{1}{2}$ combined with the fractional Sobolev inequality (Proposition \ref{Prop. Sobolev estimate}).\\
    First, one observes that $\Vert m_{\varepsilon}\Vert_{L^{\frac{d}{s}}}<\infty$, $\Vert m_{\varepsilon}\Vert_{L^{\frac{d}{2s}}}<\infty $, and $\Vert b_{\varepsilon}\Vert_{L^{\frac{d}{s}}}<\infty$ by the fact that $(m,b)\in \big(L^{\frac{d}{s}}(\mathbb{R}^d)\cap L^{\frac{d}{2s}}(\mathbb{R}^d)\big)\times \big(L^{\frac{d}{s}}(\mathbb{R}^d)\big)$ and  $\Vert u(t,\cdot)\Vert_{H^{s}}$ and $\Vert \partial_{t}u(t,\cdot)\Vert_{H^{s}}$ are bounded as well, since $u$ is a classical solution to \eqref{Equation coherence}. This, together with
    \begin{equation*}
        \lVert m_{\varepsilon} - m\rVert_{L^{\frac{d}{s}}} \rightarrow 0,\quad\text{and}\quad \lVert b_{\varepsilon} - b\rVert_{L^{\frac{d}{s}}} \rightarrow 0,\quad\text{as } \varepsilon\rightarrow 0,
    \end{equation*}
    and
    \begin{equation*}
        \lVert f_{\varepsilon}(t,\cdot) - f(t,\cdot)\rVert_{L^{2}} \rightarrow 0,\, \Vert u_{0,\varepsilon}-u_{0}\Vert_{H^{2s}} \rightarrow 0,\,  \Vert u_{1,\varepsilon}-u_{1,}\Vert_{H^s} \rightarrow 0,\, \text{as } \varepsilon\rightarrow 0,
    \end{equation*}
    shows that
    \begin{equation*}
        \Vert (u_{\varepsilon}-u)(t,\cdot)\Vert_{L^2} \rightarrow 0,\quad \text{as } \varepsilon\rightarrow 0,
    \end{equation*}
    uniformly in $t\in [0,T]$. This completes the proof.    
\end{proof}

\section*{Acknowledgement}
The authors would like to thank Prof. Niyaz Tokmagambetov for his valuable comments.


\begin{thebibliography}{99}

\bibitem[ART19]{ART19} A. Altybay, M. Ruzhansky, N. Tokmagambetov. \newblock Wave equation with distributional propagation speed and mass term: Numerical simulations. \newblock {\it Appl. Math. E-Notes}, 19 (2019), 552-562.

\bibitem[ARST21a]{ARST21a} A. Altybay, M. Ruzhansky, M. E. Sebih, N. Tokmagambetov. \newblock Fractional Klein-Gordon equation with singular mass. \newblock {\it Chaos, Solitons and Fractals}, 143 (2021) 110579.

\bibitem[ARST21b]{ARST21b} A. Altybay, M. Ruzhansky, M. E. Sebih, N. Tokmagambetov. \newblock Fractional Schr\"{o}dinger Equations with potentials of higher-order singularities. \newblock {\it Rep. Math. Phys.}, 87 (1) (2021) 129-144.

\bibitem[ARST21c]{ARST21c} A. Altybay, M. Ruzhansky, M. E. Sebih, N. Tokmagambetov. \newblock The heat equation with strongly singular potentials. \newblock {\it Applied Mathematics and Computation}, 399 (2021) 126006.

\bibitem[BLO22]{BLO22} R. Blommaert, S. Lazendić, L. Oparnica. \newblock The Euler-Bernoulli equation with distributional coefficients and forces. \newblock {\it Computers and Mathematics with Applications}, 123 (2022) 171-183.

\bibitem[CDRT23]{CDRT23} M. Chatzakou,  A. Dasgupta, M. Ruzhansky, A. Tushir. \newblock Discrete heat equation with irregular thermal conductivity and tempered distributional data. \newblock {\it Proc. Roy. Soc. of Edinburgh Section A: Mathematics}, 1-24 (2023).

\bibitem[CRT21]{CRT21} M. Chatzakou, M. Ruzhansky, N. Tokmagambetov. \newblock Fractional Klein-Gordon equation with singular mass. II: Hypoelliptic case. \newblock {\it Complex Var. Elliptic Equ.} 67:3, 615-632 (2021).

\bibitem[CRT22a]{CRT22a} M. Chatzakou, M. Ruzhansky, N. Tokmagambetov. \newblock The heat equation with singular potentials. II: Hypoelliptic case. \newblock {\it Acta Appl. Math.}, (2022), 179:2.

\bibitem[CRT22b]{CRT22b} M. Chatzakou, M. Ruzhansky, N. Tokmagambetov. \newblock Fractional Schrödinger equations with singular potentials of higher order. II: Hypoelliptic case. \newblock {\it Rep. Math. Phys.}, 89 (2022), 59-79.

\bibitem[CT04]{CT04} A. Cotsiolis, N. K. Tavoularis. \newblock Best constants for Sobolev inequalities for higher order fractional derivatives. \newblock {\it J. Math. Anal. Appl.}, 295 (2004) 225–236.

\bibitem[DGL23]{DGL23} M. Discacciati, C. Garetto, C. Loizou. \newblock On the Wave Equation with Space Dependent Coefficients: Singularities and Lower Order Terms. \newblock {\it Acta Applicandae Mathematicae}, 187,10 (2003).

\bibitem[DPV12]{DPV12} E. Di Nezza, G. Palatucci and E. E. Valdinoci. \newblock Hitchhiker's guide to the fractional Sobolev spaces. \newblock {\it Bulletin des Sciences Mathematiques}, 136(5): 521-573, 2012.

\bibitem[ER18]{ER18} M. R. Ebert, M. Reissig. \newblock Methods for Partial Differential Equations. \newblock {\it Birkhäuser}, 2018.

\bibitem[Gar18]{Gar18} N. Garofalo. \newblock Fractional thoughts. \newblock {\it Preprint}, arXiv:1712.03347v4 (2018).

\bibitem[Gar21]{Gar21} C. Garetto. \newblock On the wave equation with multiplicities and space-dependent irregular coefficients.
\newblock {\it Trans. Amer. Math. Soc.}, 374 (2021), 3131-3176.

\bibitem[GLO21]{GLO21} S. Gordić, T, Levajković, L. Oparnica. \newblock Stochastic parabolic equations with singular potentials.
\newblock {\it Chaos, Solitons and Fractals}, 151 (2021), 111245.

\bibitem[GO14]{GO14} L. Grafakos, S. Oh. \newblock The Kato-Ponce inequality.
\newblock{\it Communications in Partial Differential Equations}, 39 (2014), 1128-1157.

\bibitem[GR15]{GR15} C. Garetto, M. Ruzhansky. \newblock Hyperbolic second order equations with non-regular time dependent coefficients.
\newblock {\it Arch. Rational Mech. Anal.}, 217 (2015), no. 1, 113--154.

\bibitem[Kwa17]{Kwa17} M. Kwaśnicki. \newblock Ten equivalent definitions of the fractional laplace operator. \newblock {\it Fractional Calculus and Applied Analysis}, 20(1): 7-51, 2017.

\bibitem[MRT19]{MRT19} J. C. Munoz, M. Ruzhansky and N. Tokmagambetov. \newblock Wave propagation with irregular dissipation and applications to acoustic problems and shallow water. \newblock {\it Journal de Math\'ematiques Pures et Appliqu\'ees}. 123 (2019), 127-147.

\bibitem[RST24]{RST24} M. Ruzhansky, M. E. Sebih, N. Tokmagambetov. \newblock Fractional wave equation with irregular mass and dissipation. \newblock {\it Z. Angew. Math. Phys.}, 75,184 (2024).

\bibitem[RSY22]{RSY22} M. Ruzhansky, S. Shaimardan, A. Yeskermessuly. \newblock Wave equation for Sturm-Liouville operator with singular potentials. \newblock {\it J. Math. Anal. Appl.}, Volume 531, Issue 1, Part 2, 2024,
127783.

\bibitem[RT17a]{RT17a} M. Ruzhansky, N. Tokmagambetov. \newblock Very weak solutions of wave equation for Landau Hamiltonian with irregular electromagnetic field. \newblock {\it Lett. Math. Phys.}, 107 (2017) 591-618.

\bibitem[RT17b]{RT17b} M. Ruzhansky, N. Tokmagambetov. \newblock Wave equation for operators with discrete spectrum and irregular propagation speed. \newblock {\it Arch. Rational Mech. Anal.}, 226 (3) (2017) 1161-1207.

\bibitem[RY22]{RY22} M. Ruzhansky, A. Yeskermessuly. \newblock Wave equation for Sturm-Liouville operator with singular intermediate coefficient and potential. \newblock {\it Bull. Malays. Math. Sci. Soc.}, 46, 195 (2023).

\bibitem[Sch54]{Sch54} L. Schwartz. \newblock Sur l’impossibilité de la multiplication des distributions. \newblock {\it C. R. Acad. Sci. Paris}, 239 (1954) 847–848.

\bibitem[SW22]{SW22} M. E. Sebih, J. Wirth. \newblock On a wave equation with singular dissipation. \newblock {\it Math. Nachr.}, 295 (2022): 1591–1616.

\end{thebibliography}
\end{document}